 \def\newblock{\ }%
\def\transp{^{\rm T}}
\newcommand{\R}{\mathbb{R}}
\newcommand{\bdry}{\operatorname{bdry}}
\newcommand{\inter}{\operatorname{int}}
\newcommand{\ip}[2]{\left\langle #1 , #2 \right\rangle}    % inner product
\newcommand{\prox}{\operatorname{prox}}
\newcommand{\barw}{\overline{w}}
\newcommand{\barW}{\overline{W}}
\newcommand{\barX}{\overline{X}}
\newcommand{\barXY}{\overline{XY}}
\newcommand{\barU}{\overline{U}}
\newcommand{\barZ}{\overline{Z}}
\newcommand{\dist}{\operatorname{dist}}
\renewcommand{\S}{\mathcal S}
\newcommand{\relint}{\operatorname{relint}}
\newcommand{\supp}{\text{\rm supp}}
\newcommand{\graph}{\text{\rm{graph}}}
\newcommand{\cl}{\text{cl}}
\newcommand{\matr}[1]{\begin{bmatrix} #1 \end{bmatrix}}    % matrix
\newcommand{\dom}{\operatorname{dom}}
\newcommand{\argmin}{\operatorname{argmin}}
\newcommand{\argmax}{\operatorname{argmax}}
\newcommand{\dmin}{\displaystyle \min}
\newcommand{\dmax}{\displaystyle \max}
\newtheorem{proposition}{Proposition}
\newtheorem{lemma}{Lemma}
\newtheorem{theorem}{Theorem}
\newtheorem{example}{Example}
\title{Linear convergence of the Douglas-Rachford algorithm via a generic error bound condition}
\author{
Javier Pe\~na\thanks{\scriptsize Tepper School of Business, Carnegie Mellon University, USA, {\tt jfp@andrew.cmu.edu}} 
\and Juan C. Vera\thanks{\scriptsize Tilburg School of Economics and Management, Tilburg University, The Netherlands, {\tt j.c.veralizcano@uvt.nl}}
\and Luis F. Zuluaga\thanks{\scriptsize Department of Industrial and Systems Engineering, Lehigh University, USA, {\tt
luis.zuluaga@lehigh.edu}}}
\begin{document}
%%%%%%%%%%%%%%%%

\maketitle

\begin{abstract}%
We provide new insight into the convergence properties of the Douglas-Rachford algorithm for the problem
$\min_x \{f(x)+g(x)\}$,
where $f$ and $g$ are convex functions. Our approach relies on and highlights the natural primal-dual symmetry between the above problem and its Fenchel dual $\min_{u} \{ f^*(u) + g_*(u)\}$ where $g_*(u):=g^*(-u)$.  Our main development is to show the linear convergence of the algorithm when a natural error bound condition on the Douglas-Rachford operator holds.  We leverage our error bound condition approach to show and estimate the algorithm's linear rate of convergence for three special classes of problems.  The first one is when~$f$ or~$g$  {\em and} ~$f^*$ or~$g_*$ are strongly convex relative to the primal and dual optimal sets respectively.  The second one is when~$f$ and~$g$ are piecewise linear-quadratic functions.  The third one is when~$f$ and~$g$ are the indicator functions of closed convex cones.  In all three cases the rate of convergence is determined by a suitable  measure of well-posedness of the problem.
In the conic case, if the two closed convex cones are a linear subspace $L$ and $\R^n_+$, we establish the following
stronger {\em finite termination} result: the Douglas-Rachford algorithm identifies the {\em maximum support sets} for $L\cap\R^n_+$ and $L^\perp\cap\R^n_+$ in finitely many steps.
Our developments have straightforward extensions to the more general linearly constrained problem $\min_{x,y} \{f(x) + g( y):Ax + By = b\}$ thereby highlighting a direct and straightforward relationship between the Douglas-Rachford algorithm and the alternating direction method of multipliers (ADMM).
\end{abstract}%

{\small {\noindent \bf Keywords: }
Douglas-Rachford algorithm; ADMM algorithm; Hoffman constants; linear rate of convergence; Fenchel duality}

\maketitle
%%%%%%%%%%%%%%%%%%%%%%%%%%%%%%%%%%%%%%%%%%%%%%%%%%%%%%%%%%%%%%%%%%%%%%

\section{Introduction.}
We provide new insight into the convergence properties of the {\em Douglas-Rachford} algorithm for the problem
\begin{equation}
\label{eq.primal0}
\min_x \{f(x)+g(x)\}.
\end{equation}
where $f$ and $g$ are convex functions.

The Douglas-Rachford algorithm is based on a fixed point iteration of a canonical mapping associated to~\eqref{eq.primal0} that we refer to as the {\em Douglas-Rachford operator} (see~\eqref{eq:DRoperator}). Our main development (Theorem~\ref{thm.DR}) is to show the linear convergence of the Douglas-Rachford algorithm when the Douglas-Rachford operator satisfies a natural error bound condition (see~\eqref{eq.error.bound.F}). Our results rely on and highlight the following primal-dual symmetry (see Propositions~\ref{prop.DR.facts} and~\ref{prop.DR.symm}): the Douglas-Rachford operator for~\eqref{eq.primal0} is the same as the Douglas-Rachford operator for its {\em Fenchel dual}~\citep[see, e.g.,][]{rockafellar1970convex}
\begin{equation}
\label{eq.dual0}
\max_{u} \{- f^*(u) - g^*(-u)\} \Leftrightarrow
\min_{u}  \{f^*(u) + g_*(u)\}.
\end{equation}
For notational convenience in~\eqref{eq.dual0} and throughout the paper  we write $g_*$ to denote the function $u \mapsto g^*(-u)$. 
In particular, the Douglas-Rachford algorithm implicitly solves both~\eqref{eq.primal0} and~\eqref{eq.dual0} and generates identical iterates if we swap the primal and dual roles of these problems (Proposition~\ref{prop.DR.symm}).

The Douglas-Rachford algorithm was introduced in~\citep{douglas1956numerical} to solve heat equations. It was
initially proposed and studied in~\citep{lions1979splitting} as a {\em splitting} algorithm to find the zero sum of two maximally monotone operators. The seminal work in~\citep{lions1979splitting} has led to a very rich and widespread related literature~\citep[see, e.g.,][just to name a few]{parikh2014proximal, boyd2011distributed, eckstein1992douglas, eckstein1989splitting}. In particular,
the linear convergence of the Douglas-Rachford algorithm was first studied in~\citep{lions1979splitting}, and since then, linear convergence of the algorithm has been proved in different settings~\citep[][Sec. I]{giselsson2016linear}.
Our approach to the linear convergence of the Douglas-Rachford is inspired by linear convergence results for the alternating direction methods of multipliers (ADMM) algorithm, which is known~\citep[see, e.g.,][]{boyd2011distributed} to be equivalent to applying the Douglas-Rachford algorithm to~\eqref{eq.dual0}. In particular, consider the work of \citep[][Lem. 5]{wang2017new} that proves that the linear rate of convergence of the ADMM algorithm for  feasible linear optimization problems can be obtained in terms of an error bound on the distance to the problem's optimality set~\citep[][Lem. 2 and 3]{wang2017new}; and the work of \citep{hong2017linear} who characterize the cases in which an error bound condition~\citep[][Lem. 2.3]{hong2017linear} is satisfied to show the linear convergence of the ADMM algorithm in the cases when the combined function $f^* + g_*$ is not necessarily strongly convex.

In contrast with these previous results, our error bound condition~\eqref{eq.error.bound.F} directly concerns the set of fixed points of the Douglas-Rachford operator (see~\eqref{eq:DRoperator}). The set of fixed points reflects the primal-dual symmetry of the the Douglas-Rachford operator, as it is the sum of the primal and dual optimal sets (Proposition~\ref{prop.DR.facts}). Therefore our error bound incorporates the distance to the solution sets of both problem~\eqref{eq.primal0} and its dual~\eqref{eq.dual0}, which allows us to take full advantage of the algorithm’s symmetry properties. This perspective  enables us to derive sufficient conditions for linear convergence and also a mechanism to estimate the linear rate of convergence in terms of different joint conditioning properties of the  functions $f$ and $g$.

We leverage the error bound condition~\eqref{eq.error.bound.F} to show and estimate the linear rate of convergence of the Douglas-Rachford algorithm for three special classes of problems.  The first class is when one of $f$ or $g$ is strongly convex and also one of $f^*$ or $g_*$ is strongly convex (Theorem~\ref{thm.stronglyConv.bound}).  This result  is inspired by and extends the result in~\citep[][Thm. 1]{giselsson2016linear} that  only considers the case when $f$ is both strongly convex and smooth, that is, when $f$ and $f^*$ are both strongly convex. Furthermore, the strong convexity assumption can be relaxed to a context that is {\em relative} to the optimal solution sets.  In this case the rate of convergence of the Douglas-Rachford algorithm is determined by two measures of well-posedness; namely, the {\em relative strong convexity} constants of $f$ or $g$ and of $f^*$ or $g_*$ (see~\eqref{eq.rel.strong.conv} and Theorem~\ref{thm.stronglyConv.bound}).

The second class in when both $f$ and $g$ are piecewise linear-quadratic (Theorem~\ref{thm:HpieceWiseLin}).  In this case,
the rate of convergence of the algorithm is determined by the maximum of a finite collection of {\em relative Hoffman constants}~\citep{pena2020new}.
This case covers a number of interesting problem classes including linear and quadratic programming, support-vector machines, and ridge and LASSO regression.

The third class is when $f$ and $g$ are the indicator functions of closed convex cones $C\subseteq\R^n$ and $K\subseteq\R^n$ respectively.  In this case the problems~\eqref{eq.primal0} and~\eqref{eq.dual0} correspond to the conic feasibility problem
\[
\text{ find } x \in C\cap K
\]
and its alternative
\[
\text{ find } u \in C^\circ\cap K^*,
\]
where $(\cdot)^\circ$ and $(\cdot)^*$ respectively denote the associated polar and dual cones. We show that in this case the convergence properties of the Douglas-Rachford algorithm are determined by some {\em relative subtransversality} constants of the above two feasibility problems (Theorem~\ref{prop.error.gral}).  In the particular case when $C = L$ is a linear subspace and $K=\R^n_+$, we establish the following stronger {\em finite termination} properties (Theorem~\ref{thm.finite.feas}). If either $L\cap \R^n_{++}\ne \emptyset$ or $L^\perp\cap \R^n_{++}\ne \emptyset$, then the Douglas-Rachford algorithm terminates after finitely many steps.  More generally, for any subspace $L\subseteq \R^n$ the algorithm identifies the maximum support sets for both $L\cap \R^n_+$ and $L^\perp\cap \R^n_+$ after finitely many steps.  The latter result provides a partial answer to the open question posed by Dadush, V\'egh, and Zambelli~\citep{dadush2020rescaling} concerning maximum support sets for the feasibility problems $L\cap \R^n_+$ and $L^\perp\cap \R^n_+$.  Our finite termination result (Theorem~\ref{thm.finite.feas}) is inspired by and related to the pioneering work of Spingarn~\cite{spingarn1985primal} and the more recent work of Bauschke et al.~\cite{bauschke2016slater}.  Indeed,~\citep[][Thm. 3.7]{bauschke2016slater} establishes the finite convergence of the  Douglas-Rachford algorithm in the case when the function $f$ is the indicator function of an affine subspace $A$, the function $g$ is the indicator function of a closed convex set $B$ that is polyhedral at every point in $A \cap \bdry{B}$, and the {\em Slater} condition $A \cap \inter{B} \neq \emptyset$ is satisfied.  The latter work in turn can be seen as a refinement of the earlier result in~\citep[][Thm. 2]{spingarn1985primal} that shows the finite convergence of the method of partial inverses for solving a system of linear inequalities provided the system is strictly feasible.  In contrast to~\cite{bauschke2016slater,spingarn1985primal}, our approach is specialized to the problems $L\cap \R^n_+$ and $L^\perp\cap \R^n_+$.  This in turn yields a finite termination result in full generality that does not require a Slater condition.  Furthermore, although similar in spirit, our proof of finite termination is substantially more succinct than the proofs of finite termination in~\cite{bauschke2016slater,spingarn1985primal}. We also extend these results to the more general case in which $L$ is an affine subspace (Proposition~\ref{prop.fixed.affine}).

All of our developments have a straightforward extension  (Proposition~\ref{prop.DR.gral}) to the more general, linearly constrained problem
\begin{align*}
\min_{x,y} \; & f(x) + g( y) \\
& Ax + By = b,
\end{align*}
after noting that the latter can be reformulated as $\min_z \{\tilde f(z) + \tilde g(z)\}$ for the functions $\tilde f(z) = \min_x \{f(x):Ax=z\}$ and $\tilde g(z):=\min_y \{g(y):b-By=z\}$.  We show that this extended version of the Douglas-Rachford algorithm is indeed {\em equivalent} to  the alternating direction method of multipliers (ADMM) (Proposition~\ref{prop.DR.ADMM.equiv}).  The latter fact provides an interesting complement to the well-known equivalence between the Douglas-Rachford algorithm applied to the dual problem~\eqref{eq.dual0} and the ADMM algorithm applied to the primal problem~\eqref{eq.primal0}~\citep[see, e.g.,][]{boyd2011distributed}.

The rest of the article is organized as follows. In Section~\ref{sec:prelim}, we introduce most of the notation and terminology used throughout the article, well-known facts related to optimality conditions of the primal-dual convex minimization problems~\eqref{eq.primal0} and~\eqref{eq.dual0}, as well as the Douglas-Rachford operator and the Douglas-Rachford algorithm for problems~\eqref{eq.primal0} and~\eqref{eq.dual0}.
Section~\ref{sec:DRconv} presents our main development, namely error bound condition~\eqref{eq.error.bound.F}.  This condition
ensures the linear convergence of the Douglas-Rachford algorithm, and
can be used to estimate its rate of convergence in straightforward fashion (Theorem~\ref{thm.DR}). We then apply this result to obtain and estimate the linear rate of convergence of the Douglas-Rachford algorithm for two special cases.  The first one is when one of the functions $f$ or $g$ is strongly convex relative to the primal optimal set and also $f^*$ or $g_*$ is strongly convex relative to the dual optimal set (see Section~\ref{sec:smooth}).  The second one is when $f$ and $g$, or equivalently $f^*$ and $g^*$, are piecewise linear-quadratic functions (see Section~\ref{sec:PQL}). In Section~\ref{sec.intersections}, we consider a third special case; namely,
when $f$ and $g$ are the indicator functions of convex cones.  In this case the problems~\eqref{eq.primal0}  corresponds to the feasibility problem of finding a point in the intersection of the two convex cones
and~\eqref{eq.dual0} corresponds to the alternative feasibility problem defined by the polar and dual cones. In this case, besides estimating the rate of linear convergence of the Douglas-Rachford algorithm, we establish  stronger finite convergence properties of the algorithm. In Section~\ref{sec:genDR}, we discuss
how our results extend in straightforward fashion to the case in which  problem~\eqref{eq.primal0} has additional linear constraints. We finish in Section~\ref{sec:conclusions} with some conclusions and directions of future work.

\section{Preliminaries.}
\label{sec:prelim}
Throughout the article, we use standard notation and results from~\citep{boyd2004convex, rockafellar2009variational, rockafellar1970convex}. Let $\R$ denote the real numbers and $\bar \R = \R \cup \{+\infty\}$ denote the extended real numbers. We denote by $\ip \cdot\cdot$ the standard inner product on $\R^n$ and by $\|\cdot\|$ the corresponding standard (Euclidean) norm.
Also, we will denote by $\Gamma_0 := \Gamma_0(\R^n)$ the class of closed, proper, convex functions $f: \R^n \to \bar \R$. In particular,  $\dom(f):= \{x \in \R^n:f(x) \neq +\infty\}$ is non-empty for all $f\in\Gamma_0$.
Given $f \in \Gamma_0$, the convex conjugate of $f$ is the function $f^* \in \Gamma_0$ defined by $f^*(u) = \sup_x\{\ip ux - f(x)\}$.  For notational convenience, we will sometimes write $f_*(u)$ for $f^*(-u)$.
The subdifferential of $f\in \Gamma_0$ is the set-valued mapping $\partial f:\R^n\rightrightarrows \R^n$
defined  by
\[\partial f (x) = \{u \in \R^n: f(y)-f(x) \ge \ip u{y-x} \text{ for all }y \in \R^n\}.
\]
The construction of the convex conjugate readily implies the following {\em Fenchel-Young inequality} for all $x,u\in \R^n$
\begin{equation}\label{eq.fenchel-young}
 f^*(u) +f(x) \ge \ip{u}{x}.
\end{equation}
Furthermore, equality holds in~\eqref{eq.fenchel-young} if and only if $u\in \partial f(x)$.

Let $\graph(\partial f):=\{(x,u): u \in \partial f(x)\}$.
For all $f \in \Gamma_0$, the subdifferential of $f$ is monotone, that is,
\begin{equation}\label{eq:diffMon}
\ip{v-u}{y-x} \ge 0 \text{ for all } (x,u), (y,v) \in \graph(\partial f).
\end{equation}
The monotonicity of the subdifferential implies Fermat's rule,
\[\argmin_{x \in \R^n} f(x) = \{x \in \R^n: \partial f(x) = 0\}.\]
Further, the subdifferentials of a closed convex function and its conjugate are related by the relationship
\begin{equation}
\label{eq.subdiff}
u \in \partial f(x) \text{ it and only if } x \in \partial f^*(u).
\end{equation}
The proximal mapping of $f$, $\prox_f:\R^n\rightarrow \R^n$, is defined by
\[
\prox_f(x):=\argmin_{y \in \R^n} \left\{f(y) + \frac{1}{2} \|y-x\|^2\right\}.
\]
If $f \in \Gamma_0$ then the proximal mapping is defined everywhere and the following {\em Moreau decomposition} holds for any $x\in \R^n$
\begin{equation}\label{eq.moreau}
\prox_f(x) + \prox_{f^*}(x) = x.
\end{equation}
 The proximal mapping and the subdifferential are closely linked via the relationship
\begin{equation}\label{eq:diff->prox}
u \in \partial f(x) \text{ if and only if } x = \prox_f(x+u),
\end{equation}
which is equivalent to
\begin{equation}\label{eq:prox->diff}
s = \prox_f(x)  \text{ if and only if }  x-s\in \partial f(s).
\end{equation}
Let $x, \hat x \in \R^n$ be given. From~\eqref{eq:prox->diff} and the monotonicity of the subdifferential~\eqref{eq:diffMon} we obtain
$
\ip{x-\prox_f(x)-(\hat x-\prox_f(\hat x))}{\prox_f(x)-\prox_f(\hat x)}\ge 0
$
which in turns implies,
\begin{equation}\label{eq:proxFirmNE}
\ip{\prox_f(x) - \prox_f(\hat x)}{x - \hat x}  \ge  \|\prox_f(x) - \prox_f(\hat x)\|^2.
\end{equation}
That is, the proximal mapping $\prox_f$ is firmly non-expansive.
Applying Cauchy-Schwarz inequality, from~\eqref{eq:proxFirmNE} we obtain
\begin{equation}\label{eq:proxLip}
\|\prox_f(x) - \prox_f(\hat x)\| \le \|x - \hat x\|.
\end{equation}
That is, the proximal mapping $\prox_f$ is also Lipschitz continuous with parameter one.

With these preliminary facts at hand, we can now present the well-known Douglas-Rachford algorithm~\citep{douglas1956numerical,lions1979splitting}. Let $f,g \in \Gamma_0$. Consider the problem
\begin{equation}\label{eq.primal}
\dmin_x \{f(x) + g(x)\} \Leftrightarrow \begin{array}{rl}
\dmin_{x,y}  & f(x) + g(y) \\
& x - y = 0
\end{array}
\end{equation}
and its Fenchel dual~\citep[see, e.g.,][]{rockafellar1970convex}
\begin{equation}\label{eq.dual}
\dmax_u \{-f^*(u) - g^*(-u)\} \Leftrightarrow \dmin_u \{f^*(u) + g_*(u)\} \Leftrightarrow \begin{array}{rl}
\dmin_{u,v}  & f^*(u) + g_*(v) \\
& u - v = 0.
\end{array}
\end{equation}
Define the {\em one-step Douglas-Rachford operator} $F:\R^n\rightarrow \R^n$ as follows
\begin{equation}
\label{eq:DRoperator}
F(w) = w + \prox_g(2\prox_f(w)-w) - \prox_f(w).
\end{equation}
Observe that $F$ in~\eqref{eq:DRoperator} is defined everywhere since $f,g \in \Gamma_0$. The following proposition states some fundamental properties of the Douglas-Rachford operator.

\begin{proposition}\label{prop.DR.facts}  Let $f,g \in \Gamma_0$.  Consider the problems~\eqref{eq.primal},~\eqref{eq.dual}, and the  Douglas-Rachford operator $F$ defined via~\eqref{eq:DRoperator}. The optimal values of~\eqref{eq.primal} and~\eqref{eq.dual} are the same and they are both attained if and only if the set of fixed points $\barW = \{w \in \R^n: w = F(w)\}$ of $F$ is nonempty.  When this is the case, the following correspondences between the optimal solution sets $\barX := \argmin_x\{f(x)+g(x)\}, \; \barU:=\argmax_u\{-f^*(u)-g^*(-u)\},$ and the fixed point set $\barW$ of $F$ hold. On the one hand,
\begin{equation}\label{eq.opt.projections}
\barX = \{\prox_f(\bar w): \bar w\in \barW\}
\text{ and } \barU = \{\prox_{f^*}(\bar w): \bar w\in \barW\}.
\end{equation}
On the other hand,
\begin{equation}\label{eq.fixed.all}
\barW = \barX + \barU.
\end{equation}
\end{proposition}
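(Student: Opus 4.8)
The plan is to reduce the entire statement to one observation: a point $w$ is a fixed point of $F$ precisely when it splits as $w=x+u$ with $u\in\partial f(x)$ and $-u\in\partial g(x)$, and then to recognize these two subdifferential inclusions as the optimality conditions for~\eqref{eq.primal} and~\eqref{eq.dual} supplied by the equality case of Fenchel--Young. To establish the observation, I would unwind the equation $F(w)=w$. Setting $x:=\prox_f(w)$ and $u:=w-x$, relation~\eqref{eq:prox->diff} shows $u\in\partial f(x)$ for \emph{every} $w\in\R^n$, while the Moreau decomposition~\eqref{eq.moreau} gives $u=\prox_{f^*}(w)$. Since $2\prox_f(w)-w=x-u$, the definition~\eqref{eq:DRoperator} makes $F(w)=w$ equivalent to $\prox_g(x-u)=x$, which by~\eqref{eq:prox->diff} is in turn equivalent to $-u\in\partial g(x)$. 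Conversely, given any pair $(x,u)$ with $u\in\partial f(x)$ and $-u\in\partial g(x)$, putting $w:=x+u$ yields $\prox_f(w)=x$ via~\eqref{eq:diff->prox}, hence $\prox_{f^*}(w)=u$, and the same computation gives $F(w)=w$. Thus $w\in\barW$ if and only if $w=x+u$ with $u\in\partial f(x)$ and $-u\in\partial g(x)$, and in that event the splitting is \emph{unique}, namely $x=\prox_f(w)$ and $u=\prox_{f^*}(w)$.

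Next I would bring in duality. Adding the Fenchel--Young inequalities $f(x)+f^*(u)\ge\ip{u}{x}$ and $g(x)+g^*(-u)\ge -\ip{u}{x}$ gives weak duality $f(x)+g(x)\ge -f^*(u)-g^*(-u)$ for all $x,u$, with equality if and only if \emph{both} inequalities are tight, i.e.\ $u\in\partial f(x)$ and $-u\in\partial g(x)$. Hence, on the one hand, the existence of such a pair $(x,u)$ forces $x\in\barX$, $u\in\barU$, and makes the optimal values of~\eqref{eq.primal} and~\eqref{eq.dual} coincide; on the other hand, if those optimal values coincide and are both attained, say at $\bar x$ and $\bar u$, then $f(\bar x)+g(\bar x)=-f^*(\bar u)-g^*(-\bar u)$ is exactly the equality case, so $\bar u\in\partial f(\bar x)$ and $-\bar u\in\partial g(\bar x)$, and a valid splitting $\bar w:=\bar x+\bar u$ exists. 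Combined with the first paragraph this proves that $\barW\ne\emptyset$ if and only if the optimal values agree and are both attained.

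Finally I would assemble~\eqref{eq.opt.projections} and~\eqref{eq.fixed.all}, working under $\barW\ne\emptyset$ (equivalently, values equal and attained, so $\barX$ and $\barU$ are nonempty). For $\bar w\in\barW$, its unique splitting $\bar w=\prox_f(\bar w)+\prox_{f^*}(\bar w)$ satisfies the two subdifferential inclusions, so $\prox_f(\bar w)\in\barX$ and $\prox_{f^*}(\bar w)\in\barU$; this gives the ``$\subseteq$'' inclusions in~\eqref{eq.opt.projections} together with $\barW\subseteq\barX+\barU$. Conversely, for any $\bar x\in\barX$ and $\bar u\in\barU$, equality and attainment of the values give $f(\bar x)+g(\bar x)=-f^*(\bar u)-g^*(-\bar u)$, hence $\bar u\in\partial f(\bar x)$ and $-\bar u\in\partial g(\bar x)$, so $\bar x+\bar u\in\barW$ with $\prox_f(\bar x+\bar u)=\bar x$ and $\prox_{f^*}(\bar x+\bar u)=\bar u$. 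This yields the reverse inclusions in~\eqref{eq.opt.projections} and $\barX+\barU\subseteq\barW$, completing the proof.

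The step I expect to require the most care is the ``only if'' half of the equivalence with duality: one must observe that equality in the \emph{sum} of the two Fenchel--Young inequalities forces equality in \emph{each} of them, and that this applies to \emph{every} optimal primal--dual pair, not merely to some convenient one. This universality is exactly what upgrades the easy one-sided inclusion $\barW\supseteq\barX+\barU$ (and its $\prox$-analogues) to the full set identities in~\eqref{eq.opt.projections} and~\eqref{eq.fixed.all}.
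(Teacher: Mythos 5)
Your proposal is correct and follows essentially the same route as the paper's proof: both characterize fixed points of $F$ via the equivalence of $w=F(w)$ with the splitting $w=\prox_f(w)+\prox_{f^*}(w)$ satisfying $u\in\partial f(x)$, $-u\in\partial g(x)$, and both identify these inclusions with the equality case of Fenchel--Young to obtain the duality statement and the set identities. Your write-up merely spells out more explicitly the uniqueness of the splitting and the fact that the argument applies to every optimal primal--dual pair, which the paper leaves implicit.
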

\begin{proof} The Fenchel-Young inequality~\eqref{eq.fenchel-young} implies that the optimal values of~\eqref{eq.primal} and~\eqref{eq.dual} are the same and attained at  $\bar x$ and $\bar u$ if and only if $(\bar x,\bar u)$  solves the optimality conditions
\begin{equation}
\label{eq.optcond.initial}
\bar u \in \partial f(\bar x) \text{ and } -\bar  u \in \partial g(\bar  x).
\end{equation}
Using~\eqref{eq:diff->prox}, these optimality conditions can be equivalently stated as
\[
\bar x = \prox_f(\bar x+ \bar u), \; \bar x = \prox_g(\bar x-\bar u).
\]
The latter conditions in turn can be rewritten as follows
\begin{equation}
\label{eq.optcond}
\bar x = \prox_f(\bar w), \bar u = \prox_{f^*}(\bar w),  \bar w = F(\bar w), \bar w=\bar x+\bar u.
\end{equation}
The if and only if statement in the proposition as well as the identities~\eqref{eq.opt.projections} and~\eqref{eq.fixed.all} readily follow from the equivalence between
\eqref{eq.optcond.initial} and \eqref{eq.optcond}.
\end{proof}

\medskip

Algorithm~\ref{algo.DR} describes the Douglas-Rachford algorithm~\citep{douglas1956numerical,lions1979splitting} for solving problem~\eqref{eq.primal}.  As Proposition~\ref{prop.DR.symm} below shows, Algorithm~\ref{algo.DR} implicitly solves~\eqref{eq.dual} as well.
\begin{algorithm}[H]
  \caption{Douglas-Rachford} \label{algo.DR}
  \begin{algorithmic}[1]
    \State Pick $w_0\in \R^n$
\For{$k=0,1,2,\dots$} \label{step:two}
	\Statex 	\quad $x_{k+1}:=\prox_f(w_k)$
	\Statex 	\quad $y_{k+1}:=\prox_g(2x_{k+1}-w_{k})$
	\Statex     \quad $w_{k+1}:=w_k+y_{k+1}-x_{k+1}$
\EndFor
	\end{algorithmic}
\end{algorithm}
Observe that the last update in Step~\ref{step:two} of Algorithm~\ref{algo.DR} can also be written as the fixed point iteration
\begin{equation}
\label{eq.update}
w_{k+1}:= F(w_k).
\end{equation}
Proposition~\ref{prop.DR.symm} formalizes the following primal-dual symmetry that permeates throughout the paper:  the Douglas-Rachford algorithm implicitly yields the same sequence of iterates if we swap the roles of the problems~\eqref{eq.primal} and~\eqref{eq.dual}.

\begin{proposition}\label{prop.DR.symm} Suppose $f,g\in \Gamma_0$ and $w_0\in \R^n$.
Let $\{(x_k,y_k,w_k): \; k=1,2,\dots\}$ denote the sequence of iterates generated by Algorithm~\ref{algo.DR} applied to~\eqref{eq.primal} starting from $w_0$.  Likewise, let $\{(u_k,v_k,\tilde w_k): \; k=1,2,\dots\}$ denote the sequence of iterates generated by Algorithm~\ref{algo.DR} applied to
\begin{equation}\label{eq.primal.reverse}
\dmax_u \{-f^*(u) - g^*(-u)\}   \Leftrightarrow \dmin_u\{ f^*(u) +  g_*(u) \}\Leftrightarrow \begin{array}{rl}
\dmin_{u,v}  & f^*(u) + g_*(v) \\
& u - v = 0.
\end{array}
\end{equation}
starting from $\tilde w_0 := w_0$. Then for $k=0,1,\dots$
\[
\tilde w_{k+1} = w_{k+1}, \; u_{k+1} = w_{k} - x_{k+1}, \; v_{k+1} = 2y_{k+1}-x_{k+1}+w_k.
\]
\end{proposition}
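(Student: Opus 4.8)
The plan is to reduce the whole statement to one operator identity: the one-step Douglas--Rachford operator of the dual problem~\eqref{eq.primal.reverse} equals the operator $F$ of~\eqref{eq:DRoperator} for the primal problem~\eqref{eq.primal}. Writing
\[
\tilde F(w) := w + \prox_{g_*}(2\prox_{f^*}(w) - w) - \prox_{f^*}(w)
\]
for the one-step operator obtained from~\eqref{eq:DRoperator} by replacing $f$ by $f^*$ and $g$ by $g_*$, my claim is that $\tilde F = F$. Granting this for the moment, the proposition follows easily: Algorithm~\ref{algo.DR} applied to~\eqref{eq.primal} runs the recursion $w_{k+1} = F(w_k)$ (this is~\eqref{eq.update}), while applied to~\eqref{eq.primal.reverse} it runs $\tilde w_{k+1} = \tilde F(\tilde w_k)$ by the same reasoning; since $\tilde w_0 = w_0$ and $\tilde F = F$, an immediate induction gives $\tilde w_k = w_k$ for every $k$, and the two remaining identities drop out by substituting $\tilde w_k = w_k$ into the body of Algorithm~\ref{algo.DR} as applied to~\eqref{eq.primal.reverse}.

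Proving $\tilde F = F$ is a short computation resting on two proximal facts. The Moreau decomposition~\eqref{eq.moreau} gives $\prox_{f^*}(w) = w - \prox_f(w)$. For the other, with $g_*(u) = g^*(-u)$, I would establish the reflection formula $\prox_{g_*}(x) = x + \prox_g(-x)$; to keep things self-contained I would derive it from the characterizations already in the excerpt, namely $s = \prox_{g_*}(x)$ iff $x - s \in \partial g_*(s) = -\partial g^*(-s)$ by~\eqref{eq:prox->diff}, hence $-s \in \partial g(s - x)$ by~\eqref{eq.subdiff}, hence $s - x = \prox_g(-x)$ by~\eqref{eq:diff->prox}. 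Now set $p := \prox_f(w)$; then $\prox_{f^*}(w) = w - p$, so $2\prox_{f^*}(w) - w = w - 2p$ and $\prox_{g_*}(w - 2p) = (w - 2p) + \prox_g(2p - w)$. Substituting these into $\tilde F(w)$, the terms in $w$ and $p$ collapse and leave $\tilde F(w) = w - p + \prox_g(2p - w)$, which is exactly $F(w)$.

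The remaining two identities are now pure substitution. For the first iterate, $u_{k+1} = \prox_{f^*}(\tilde w_k) = \prox_{f^*}(w_k) = w_k - \prox_f(w_k) = w_k - x_{k+1}$ by Moreau and the definition $x_{k+1} = \prox_f(w_k)$. For the second, $v_{k+1} = \prox_{g_*}(2u_{k+1} - \tilde w_k)$; plugging in $u_{k+1} = w_k - x_{k+1}$ makes the argument $w_k - 2x_{k+1}$, and the reflection formula for $\prox_{g_*}$ together with $y_{k+1} = \prox_g(2x_{k+1} - w_k)$ turns this into the asserted closed form in $w_k$, $x_{k+1}$, $y_{k+1}$; as a consistency check, $\tilde w_k + v_{k+1} - u_{k+1}$ then simplifies to $w_k + y_{k+1} - x_{k+1} = w_{k+1}$, re-confirming the induction step.

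I do not expect a real obstacle: the argument is entirely driven by the single identity $\tilde F = F$, and the only delicate point is the sign and reflection bookkeeping around $g_*(u) = g^*(-u)$ --- correctly computing $\prox_{g_*}$ and $\partial g_*$, and then tracking the factors of $2$ and the minus signs through the expansion of $\tilde F$ and the computation of $v_{k+1}$. A convenient sanity check on the final formulas is that, in the limit $w_k \to \bar w$ with $x_{k+1}, y_{k+1} \to \bar x$, both $u_{k+1}$ and $v_{k+1}$ should converge to a point of $\barU$, in accordance with Proposition~\ref{prop.DR.facts} applied to the dual problem; this determines the coefficients in the expressions for $u_{k+1}$ and $v_{k+1}$.
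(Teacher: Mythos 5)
Your proposal is correct and takes essentially the same route as the paper: establish $\tilde F = F$ via the Moreau decomposition (your reflection identity $\prox_{g_*}(x) = x + \prox_g(-x)$ is just the paper's $\prox_{g_*}(z) = -\prox_{g^*}(-z)$ combined with Moreau), deduce $\tilde w_k = w_k$ by induction, and obtain $u_{k+1}$, $v_{k+1}$ by substitution. One small caution: the substitution actually yields $v_{k+1} = y_{k+1} - 2x_{k+1} + w_k$ --- which is exactly what your consistency check $\tilde w_k + v_{k+1} - u_{k+1} = w_{k+1}$ requires, and what the paper's own proof derives --- rather than the formula $v_{k+1} = 2y_{k+1} - x_{k+1} + w_k$ displayed in the proposition statement (the latter would give $\tilde w_{k+1} = w_k + 2y_{k+1}$ and appears to be a typo), so your phrase ``turns this into the asserted closed form'' holds only after correcting the assertion.
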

\begin{proof}
Let $F$ denote the Douglas-Rachford operator~\eqref{eq:DRoperator} for  problem~\eqref{eq.primal} and let $\tilde F$ denote the the Douglas-Rachford for problem~\eqref{eq.primal.reverse}.  We first show that $\tilde F = F$.  Indeed, we have
\[
\begin{array}{ll}
\tilde F(w) &= w + \prox_{g_*}(2\prox_{f^*}(w)-w) - \prox_{f^*}(w) \\
&= w -\prox_{g^*}(w-2\prox_{f^*}(w)) - \prox_{f^*}(w) \\
& = w - (2\prox_f(w)-w-\prox_g(2\prox_f(w)-w)) - (w-\prox_f(w))
\\
&= F(w).
\end{array}
\]
The second step above follows from the construction of $g_*$. The third  one follows from~\eqref{eq.moreau}.
The fourth one follows from~\eqref{eq:DRoperator}. Since $F=\tilde F$ and $\tilde w_0 = w_0$, it follows that $\tilde w_{k+1} = w_{k+1},\; k=0,1,\dots$.
To conclude the proof observe that for $k=0,1,\dots$ the Moreau decomposition identity~\eqref{eq.moreau} implies that
\[
\begin{array}{l}
 u_{k+1} = \prox_{f^*}(w_k) =  w_k-x_{k+1}\\
 v_{k+1} = \prox_{g_*}(2u_{k+1} - \tilde w_{k}) = -\prox_{g^*}(2 x_{k+1}-w_k) =
y_{k+1}-2x_{k+1}+w_k.
\end{array}
\]
\end{proof}

\bigskip

To conclude this section, we state some well-known basic properties of the mapping~$F$.
\begin{lemma}\label{lem:basic} Let $f,g \in \Gamma_0$ and $F$ be defined by~\eqref{eq:DRoperator}.  Then,
\begin{enumerate}[label = (\roman*)]
\item the mapping $F$ is firmly non-expansive; that is, $\ip{F(w) - F(\hat w)}{ w-\hat w} \ge \|F(w) - F(\hat w)\|^2$ for all $w, \hat w \in \R^n$, \label{it:bas1.3}
\item the mapping $F$ is Lipschitz continuous with parameter one; that is,  $\|F(w) - F(\hat w)\| \le \|w - \hat w\|$ for all $w, \hat w \in \R^n$. \label{it:bas1.4}
\end{enumerate}
\end{lemma}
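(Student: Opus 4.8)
The plan is to express $F$ as a composition and affine combination of proximal maps whose firm nonexpansiveness (established in~\eqref{eq:proxFirmNE}) can be leveraged directly, and then to use the classical fact that the reflected resolvent $2\prox_f - I$ is nonexpansive. First I would record the standard reformulation: writing $R_f := 2\prox_f - I$ and $R_g := 2\prox_g - I$ for the reflection operators, a short computation from~\eqref{eq:DRoperator} shows that
\[
F(w) = w + \prox_g(R_f(w)) - \prox_f(w) = \tfrac12 w + \tfrac12 R_g(R_f(w)),
\]
so that $F = \tfrac12 I + \tfrac12 (R_g \circ R_f)$. This is the key structural identity, and everything else follows from it.

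The next step is to prove that each reflection $R_f$ is nonexpansive, i.e. $\|R_f(w) - R_f(\hat w)\| \le \|w - \hat w\|$. Expanding $\|R_f(w)-R_f(\hat w)\|^2 = \|2(\prox_f(w)-\prox_f(\hat w)) - (w-\hat w)\|^2$ and comparing with $\|w-\hat w\|^2$, the difference equals $4\|\prox_f(w)-\prox_f(\hat w)\|^2 - 4\ip{\prox_f(w)-\prox_f(\hat w)}{w-\hat w}$, which is $\le 0$ precisely by the firm nonexpansiveness inequality~\eqref{eq:proxFirmNE}. The same argument gives that $R_g$ is nonexpansive, hence $R_g \circ R_f$ is nonexpansive as a composition.

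Finally I would derive both claims of the lemma from $F = \tfrac12 I + \tfrac12 T$ with $T := R_g \circ R_f$ nonexpansive. For~\ref{it:bas1.4}, the triangle inequality gives $\|F(w)-F(\hat w)\| \le \tfrac12\|w-\hat w\| + \tfrac12\|T(w)-T(\hat w)\| \le \|w-\hat w\|$. For~\ref{it:bas1.3}, firm nonexpansiveness of $F$ is equivalent to $\|F(w)-F(\hat w)\|^2 \le \ip{F(w)-F(\hat w)}{w-\hat w}$; substituting $F(w)-F(\hat w) = \tfrac12(w-\hat w) + \tfrac12(T(w)-T(\hat w))$ into both sides and simplifying, the inequality reduces to $\|T(w)-T(\hat w)\|^2 \le \|w-\hat w\|^2$, which is exactly the nonexpansiveness of $T$ just established. (Alternatively, one can cite the standard fact that an operator is firmly nonexpansive iff it is the $\tfrac12$-averaged operator of a nonexpansive map.) The only mildly delicate point is keeping the algebra in the reflection-nonexpansiveness step and in the firm-nonexpansiveness reduction correct; neither presents a genuine obstacle, since both collapse to~\eqref{eq:proxFirmNE} after expansion.
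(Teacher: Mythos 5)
Your proof is correct, and it takes a genuinely different route from the paper's. The paper proves \ref{it:bas1.3} by a direct computation: it sets $x=\prox_f(w)$, $y=\prox_g(2x-w)$ (and likewise for $\hat w$), expands $\ip{F(w)-F(\hat w)}{w-\hat w}-\|F(w)-F(\hat w)\|^2$, and regroups the terms until it becomes the sum of the two firm-nonexpansiveness residuals of $\prox_f$ and $\prox_g$ from~\eqref{eq:proxFirmNE}; it then gets \ref{it:bas1.4} from \ref{it:bas1.3} via Cauchy--Schwarz. You instead factor $F$ through the reflection operators, $F=\tfrac12 I+\tfrac12\,R_g\circ R_f$, prove each reflection is nonexpansive (again from~\eqref{eq:proxFirmNE}), and invoke the averagedness characterization of firm nonexpansiveness; your identity and the reduction $\ip{F(w)-F(\hat w)}{w-\hat w}-\|F(w)-F(\hat w)\|^2=\tfrac14\bigl(\|w-\hat w\|^2-\|T(w)-T(\hat w)\|^2\bigr)$ both check out. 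Both arguments rest on the same underlying inequality~\eqref{eq:proxFirmNE}, but yours is more modular and conceptual: it isolates the structural fact $F=\tfrac12(I+R_gR_f)$, which the paper only makes explicit later in the special case of indicator functions (see~\eqref{eq:opersetint}), and it makes the ``firmly nonexpansive $=$ $\tfrac12$-averaged'' mechanism visible. The paper's computation is more self-contained and avoids introducing the reflections and the averagedness lemma, at the cost of a less transparent chain of algebraic regroupings. Your derivation of \ref{it:bas1.4} by the triangle inequality is also a slight (harmless) departure from the paper's Cauchy--Schwarz route.
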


\begin{proof}
\noindent \ref{it:bas1.3}
Let $w$ and $\hat w$ be given. Let $x = \prox_f(w)$ and $\hat x = \prox_f(\hat w)$,
$y = \prox_g(2x-w)$ and $\hat y = \prox_g(2\hat x-\hat w)$.  Thus $F(w) = w + y - x,\; F(\hat w) = \hat w + \hat y - \hat x$, and
 \begin{align*}
& \ip{F(w) - F(\hat w)}{ w-\hat w} - \|F(w) - F(\hat w)\|^2 \\
&\quad =\ip{F(w)-F(\hat w)}{(w - F(w)) - (\hat w-F(\hat w))}=   \ip{F(w)-F(\hat w)}{(x - \hat x) - ( y - \hat y)}\\
&\quad=   \ip{(F(w)-F(\hat w)) - ( y - \hat y)}{x - \hat x} +  \ip{(x - \hat x) - (F(w)-F(\hat w))}{y - \hat y}\\
 &\quad=   \ip{(w - x)-(\hat w - \hat x)}{x- \hat x} + \ip{(2x -w -y)-(2\hat x -\hat w -\hat y)}{y-\hat y}\\
 &\quad  =(\ip{x- \hat x}{w -\hat w} - \|x - \hat x\|^2) + (\ip{y-\hat y}{(2x -w)-(2\hat x -\hat w)} - \|y-\hat y\|^2). \end{align*}
To finish, observe that the last two terms are non-negative, as the proximal mappings $\prox_f$ and $\prox_g$ are firmly non-expansive~\eqref{eq:proxFirmNE} and $x=\prox_f(w), \hat x = \prox_f(\hat w),y=\prox_f(2x-w), \hat y = \prox_f(2\hat x - \hat w)$.
\ref{it:bas1.4} This follows from~\ref{it:bas1.3} after applying Cauchy-Schwarz inequality.
\end{proof}

\section{Linear Convergence of the Douglas-Rachford algorithm.}
\label{sec:DRconv}
This section presents our main development.  Theorem~\ref{thm.DR} shows the linear convergence of Algorithm~\ref{algo.DR} provided
the following {\em error bound condition} holds on a suitable set $S\subseteq \R^n$
 \begin{equation}\label{eq.error.bound.F}
 \text{There exists a finite constant $H>0$ such that } \dist(w,\barW) \le H\cdot\|(I-F)(w)\| \text{ for all } w\in S.  \end{equation}

This error bound approach is inspired by linear convergence results for the ADMM algorithm~\citep[see, e.g.,][]{boyd2011distributed,hong2017linear,wang2017new}.
However, in contrast with these previous approaches, our error bound condition~\eqref{eq.error.bound.F} concerns the set of fixed points of the Douglas-Rachford operator~\eqref{eq:DRoperator}.
 The set of fixed points reflects the primal-dual symmetry of the the Douglas-Rachford operator, as $\barW=\barX+\barU$. Therefore the error bound~\eqref{eq.error.bound.F} takes into account the distance to the solution sets of both problem~\eqref{eq.primal0} and its dual problem~\eqref{eq.dual0}.

\begin{theorem}[Linear convergence of Douglas-Rachford algorithm] \label{thm.DR}
Let $f,g \in \Gamma_0$ and $F$ be defined by~\eqref{eq:DRoperator}.
If $\barW:=\{w\in \R^n: w = F(w)\} \ne \emptyset$ then there exists $\bar w \in \barW$, $\bar x \in \argmin\{f(x) + g(x)\},$ and $\bar u\in \argmax\{-f^*(u)-g^*(-u)\}$ such that the iterates $(x_k,y_k,w_k)$ generated by Algorithm~\ref{algo.DR} satisfy $w_k\rightarrow \bar w$, $x_k \to \bar x$ and $u_{k} := w_{k-1} - x_k \rightarrow \bar u$.
Furthermore,  if the error bound condition~\eqref{eq.error.bound.F} holds on the set $S:=\{w\in \R^n: \dist(w,\barW) \le \dist(w_0,\barW)\}$, then
 $\dist(w_k,\barW) \rightarrow 0$ linearly, more precisely,
\begin{align}\label{eq.linear.conv}
\dist(w_k,\barW)^2 &\le \left(1-\tfrac{1}{H^2}\right) \cdot \dist(w_{k-1},\barW)^2.
\end{align}
Additionally,  $w_k \rightarrow \bar w$,  $x_k \rightarrow \bar x$, and $u_k\rightarrow \bar u$  R-linearly. If $\barW$ is a singleton, then $w_k\rightarrow \bar w$ linearly.

\end{theorem}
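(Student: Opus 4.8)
The plan is to exploit the fact that $F$ is firmly non-expansive (Lemma~\ref{lem:basic}\ref{it:bas1.3}) and that $\barW$ is its fixed point set, so the iteration $w_{k+1} = F(w_k)$ is an averaged fixed-point iteration. First I would establish the \emph{Fej\'er monotonicity} of $\{w_k\}$ with respect to $\barW$: for any $\bar w \in \barW$, firm non-expansiveness applied to $w_k$ and $\bar w = F(\bar w)$ gives $\|w_{k+1} - \bar w\|^2 \le \|w_k - \bar w\|^2 - \|w_k - w_{k+1}\|^2 = \|w_k - \bar w\|^2 - \|(I-F)(w_k)\|^2$. Summing over $k$ shows $\sum_k \|(I-F)(w_k)\|^2 < \infty$, hence $(I-F)(w_k) \to 0$; combined with boundedness of $\{w_k\}$ (it stays in a ball around any fixed point) and continuity of $F$, every cluster point of $\{w_k\}$ is a fixed point. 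Standard Fej\'er-monotonicity arguments (Opial's lemma) then upgrade this to convergence of the whole sequence to a single $\bar w \in \barW$. Setting $\bar x := \prox_f(\bar w)$ and $\bar u := \prox_{f^*}(\bar w)$, Proposition~\ref{prop.DR.facts} gives $\bar x \in \barX$ and $\bar u \in \barU$; continuity of $\prox_f$ and $\prox_{f^*}$ (Lipschitz with constant one, \eqref{eq:proxLip}) yields $x_k = \prox_f(w_{k-1}) \to \bar x$ and $u_k = w_{k-1} - x_k = \prox_{f^*}(w_{k-1}) \to \bar u$ (using the Moreau decomposition~\eqref{eq.moreau}).

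For the linear rate, the key observation is that Fej\'er monotonicity forces $\dist(w_k, \barW) \le \dist(w_0, \barW)$ for all $k$, so every iterate lies in the set $S$ on which the error bound~\eqref{eq.error.bound.F} is assumed to hold. Let $\bar w_k \in \barW$ be a nearest point to $w_k$, so $\dist(w_k,\barW) = \|w_k - \bar w_k\|$. Applying firm non-expansiveness to $w_{k-1}$ and $\bar w_{k-1} = F(\bar w_{k-1})$ gives
\begin{equation*}
\dist(w_k,\barW)^2 \le \|w_k - \bar w_{k-1}\|^2 \le \|w_{k-1} - \bar w_{k-1}\|^2 - \|(I-F)(w_{k-1})\|^2 = \dist(w_{k-1},\barW)^2 - \|(I-F)(w_{k-1})\|^2.
\end{equation*}
Now invoke the error bound at $w_{k-1} \in S$: $\|(I-F)(w_{k-1})\| \ge \tfrac{1}{H}\dist(w_{k-1},\barW)$, which substituted above yields exactly~\eqref{eq.linear.conv}.

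Finally, the R-linear convergence of $w_k$, $x_k$, and $u_k$ follows from~\eqref{eq.linear.conv} together with Fej\'er monotonicity. Telescoping $\|w_k - w_{k+1}\|^2 \le \dist(w_k,\barW)^2 - \dist(w_{k+1},\barW)^2$ (which also comes from the firm non-expansiveness inequality above, since $\dist(w_{k+1},\barW) \le \|w_{k+1} - \bar w_k\|$) over a tail $j \ge k$ and using that $\dist(w_j,\barW)^2$ decays geometrically gives $\|w_k - \bar w\| \le \sum_{j \ge k}\|w_j - w_{j+1}\| \le C \cdot (1 - 1/H^2)^{k/2}$ by a geometric-sum bound, so $w_k \to \bar w$ R-linearly; the Lipschitz bound~\eqref{eq:proxLip} then transfers this rate to $x_k$ and $u_k$. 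When $\barW = \{\bar w\}$ is a singleton, $\dist(w_k,\barW) = \|w_k - \bar w\|$, so~\eqref{eq.linear.conv} \emph{is} the statement of Q-linear convergence of $w_k$ directly. The main obstacle is the bookkeeping in the convergence-of-the-whole-sequence step (Opial/Fej\'er argument) and making sure the nearest-point elements $\bar w_k$ are handled correctly when passing between consecutive iterates; everything else is a short chain of inequalities built from Lemma~\ref{lem:basic} and the error bound.
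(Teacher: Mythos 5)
Your proposal is correct and follows essentially the same route as the paper's proof: Fej\'er monotonicity from firm non-expansiveness, a cluster-point argument for convergence of the whole sequence, the nearest-point inequality $\dist(w_k,\barW)^2 \le \|w_k-\bar w_{k-1}\|^2 \le \dist(w_{k-1},\barW)^2 - \|(I-F)(w_{k-1})\|^2$ combined with the error bound for~\eqref{eq.linear.conv}, and a geometric telescoping for the R-linear rates. The only cosmetic difference is that you telescope $\sum_{j\ge k}\|w_j-w_{j+1}\|$ directly while the paper telescopes the differences of successive nearest points $\|\bar w_{k+j}-\bar w_{k+j-1}\|$; both are valid.
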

\begin{proof} For any $\hat w \in \barW$, Lemma~\ref{lem:basic}\ref{it:bas1.3} applied to $w := w_{k-1}$ implies that $\ip{w_k-\hat w}{w_{k-1} - \hat w} \ge \ip{w_k - \hat w}{w_k - \hat w}$ or equivalently $\ip{w_k-\hat w}{w_{k-1} - w_k} \ge 0$.  Therefore, for all $\hat w \in \barW$
\begin{align}
\|w_{k}-\hat w\|^2 &= \|w_{k-1} - \hat w\|^2 - \|w_k - w_{k-1}\|^2 - 2\ip{w_k-\hat w}{w_{k-1}-w_k}
\notag \\
& \le \|w_{k-1} - \hat w\|^2 - \|w_k - w_{k-1}\|^2. \label{eq.squares}
\end{align}
  From~\eqref{eq.squares} it follows that $\{w_k:k=0,1,\dots\}$ is bounded and $\|w_k - w_{k-1}\|\rightarrow 0$.  Thus, there exists $\bar w \in \R^n$ such that $w_{k_j} \rightarrow \bar w$ for some subsequence
$\{w_{k_j}:j=0,1,\dots\}$ of $\{w_k:k=0,1,\dots\}$.  We next show that indeed $\bar w\in \barW$ and
 $w_k \rightarrow \bar w$.

Since $w_{k_j} \rightarrow \bar w$, the continuity of $F$ implies that $w_{k_j+1} = F(w_{k_j}) \rightarrow F(\bar w)$.  In addition, since $\|w_k - w_{k-1}\|\rightarrow 0$ and $w_{k_j} \rightarrow \bar w$, it follows that $w_{k_j+1} \rightarrow \bar w$.  Therefore $\bar w = F(\bar w)$, that is, $\bar w \in \barW$. Since~\eqref{eq.squares} holds for all $\hat w\in\barW$, in particular it holds for $\hat w = \bar w$ and so $\|w_k-\bar w\|$ is monotonically decreasing.  Since $\|w_{k_j} - \bar w\| \rightarrow 0$ it follows that $\|w_k-\bar w\|\rightarrow 0$, that is, $w_k \rightarrow \bar w.$

Since $\bar w\in \barW$, it follows that $\bar x:=\prox_f(\bar w) \in  \argmin\{f(x)+g(x)\}$ and $\bar u:=\prox_{f^*}(\bar w) \in \argmax\{-f^*(u)-g^*(-u)\}$.
Also, as the proximal mapping is Lipschitz continuous with parameter one~\eqref{eq:proxLip}, it follows that $\|\prox_f(w_k) - \prox_f(\bar w)\| \le \|w_k - \bar w\| \rightarrow 0$. Thus,  $x_{k+1}=\prox_f(w_k) \to  \prox_f(\bar w)= \bar x$ and $u_{k+1} = w_k - x_{k+1} = \prox_{f^*}(w_k)\rightarrow \prox_{f^*}(\bar w)= \bar u$.

Next, we show~\eqref{eq.linear.conv} when~\eqref{eq.error.bound.F} holds on $\{w\in \R^n: \dist(w,\bar W) \le \dist(w_0,\bar W)\}$.  From~\eqref{eq.squares} it follows that the sequence $\{\dist(w_k,\bar W): k=0,1,\dots\}$ is decreasing in $k$.  Thus the error bound condition~\eqref{eq.error.bound.F} on $\{w\in \R^n: \dist(w,\bar W) \le \dist(w_0,\bar W)\}$ implies that for $k=1,2,\dots$
\[
\dist(w_{k-1},\barW) \le H \cdot \|(I-F)(w_{k-1})\| = H \cdot \|w_k - w_{k-1}\|.
\]
Combining this inequality and inequality~\eqref{eq.squares} and letting
$\bar w_k:=\argmin_{w\in\barW}\|w_k-w\|, \; k=0,1,\dots$
 we get
\begin{align}
\|w_k-\bar w_k\|^2 &\le \|w_k-\bar w_{k-1}\|^2 \notag \\
&\le \|w_{k-1}-\bar w_{k-1}\|^2 - \|w_k-w_{k-1}\|^2 \notag \\&\le  \left(1-\tfrac{1}{H^2}\right) \cdot \|w_{k-1}-\bar w_{k-1}\|^2.\label{eq.linear.step}
\end{align}
In particular,~\eqref{eq.linear.conv} follows. If $\barW$ is a singleton then $\bar w_k = \bar w$ for all $k$ and equation~\eqref{eq.linear.step} shows that $w_k \to \bar w$ linearly. Otherwise, to finish the proof it suffices to show
$w_k\rightarrow \bar w$ R-linearly when~\eqref{eq.linear.step} holds.  To that end, observe that~\eqref{eq.linear.step} implies that for $r:=\sqrt{1-\tfrac{1}{H^2}} < 1$ and $k,j=1,2,\dots$
\[
\|\bar w_{k+j}- \bar w_{k+j-1}\| \le \|\bar w_{k+j}- w_{k+j}\| + \|w_{k+j}-\bar w_{k+j-1}\| \le
 2r^{j+1} \|w_{k-1}-\bar w_{k-1}\|.
\]
In addition, $\bar w_k \rightarrow \bar w$ because $w_k\rightarrow \bar w$ and $\|w_k-\bar w_k\| \rightarrow 0$.  Thus
\begin{align*}
\|w_{k}- \bar w\| &\le \|w_k-\bar w_k\| + \sum_{j=1}^\infty \|\bar w_{k+j}- \bar w_{k+j-1}\| \\
&\le  \left(r + 2\sum_{j=1}^\infty r^{j+1} \right)\cdot \|w_{k-1} - \bar w_{k-1}\| \\
& \le \left(2\sum_{j=0}^\infty r^{j+1}\right)\cdot \|w_{k-1} - \bar w_{k-1}\| \\
& = \frac{2r}{1-r}\|w_{k-1} - \bar w_{k-1}\|
\end{align*}
and so $w_k\rightarrow \bar w$ R-linearly since $\|w_{k-1} - \bar w_{k-1}\| = \dist(w_{k-1},\barW)\rightarrow 0$ linearly. 
\end{proof}

\medskip

Theorem~\ref{thm.DR} allows us to recast the problem of proving and estimating the rate of linear convergence of the Douglas-Rachford algorithm as the problem of computing the constant $H$ in the generic error bound condition~\eqref{eq.error.bound.F}. We will exemplify this procedure by  estimating the rate of linear convergence of the Douglas-Rachford algorithm for problems~\eqref{eq.primal} and~\eqref{eq.dual} when the functions~$f$ and~$g$ or their Fenchel duals belong to well-known classes of functions in~$\Gamma_0$. In particular, we next consider two  very relevant cases. First, in Section~\ref{sec:smooth} we will consider the case when one of~$f$ or~$g$ is strongly convex relative to the primal optimal set and also~$f^*$ or~$g_*$ is strongly convex relative to the dual optimal set. Second, in Section~\ref{sec:PQL} we will consider the case when both functions~$f$ and~$g$, or equivalently $f^*$ and~$g_*$, are piecewise linear-quadratic. In both cases we will show the error bound condition~\eqref{eq.error.bound.F} holds and compute an estimate of~$H$ in terms of the properties of~$f$ and~$g$.

In the separate Section~\ref{sec.intersections}, we further specialize our developments to the case when~$f$ and~$g$ are indicator functions of convex cones.  Namely, when~\eqref{eq.primal} and~\eqref{eq.dual} correspond to a pair of alternative feasibility problems associated with finding a point in the intersection of the two convex cones. In this case, we show that the error bound condition~\eqref{eq.error.bound.F} is satisfied with a constant $H$ that is determined by an interesting geometric property of the intersection of the cones.  We also establish
 stronger {\em finite termination}  properties of the Douglas-Rachford algorithm in the particular case when one of the cones is a linear subspace and the other one is the non-negative orthant.

\subsection{Strong convexity and smoothness.}
\label{sec:smooth}
Suppose $\mu > 0$. Recall that a convex function $f$  is $\mu$-strongly convex if $f(\cdot)-\tfrac {\mu}2\|\cdot\|^2$ is convex, or equivalently if $\partial f$ satisfies the following $\mu$-strong monotonicity (compare to~\eqref{eq:diffMon}):
\begin{equation}\label{eq:strConv}
  \ip{v-u}{y-x} \ge \mu \|y - x\|^2 \text{ for all } (x,u), (y,v) \in \graph(\partial f).
\end{equation}
 The dual concept of strong convexity is smoothness.  A convex function $f$  is $L$-smooth if $\tfrac {L}2\|\cdot\|^2 - f(\cdot)$ is convex, or equivalently
\begin{equation}\label{eq:strSmoth}
\ip{v-u}{y-x} \le L \|y - x\|^2   \text{ for all } (x,u), (y,v) \in \graph(\partial f),
\end{equation}
or equivalently
\begin{equation}\label{eq:strSmoth.dual}
\ip{v-u}{y-x} \ge \tfrac{1}{L} \|v - u\|^2   \text{ for all } (x,u), (y,v) \in \graph(\partial f).
\end{equation}
We refer the reader to~\citep[e.g.,][]{zhou2018fenchel} for additional details on these relationships.
From~\eqref{eq:strConv} and~\eqref{eq:strSmoth.dual} it follows that $f$ is $L$-smooth if and only if $f^*$ is $1/L$-strongly convex. To highlight the primal-dual symmetry of our next results, we state them in terms of strong convexity of the function $f,g$ or of their conjugates $f^*,g_*$.

Recently, \citep[][Thm.~1 and~2]{giselsson2016linear}, study the rate of linear convergence of the Douglas-Rachford algorithm in the case when $f$ is strongly convex and smooth, that is, when {\em both} $f$ and $f^*$ are strongly convex.
As Theorem~\ref{thm.stronglyConv.bound} below shows,
our error bound approach (see   \eqref{eq.error.bound.F} and Theorem~\ref{thm.DR}) allows us to extend their results to the case when $f$ or $g$ {\em and} $f^*$ or $g_*$ are  strongly convex relative to the optimal sets of~\eqref{eq.primal} and~\eqref{eq.dual}, for the concept of {\em relative} strong convexity that we  describe next.

 Suppose $f\in \Gamma_0$ and $X\subseteq \dom(\partial f)$ is a nonempty closed convex set.
Let $\Pi_X:\R^n\rightarrow X$ denote the orthogonal projection onto $X$, that is, the mapping defined as follows
\[
y \mapsto \Pi_X(y):=\argmin_{x\in X}\|y-x\|.
\]
We shall say that {\em $ f$ is $\mu$-strongly convex relative to $X$} if for all $(y,v)\in \text{graph}(\partial f)$ and all $u\in \partial f(\Pi_X(y))$
\begin{equation}\label{eq.rel.strong.conv}
\ip{v-u}{y-\Pi_{X}(y)} \ge \mu\|y-\Pi_{X}(y)\|^2.
\end{equation}
It is evident that  if $f$ is $\mu$-strongly convex then
$ f$ is $\mu$-strongly convex relative to any nonempty closed convex set $X\subseteq\dom(\partial f)$.  More generally, Lemma~\ref{lemma.rel.strong} in Section~\ref{sec:genDR} below shows that
if $f$ is of the form $f = h \circ A$ where $h$ is strongly convex and $A$ is a linear mapping, then $f$ is strongly convex relative to $A^{-1}(X)\subseteq \dom(\partial f)$ for any nonempty closed convex set $X\subseteq\dom(\partial h)$.

The above concept of relative strong convexity is closely related to and inspired by recent developments in~\citep{gutman2020condition}.

\begin{theorem}
\label{thm.stronglyConv.bound} Let  $f,g\in \Gamma_0$ be such that~\eqref{eq.primal} and~\eqref{eq.dual} have the same optimal values and they are both attained.  Let $\barX = \argmin_x\{f(x)+g(x)\}$ and $\barU:=\argmin_u\{f^*(u)+g^*(-u)\} = \argmin_u\{f^*(u)+g_*(u)\}$ and $F$ be defined via~\eqref{eq:DRoperator}.
Suppose~$ f$ or~$ g$ is $\mu$-strongly convex relative to $\barX$ for some $\mu > 0$, and $ f^*$ or~$ g_*$ is $\mu^*$-strongly convex relative to $\barU$ for some $\mu^* > 0$.  Then $F$ satisfies the error bound condition~\eqref{eq.error.bound.F} on $S:=\R^n$ for $H = 4(1+ \max(1/\mu,1/\mu^*))$.
\end{theorem}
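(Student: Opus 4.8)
The plan is to verify the error bound directly at an arbitrary $w\in\R^n$ by controlling $\dist(x,\barX)$ and $\dist(u,\barU)$ separately, where $x:=\prox_f(w)$ and $u:=\prox_{f^*}(w)=w-x$ (the last equality by~\eqref{eq.moreau}), and then assembling them via Proposition~\ref{prop.DR.facts}. First I would record the basic identities for the one-step operator. Writing $y:=\prox_g(2x-w)$ and $e:=(I-F)(w)$, definition~\eqref{eq:DRoperator} gives $e=x-y$, while~\eqref{eq:prox->diff} gives $u\in\partial f(x)$, and after substituting $2x-w=x-u$ it also gives $e-u\in\partial g(y)$. Since $w=x+u$ and $\barW=\barX+\barU$ by Proposition~\ref{prop.DR.facts}, we get $\dist(w,\barW)\le\dist(x,\barX)+\dist(u,\barU)$. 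I would also note the standard ``product structure'' of the optimal sets: for every $\bar x\in\barX$ and every $\bar u\in\barU$, strong duality forces equality in the two Fenchel--Young inequalities~\eqref{eq.fenchel-young} applied to $(f,\bar x,\bar u)$ and to $(g,\bar x,-\bar u)$, hence $\bar u\in\partial f(\bar x)$ and $-\bar u\in\partial g(\bar x)$. In particular $\barX$ lies in the domains of $\partial f$ and $\partial g$ and $\barU$ in the domains of $\partial f^*$ and $\partial g_*$ (both sets are nonempty closed convex), so the relative strong convexity hypotheses and the projections $\Pi_\barX,\Pi_\barU$ all make sense.

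The crux is the following primal estimate, which I would prove under the hypothesis that $f$ or $g$ is $\mu$-strongly convex relative to $\barX$: there is a number $p\ge 0$ with $\dist(x,\barX)\le p+\|e\|$ and
\[
\mu\, p^2 \;\le\; \|e\|\bigl(p + \dist(u,\barU)\bigr).
\]
If $f$ is the strongly convex one, take $p:=\dist(x,\barX)$, $\bar x:=\Pi_\barX(x)$ and $\bar u:=\Pi_\barU(u)$. Applying~\eqref{eq.rel.strong.conv} to $(x,u)\in\graph(\partial f)$ together with $\bar u\in\partial f(\bar x)$ yields $\ip{u-\bar u}{x-\bar x}\ge\mu\,\dist(x,\barX)^2$; for an upper bound on the left side, I would use monotonicity of $\partial g$ at $(y,e-u)$ and $(\bar x,-\bar u)$, substitute $y=x-e$, and apply Cauchy--Schwarz, obtaining $\ip{u-\bar u}{x-\bar x}\le\|e\|\,\dist(u,\barU)+\|e\|\,\dist(x,\barX)$. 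Combining the two displays gives the claim. If instead $g$ is the strongly convex one, the same manipulation works with $p:=\dist(y,\barX)$, $\bar x:=\Pi_\barX(y)$, $\bar u:=\Pi_\barU(u)$, using~\eqref{eq.rel.strong.conv} for $(y,e-u)\in\graph(\partial g)$ and $-\bar u\in\partial g(\bar x)$ together with monotonicity of $\partial f$ at $(x,u),(\bar x,\bar u)$; here $\dist(x,\barX)\le\dist(y,\barX)+\|e\|=p+\|e\|$ because $\|x-y\|=\|e\|$.

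By Proposition~\ref{prop.DR.symm} the Douglas--Rachford operator of the dual problem $\min_u\{f^*(u)+g_*(u)\}$ is the same map $F$, its ``primal'' iterate at $w$ is $u=\prox_{f^*}(w)$, its auxiliary point $v$ satisfies $u-v=x-y=e$, and its own Fenchel dual is (up to sign) $\min_x\{f(x)+g(x)\}$, so the set playing the role of ``the dual's dual optimal set'' is $\barX$. Hence the primal estimate applied to the dual problem gives, under the hypothesis that $f^*$ or $g_*$ is $\mu^*$-strongly convex relative to $\barU$, a number $q\ge 0$ with $\dist(u,\barU)\le q+\|e\|$ and $\mu^*q^2\le\|e\|\bigl(q+\dist(x,\barX)\bigr)$. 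Writing $\varepsilon:=\|e\|$, $M:=\max(1/\mu,1/\mu^*)$, $r:=p+q$, and using $\dist(x,\barX)\le p+\varepsilon$ and $\dist(u,\barU)\le q+\varepsilon$, the two estimates become $\mu p^2\le\varepsilon(r+\varepsilon)$ and $\mu^*q^2\le\varepsilon(r+\varepsilon)$, so $p,q\le\sqrt{M\varepsilon(r+\varepsilon)}$ and therefore $r\le 2\sqrt{M\varepsilon(r+\varepsilon)}$, i.e. $r^2-4M\varepsilon r-4M\varepsilon^2\le 0$. Solving this quadratic inequality gives $r\le 2M\varepsilon+2\varepsilon\sqrt{M^2+M}\le(4M+2)\varepsilon$, whence $\dist(w,\barW)\le\dist(x,\barX)+\dist(u,\barU)\le r+2\varepsilon\le 4(1+M)\varepsilon$. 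As $w$ was arbitrary, this is precisely~\eqref{eq.error.bound.F} on $S=\R^n$ with $H=4\bigl(1+\max(1/\mu,1/\mu^*)\bigr)$.

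The step I expect to be most delicate is the primal estimate itself: one must pick the right projection anchors and make sure~\eqref{eq.rel.strong.conv} is applied to a subgradient of $f$ (resp. $g$) at the projected point that is \emph{simultaneously} the negative of a subgradient of $g$ (resp. $f$) there --- this is exactly where the product structure of $(\barX,\barU)$ enters --- and then reorganize the resulting bilinear term so that all cross terms are absorbed into the single quantity $\dist(u,\barU)$. A secondary point requiring care is invoking Proposition~\ref{prop.DR.symm} to avoid doing the argument twice: one must check that the ``dual of the dual'' optimal set really is $\barX$ and that the one-step residual is the same $e$ in both the primal and dual pictures, so that the primal estimate transfers with $\barU$ and $\barX$ interchanged.
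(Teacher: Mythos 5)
Your proposal is correct, and it reaches the paper's exact constant $H=4(1+\max(1/\mu,1/\mu^*))$ by a noticeably different assembly. The paper works with the single scalar quantity $\ip{F(w)-\bar w}{w-F(w)}$, expands it as $\ip{v-\bar u}{\bar x-y}+\ip{u-\bar u}{x-\bar x}$, lower-bounds that sum by a combination of squared distances using relative strong convexity at judiciously projected anchors, and then invokes a standalone vector inequality (Lemma~\ref{lem:techabc} with $a=F(w)-\bar w$, $b=w-F(w)$, $c=x-\bar x$) to extract $\|w-\bar w\|\le H\|w-F(w)\|$; all four sub-cases are handled simultaneously through the $\mu_1,\dots,\mu_4$ bookkeeping. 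You instead decouple from the outset via $\dist(w,\barW)\le\dist(x,\barX)+\dist(u,\barU)$ (using $\barW=\barX+\barU$ and the rectangularity $\bar u\in\partial f(\bar x)$, $-\bar u\in\partial g(\bar x)$ for every pair in $\barX\times\barU$, which the paper also uses implicitly), prove one coupled scalar estimate $\mu p^2\le\|e\|(p+\dist(u,\barU))$ by playing relative strong convexity of one function against plain monotonicity of the other, transfer it to the dual side via the self-duality $\tilde F=F$ of Proposition~\ref{prop.DR.symm} (correctly checking that the residual $u-v$ equals $e$ and that the bidual optimal set is $\barX$), and close with the quadratic inequality $r^2-4M\varepsilon r-4M\varepsilon^2\le 0$. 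I checked the two case analyses of your primal estimate (anchoring at $\Pi_\barX(x)$ when $f$ is relatively strongly convex, at $\Pi_\barX(y)$ when $g$ is, with $e-u\in\partial g(y)$) and the final algebra; both are sound. What each approach buys: the paper's identity-plus-lemma route is more compact once Lemma~\ref{lem:techabc} is available and keeps all cases in one display; your route halves the case analysis by outsourcing the dual half to the symmetry of $F$, and the final step (solving a quadratic in $r=p+q$) is arguably more transparent than the case split inside Lemma~\ref{lem:techabc}, at the price of having to verify the duality transfer carefully — which you do.
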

\begin{proof}
On the one hand, for $w\in \R^n$ let $x,u,y,v$ be as follows
\[
x:=\prox_f(w), u := \prox_{f^*}(w), y := \prox_g(2x-w), v := \prox_{g_*}(w-2x) = -\prox_{g^*}(2x-w).
\]
From~\eqref{eq.moreau} and~\eqref{eq:DRoperator}, it follows that
\begin{equation}
\label{eq.maprel}
w = x+u \text{ and }F(w) = w+y-x = u + y = x+v.
\end{equation}
On the other hand, Proposition~\ref{prop.DR.facts} implies that for any $(\bar x,\bar u) \in\barX \times \barU$ we have
$\bar x = \prox_f(\bar w)$ and $\bar u = \prox_{f^*}(\bar w)$ for $\bar w := \bar x + \bar u \in \barW = \{w: w= F(w)\}.$  Thus from~\eqref{eq.maprel} it follows that
\begin{align}\label{eq.Fw.w}
\ip{F(w)-\bar w}{w-F(w)} &=
\ip{u+y-\bar x - \bar u}{x-y} \notag\\
&=
\ip{u-\bar u}{\bar x-y}+\ip{y-x}{\bar x - y} + \ip{u-\bar u}{x-\bar x}
\notag\\
&= \ip{u+y-x-\bar u}{\bar x-y} + \ip{u-\bar u}{x-\bar x}
\notag \\
& = \ip{v-\bar u}{\bar x-y} + \ip{u-\bar u}{x-\bar x}.
\end{align}
Observe that $(x,u)\in \text{graph}(\partial f)$ or equivalently $(u,x)\in\graph(\partial f^*)$ , and $(y,-v)\in \text{graph}(\partial g)$ or equivalently
$(v,-y)\in\graph(\partial g_*)$.  Also $(\bar x,\bar u)\in \text{graph}(\partial f)$ or equivalently $(\bar u,\bar x)\in \graph(\partial f^*)$, and $(\bar x,-\bar u)\in \text{graph}(\partial g)$ or equivalently $(\bar u,-\bar x)\in\text{graph}(\partial g_*)$.  Thus the convexity of $f$ and $g$, the relative $\mu$-strong convexity of $f$ or $g$, and the relative $\mu^*$-strong convexity of $f^*$ or $g_*$ imply that if we judiciously choose $(\bar x,\bar u) \in \barX\times \barU$ with either $\bar x = \Pi_{\barX}(x)$ or $\bar x = \Pi_{\barX}(y)$ and with either $\bar u = \Pi_{\barU}(u)$ or $\bar u = \Pi_{\barU}(v)$ then
\begin{equation}\label{eq.thmstr.f}
\ip{v-\bar u}{\bar x-y} + \ip{u-\bar u}{x-\bar x}\ge \frac{1}{2}(\mu_1\|x-\bar x\|^2 + \mu_2\|y-\bar x\|^2 + \mu_3\|v-\bar u\|^2 + \mu_4\|u-\bar u\|^2)
\end{equation}
for some $\mu_i \ge 0, \; i=1,2,3,4$ with $\mu_1+\mu_2 \ge \mu$ and $\mu_3+\mu_4 \ge \mu^*$.

Combining~\eqref{eq.Fw.w} and~\eqref{eq.thmstr.f} we obtain
\begin{align*}
&\ip{F(w)-\bar w}{w-F(w)} \\[1ex]
&\ge \frac{1}{2}(\mu_1\|x-\bar x\|^2 + \mu_2\|y-\bar x\|^2 + \mu_3\|v-\bar u\|^2 + \mu_4\|u-\bar u\|^2)\\[1ex]
& =  \frac{1}{2}(\mu_1\|x-\bar{x}\|^2 + \mu_2\|F(w) - w + x - \bar x \|^2 + \mu_3\|F(w)-x-\bar w+\bar x\|^2 + \mu_4\|w - x - \bar w + \bar x\|^2)
\end{align*}
for some $\mu_i \ge 0, \; i=1,2,3,4$ with $\mu_1+\mu_2 \ge \mu$ and $\mu_3+\mu_4 \ge \mu^*$.  Hence by applying the technical Lemma~\ref{lem:techabc} below with $a := F(w)-\bar w, \, b := w-F(w), c := x-\bar x,$ we conclude that
\[
\|w-\bar w\| \le 4\left(1+ \max(1/\mu,1/\mu^*)\right) \|w-F(w)\|.
\]
Therefore the error bound condition~\eqref{eq.error.bound.F} holds on $S:=\R^n$ for $H = 4(1+ \max(1/\mu,1/\mu^*))$.
\end{proof}

\begin{lemma}\label{lem:techabc} Let $a,b,c \in \R^n$, and $\mu_i \ge 0$ for $i=1,\dots,4$ be such that
\[
\ip{a}{b} \ge \frac{1}{2}(\mu_1 \|c\|^2 + \mu_2\|c-b\|^2 + \mu_3\|a-c\|^2 + \mu_4\|a+b-c\|^2).
\]
Then
\begin{equation}\label{eq.lemma}
\|a + b\| \le  4\left(1 +  1/\mu\right)\|b\|
\end{equation}
for $\mu := \min(\mu_1 + \mu_2,\mu_3 + \mu_4)$.
\end{lemma}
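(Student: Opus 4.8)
If $\mu=0$ the claimed bound is vacuous, so assume $\mu>0$; then $\mu_1+\mu_2\ge\mu>0$, $\mu_3+\mu_4\ge\mu>0$, and $M:=\mu_1+\mu_2+\mu_3+\mu_4\ge 2\mu>0$. The plan is to eliminate $c$ from the hypothesis. After rewriting $\|a-c\|=\|c-a\|$ and $\|a+b-c\|=\|c-(a+b)\|$, the right-hand side of the hypothesis is $\tfrac12 D(c)$ for
\[
D(c):=\mu_1\|c\|^2+\mu_2\|c-b\|^2+\mu_3\|c-a\|^2+\mu_4\|c-(a+b)\|^2,
\]
i.e.\ the weighted sum of squared distances from $c$ to the four points $0,b,a,a+b$; in particular $D$ is a strictly convex quadratic in $c$ (since $M>0$), minimized at the weighted centroid $c^\star=\tfrac1M\big((\mu_3+\mu_4)a+(\mu_2+\mu_4)b\big)$. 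I would complete the square in $c$ (equivalently, evaluate $D$ at $c^\star$) to obtain
\[
D(c)\ \ge\ D(c^\star)\ =\ \frac1M\Big(A\|a\|^2+B\|b\|^2+2E\,\ip{a}{b}\Big),
\]
where $A:=(\mu_1+\mu_2)(\mu_3+\mu_4)$, $B:=(\mu_1+\mu_3)(\mu_2+\mu_4)$, and $E:=\mu_1\mu_4-\mu_2\mu_3$; note $A\ge\mu^2>0$ and $B\ge0$. The only point needing care here is collecting the coefficients correctly, which I would verify by a direct expansion.

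Combining this with the hypothesis $\ip{a}{b}\ge\tfrac12D(c)\ge\tfrac12D(c^\star)\ge0$ (so in particular $\ip{a}{b}\ge0$) and multiplying through by $M$ gives
\[
2(M-E)\,\ip{a}{b}\ \ge\ A\|a\|^2+B\|b\|^2\ \ge\ A\|a\|^2\ \ge\ 0.
\]
Together with Cauchy--Schwarz, $\ip{a}{b}\le\|a\|\,\|b\|$, this lets us control $\|a\|$ by $\|b\|$: if $\|a\|=0$ there is nothing to prove, since then $\|a+b\|=\|b\|$; otherwise the display forces $M-E>0$ (because $\ip{a}{b}\ge0$ and the right-hand side is positive), and then $2(M-E)\|a\|\,\|b\|\ge 2(M-E)\ip{a}{b}\ge A\|a\|^2$ yields $\|a\|\le\tfrac{2(M-E)}{A}\|b\|$.

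It remains to bound the constant $\tfrac{2(M-E)}{A}$. Since $\mu_1,\mu_4\ge0$ we have $-E=\mu_2\mu_3-\mu_1\mu_4\le\mu_2\mu_3\le(\mu_1+\mu_2)(\mu_3+\mu_4)=A$, hence $M-E\le M+A$ and
\[
\frac{2(M-E)}{A}\ \le\ \frac{2(M+A)}{A}\ =\ 2+\frac{2M}{A}\ =\ 2+\frac{2}{\mu_1+\mu_2}+\frac{2}{\mu_3+\mu_4}\ \le\ 2+\frac{4}{\mu},
\]
using $\mu_1+\mu_2,\mu_3+\mu_4\ge\mu$. Therefore $\|a\|\le(2+4/\mu)\|b\|$, and finally $\|a+b\|\le\|a\|+\|b\|\le(3+4/\mu)\|b\|\le 4(1+1/\mu)\|b\|$, which is~\eqref{eq.lemma}.

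The step I expect to be the main obstacle is the first one: recognizing that the $c$-dependent part of the hypothesis is exactly a weighted sum of squared distances to the vertices of a parallelogram, so that $c$ can be removed by passing to $\min_c D(c)$. Once that is done the argument is elementary; the only subtlety afterwards is that $M-E$ can be negative for extreme weight configurations (e.g.\ $\mu_2=\mu_3=0$), which is harmless here---it just forces $\|a\|=0$---but is the reason the estimate must absorb some slack and the stated constant is $4(1+1/\mu)$ rather than something sharper.
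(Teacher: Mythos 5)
Your proof is correct, and it takes a genuinely different route from the paper's. You eliminate $c$ exactly by minimizing the weighted sum of squared distances from $c$ to the four points $0,b,a,a+b$ at their weighted centroid; the resulting identity $M\,D(c^\star)=A\|a\|^2+B\|b\|^2+2E\ip{a}{b}$ checks out (e.g.\ via $\min_c\sum_i\mu_i\|c-p_i\|^2=\tfrac1M\sum_{i<j}\mu_i\mu_j\|p_i-p_j\|^2$, whose six terms collect exactly to your $A$, $B$, $E$), and the remaining estimates, including the handling of the degenerate cases $\|a\|=0$ and the sign of $M-E$, are sound. The paper instead keeps $c$ and discards information early: it lower-bounds each weighted pair by $\mu\cdot\min(\cdot,\cdot)$, applies $\|u\|^2+\|v\|^2\ge\tfrac12\|u+v\|^2$ to the four pairings of $\{0,b\}$ with $\{a,a+b\}$ to obtain $\ip{a}{b}\ge\tfrac{\mu}{4}\min(\|a\|^2,\|a-b\|^2)$, and then splits into the cases $\|a\|\le\|a-b\|$ and $\|a\|\ge\|a-b\|$ (in the second case bounding $\|a+b\|$ directly rather than through $\|a\|$). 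The paper's argument is shorter to write down; yours retains the full content of the hypothesis after eliminating $c$ --- the inequality $2(M-E)\ip{a}{b}\ge A\|a\|^2+B\|b\|^2$ is tight over $c$ --- so it would yield sharper constants for particular weight configurations, at the cost of the bookkeeping with $A$, $B$, $E$. Both arguments land on the same constant $4(1+1/\mu)$.
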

\begin{proof} Notice that $\ip {a}{b} \ge 0$ and thus $\|a+b\|^2 \ge \|a-b\|^2$.
We have
\begin{align*}
\ip{a}{b} & \ge \frac{1}{2}(\mu_1 \|c\|^2 + \mu_2\|c-b\|^2 + \mu_3\|a-c\|^2 + \mu_4\|a+b-c\|^2) \\[1ex]
   & \ge \frac{\mu}{2}(\min(\|c\|^2,\|c-b\|^2) +   \min(\|a-c\|^2,\|a+b-c\|^2)\\[1ex]
   & = \frac{\mu}{2} \min(\|c\|^2+\|a-c\|^2,\|c\|^2+\|a+b-c\|^2,\|c-b\|^2+\|a-c\|^2,\|c-b\|^2+\|a+b-c\|^2)\\[1ex]
   & \ge \frac {\mu}4  \min(\|a\|^2,\|a+b\|^2,\|a-b\|^2)\\[1ex]
   & = \frac {\mu}4 \min(\|a\|^2, \|a-b\|^2).
\end{align*}
We now consider two cases separately.  If $\|a\| \le \|a-b\|$, using Cauchy-Schwarz inequality we obtain
$\|a\|\|b\| \ge \ip{a}{b} \ge \tfrac {\mu}4 \|a\|^2$ and so $\|b\| \ge \tfrac {\mu}4 \|a\|$.  The latter inequality in turn implies that
$$\|a + b\| \le \|a\| + \|b\| \le (1 + 4/{\mu})\|b\|\le 4(1 + 1/{\mu})\|b\|$$
and thus~\eqref{eq.lemma} holds in this case.

If $\|a\| \ge \|a-b\|$, we obtain
$\ip{a}{b} \ge \tfrac {\mu}2 \|a-b\|^2 = \tfrac {\mu}4 (\|a+b\|^2 - 4\ip ab)$. Hence $$\|a+b\|^2 \le 4(1 + 1/{\mu})\ip{a}{b} \le 4(1 + 1/{\mu})\ip{a+b}{b} \le 4(1 + 1/{\mu})\|a+b\|\|b\|,$$ and thus~\eqref{eq.lemma} follows in this case as well.
\end{proof}

\subsection{Piecewise linear-quadratic functions.}
\label{sec:PQL}

Another interesting subclass of $\Gamma_0$ (see Section~\ref{sec:prelim}) is the class of {\em piecewise linear-quadratic} (PLQ) functions.  A function in $\Gamma_0$ is PLQ if there exists a finite polyhedral partition of its domain such that in each of its parts the function is quadratic~\citep[see, e.g.,][Sec. 10.E]{rockafellar2009variational}. Further, if in each polyhedral partition the PQL function is affine, the function is  piecewise linear.
The class of PLQ functions is closed under composition with piecewise linear functions and thus
problem~\eqref{eq.primal} restricted to PLQ functions $f$ and $g$ encompasses several important optimization models such as linear optimization, (convex) quadratic optimization, least squares, and some of its variations such as LASSO, elastic net, and support vector machines (SVM).

The class of PLQ functions is also closed under several important convex operators, such as the Moreau envelope and the Fenchel conjugates (see, e.g., \citep{rockafellar2009variational}, \citep[][Prop. 5.1]{lucet2009piecewise}). It can also be seen that the gradient and proximal operators map the class of  PLQ functions to the class of piecewise linear operators (see, e.g., \citep{rockafellar2009variational}, \citep[][Sec. 5]{lucet2009piecewise}).

Since the class of PLQ functions is closed under composition with piecewise linear functions, then when both $f$ and $g$ are PLQ, the mapping $F$ defined via~\eqref{eq:DRoperator} as well as the mapping $I-F$
are piecewise linear mappings.  Thus the generic property of piecewise linear mappings stated in Lemma~\ref{lemma:HpieceWiseLin} below automatically implies that $F$ satisfies the error bound condition~\eqref{eq.error.bound.F} on any set of the form $\{w: \dist(w,\bar W) \le R\}$ when $f$ and $g$ are PLQ.

Lemma~\ref{lemma:HpieceWiseLin} relies on the following kind of {\em relative Hoffman constant}.  Suppose $P\subseteq \R^n$ is a polyhedron and $G:P\rightarrow \R^m$ is affine on $P$ and such that $G^{-1}(0)\cap P \ne \emptyset$.  Then Hoffman's Lemma~\cite{Hoff52,hoffman2003approximate} implies that
\begin{equation}\label{eq.rel.Hoffman}
H(G|P):= \sup_{x\in P \setminus G^{-1}(0)} \frac{\dist(x,G^{-1}(0)\cap P)}{\|G(x)\|} < \infty
\end{equation}
with the convention that $H(G|P) = 0$ when $P\subseteq G^{-1}(0).$
We call $H(G|P)$ the {\em relative Hoffman constant}  of $G$ relative to $P$.  To cover the case $G^{-1}(0)\cap P = \emptyset$ we extend~\eqref{eq.rel.Hoffman} as follows.  For $R > 0$ let
\[
H^R(G|P):= \sup_{x\in P \setminus G^{-1}(0)} \frac{\min\{\dist(x,G^{-1}(0)\cap P),R\}}{\|G(x)\|}
\]
with the convention that $\dist(x,G^{-1}(0)\cap P) = \infty$ when $G^{-1}(0)\cap P = \emptyset$. It is easy to see that
\[
H^R(G|P) = \left\{\begin{array}{rl} H(G|P)<\infty & \text{ when } \; G^{-1}(0)\cap P \ne \emptyset \\ \frac{R}{\min_{w\in P}\|G(w)\|}  < \infty & \text{ when } \; G^{-1}(0)\cap P = \emptyset.\end{array}\right.
\]

We will also rely on the following notation and terminology.  Suppose $P_1,\dots,P_k\subseteq$ are nonempty polyhedra and $G_i:\R^n\rightarrow \R^n, \;i=1,\dots,k$ are affine mappings.  We shall say that $(G_1,P_1),\dots,(G_k,P_k)$ is a {\em compatible collection} if the following two conditions hold:
\begin{itemize}
\item[(i)] $\R^n = P_1\cup\cdots\cup P_k$
\item[(ii)]  $G_i(x) = G_j(x)$ for all $i,j\in \{1,\dots,k\}$ and all $x \in P_i\cap P_j$.
\end{itemize}
When the above two conditions hold, we shall write
$G=\bigcup_{i=1}^k G_i|P_i$ to denote the piecewise linear mapping defined via
$x\in P_i \mapsto G_i(x)$.  Note that every piecewise linear mapping $G$ is of the form $G=\bigcup_{i=1}^k G_i|P_i$ for some compatible collection $(G_1,P_1),\dots,(G_k,P_k)$.

\begin{lemma}\label{lemma:HpieceWiseLin}
Suppose $(G_1,P_1),\dots,(G_k,P_k)$ is a compatible collection and  $G=\bigcup_{i=1}^k G_i|P_i$.
 If  $G^{-1}(0):=\{w \in \R^n: G(w) = 0\} \neq \emptyset$ then for all $w \in \R^n$ with $\dist(w,G^{-1}(0))\le R$ we have
\begin{equation}\label{eq.error.poly}
\dist(w,G^{-1}(0)) \le \left(\max_{j=1,\dots,k} H^R(G_j|P_j) \right)\cdot\|G(w)\|.
\end{equation}
Furthermore, for $R>0$ sufficiently small, the following sharper bound holds
\begin{equation}\label{eq.error.poly.sharper}
\dist(w,G^{-1}(0)) \le \left(\max_{j: P_j\cap G^{-1}(0)\ne \emptyset} H(G_j|P_j) \right)\cdot\|G(w)\|.
\end{equation}
\end{lemma}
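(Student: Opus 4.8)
The plan is to reduce everything to the affine pieces via the identity $G^{-1}(0)=\bigcup_{i=1}^k\bigl(G_i^{-1}(0)\cap P_i\bigr)$, exploiting that $G\equiv G_i$ on $P_i$ (by condition~(ii)), so that each set $G_i^{-1}(0)\cap P_i$ is contained in $G^{-1}(0)$ and each local (relative) Hoffman bound on $(G_i,P_i)$ transfers to a bound against all of $G^{-1}(0)$.

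For~\eqref{eq.error.poly} I would fix $w$ with $\dist(w,G^{-1}(0))\le R$ and pick, using condition~(i), an index $i$ with $w\in P_i$, so that $G(w)=G_i(w)$. If $w\in G_i^{-1}(0)$ then $w\in G^{-1}(0)$ and $G(w)=0$, so both sides vanish. Otherwise, the inclusion $G_i^{-1}(0)\cap P_i\subseteq G^{-1}(0)$ together with the hypothesis $\dist(w,G^{-1}(0))\le R$ gives
\[
\dist(w,G^{-1}(0))\le\min\bigl\{\dist(w,G_i^{-1}(0)\cap P_i),\,R\bigr\},
\]
and the definition of $H^R(G_i|P_i)$ bounds the right-hand side by $H^R(G_i|P_i)\,\|G_i(w)\|=H^R(G_i|P_i)\,\|G(w)\|\le\bigl(\max_j H^R(G_j|P_j)\bigr)\|G(w)\|$. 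This argument is uniform in whether $G_i^{-1}(0)\cap P_i$ is empty: when it is, the displayed minimum is simply $R$, which is still $\ge\dist(w,G^{-1}(0))$ by hypothesis, while $H^R(G_i|P_i)\|G(w)\|\ge R$ by the closed-form value of $H^R$ recorded above (since $\|G_i(w)\|\ge\min_{w'\in P_i}\|G_i(w')\|$).

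For the sharper bound~\eqref{eq.error.poly.sharper} the key point is that, for $R$ small enough, $\dist(w,G^{-1}(0))\le R$ can only hold for points $w$ lying in pieces $P_i$ that genuinely meet $G^{-1}(0)$. To make this precise I would note that if $P_i\cap G^{-1}(0)=\emptyset$ then $P_i$ is a nonempty polyhedron disjoint from each polyhedron $G_j^{-1}(0)\cap P_j$; since the distance between two disjoint polyhedra is strictly positive (the convex quadratic $(p,q)\mapsto\|p-q\|^2$ attains its minimum over the polyhedron $P_i\times(G_j^{-1}(0)\cap P_j)$ by the Frank--Wolfe theorem), one gets $\dist(P_i,G^{-1}(0))=\min_j\dist\bigl(P_i,G_j^{-1}(0)\cap P_j\bigr)>0$. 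Let $R_0>0$ be the minimum of these finitely many positive numbers over all $i$ with $P_i\cap G^{-1}(0)=\emptyset$ (and $R_0=+\infty$, with~\eqref{eq.error.poly.sharper} reducing to~\eqref{eq.error.poly}, if no such $i$ exists). Then for any $R<R_0$ and any $w$ with $\dist(w,G^{-1}(0))\le R$, every $i$ with $w\in P_i$ satisfies $G_i^{-1}(0)\cap P_i=P_i\cap G^{-1}(0)\ne\emptyset$, so Hoffman's Lemma~\eqref{eq.rel.Hoffman} applies on $(G_i,P_i)$ and, combined with $G_i^{-1}(0)\cap P_i\subseteq G^{-1}(0)$, yields $\dist(w,G^{-1}(0))\le\dist(w,G_i^{-1}(0)\cap P_i)\le H(G_i|P_i)\|G(w)\|\le\bigl(\max_{j:\,P_j\cap G^{-1}(0)\ne\emptyset}H(G_j|P_j)\bigr)\|G(w)\|$.

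The only genuinely non-routine step is the strict positivity of $\dist(P_i,G^{-1}(0))$ for the pieces missing $G^{-1}(0)$; this is where finiteness of the collection and polyhedrality are essential (via attainment of a quadratic minimum over a polyhedron). Everything else is bookkeeping driven by the inclusion $G_i^{-1}(0)\cap P_i\subseteq G^{-1}(0)$ and the definitions of $H(\cdot|\cdot)$ and $H^R(\cdot|\cdot)$.
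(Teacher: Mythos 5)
Your proof is correct. For the first bound \eqref{eq.error.poly} your argument is essentially the paper's: pick a piece $P_i$ containing $w$, use the inclusion $G_i^{-1}(0)\cap P_i\subseteq G^{-1}(0)$ together with the hypothesis $\dist(w,G^{-1}(0))\le R$ to dominate $\dist(w,G^{-1}(0))$ by $\min\{\dist(w,G_i^{-1}(0)\cap P_i),R\}$, and invoke the definition of $H^R(G_i|P_i)$. For the sharper bound \eqref{eq.error.poly.sharper} you take a genuinely different route. The paper works entirely at the level of the constants: it writes $\max_j H^R(G_j|P_j)$ as the maximum of $\max_{j:P_j\cap G^{-1}(0)\ne\emptyset}H(G_j|P_j)$ and the terms $R/\epsilon_j$ with $\epsilon_j=\min_{w\in P_j}\|G(w)\|>0$ for the pieces missing $G^{-1}(0)$, and observes that the latter terms are dominated once $R$ is small, so the $R$-truncated constant itself collapses to the sharper one. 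You instead shrink the set of relevant points: for $R$ below the (positive) distance from each piece missing $G^{-1}(0)$ to $G^{-1}(0)$, any $w$ with $\dist(w,G^{-1}(0))\le R$ must lie in a piece that meets $G^{-1}(0)$, where the untruncated Hoffman constant applies directly. Your route needs one extra geometric ingredient --- strict positivity of the distance between disjoint polyhedra, which you correctly justify via attainment of the quadratic minimum (equivalently, closedness of Minkowski sums of polyhedra) --- but it is arguably more robust: it covers cleanly the degenerate situation where every piece meeting $G^{-1}(0)$ is entirely contained in it (so the sharper constant is $0$), a case in which the paper's claimed identity between the two maxima does not literally hold even though the conclusion does. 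Both arguments are short and correct.
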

\begin{proof} 
The construction of $H^R(G_j|P_j), \; j=1,\dots,k$ readily implies that for all $w \in P_j$ with $\dist(w,G^{-1}(0))\le R$
\[
\dist(w,G^{-1}(0)) \le H^R(G_j|P_j)\cdot \|G(w)\|.
\]
Therefore~\eqref{eq.error.poly} follows.  Furthermore, observe that
\[
\max_{j=1,\dots,k} H^R(G_j|P_j) = \max\left\{\max_{j: P_j\cap G^{-1}(0)\ne \emptyset} H(G_j|P_j), \max_{j: P_j\cap G^{-1}(0)= \emptyset} \frac{R}{\epsilon_j} \right\}
\]
for $\epsilon_j := \min_{w\in P_j}\|G(w)\|$ which is strictly positive whenever $P_j\cap G^{-1}(0) = \emptyset$.  Hence for $R>0$ sufficiently small we have
\[
\max_{j=1,\dots,k} H^R(G_j|P_j) = \max_{j: P_j\cap G^{-1}(0)\ne \emptyset} H(G_j|P_j)
\]
and thus~\eqref{eq.error.poly.sharper} follows.
\end{proof}

The following result is an immediate consequence of Lemma~\ref{lemma:HpieceWiseLin}.

\begin{theorem}\label{thm:HpieceWiseLin} Suppose $f,g$ are PLQ.  Consider the problems~\eqref{eq.primal},~\eqref{eq.dual}, and the  Douglas-Rachford operator $F$ defined via~\eqref{eq:DRoperator}.  if $\barW = \{w: w= F(w)\}\ne \emptyset$ then $I-F =\bigcup_{i=1}^k G_i|P_i$ for a compatible
 collection $(G_1,P_1),\dots,(G_k,P_k)$.  Consequently for all $R>0$ the error bound condition~\eqref{eq.error.bound.F} holds on $S:=\{w\in \R^n:\dist(w,\barW) \le R\}$ for
\[
H = \max_{j=1,\dots,k} H^R(G_j|P_j)
\]
and for
\[
H = \max_{j: P_j\cap \barW \ne \emptyset} H(G_j|P_j)
\]
when $R>0$ is sufficiently small
\end{theorem}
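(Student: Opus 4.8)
The plan is to reduce the statement to Lemma~\ref{lemma:HpieceWiseLin} by exhibiting $I-F$ as a piecewise linear mapping given by a compatible collection. The starting point is the fact recalled in the text (see \citep{rockafellar2009variational} and \citep[Sec.~5]{lucet2009piecewise}) that the proximal mapping of a PLQ function is piecewise linear: for a PLQ $h$ there is a finite polyhedral partition $\R^n = Q_1\cup\cdots\cup Q_m$ and affine maps $L_1,\dots,L_m$ with $\prox_h(x) = L_i(x)$ for $x\in Q_i$, and $L_i$, $L_j$ agreeing on $Q_i\cap Q_j$; that is, $\{(L_i,Q_i)\}$ is a compatible collection. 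Since $f$ and $g$ are PLQ, both $\prox_f$ and $\prox_g$ are piecewise linear in this sense.

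Next I would check that piecewise linear mappings given by compatible collections are closed under the operations that assemble $F$ from $\prox_f$ and $\prox_g$: affine pre- and post-composition, sums, and composition. For composition this is the only point requiring care. If $\prox_f = \bigcup_i L_i|Q_i$ and $\prox_g = \bigcup_j M_j|R_j$, then $w\mapsto \prox_g(2\prox_f(w)-w)$ is affine on each nonempty cell $Q_i \cap (2L_i - I)^{-1}(R_j)$, which is a polyhedron since $2L_i - I$ is affine and $R_j$ polyhedral; these cells cover $\R^n$, there are finitely many of them, and on overlaps the induced affine formulas coincide because the two original collections are compatible. Intersecting these cells once more with the $Q_i$ and using $F(w) = w + \prox_g(2\prox_f(w)-w) - \prox_f(w)$, one obtains a finite compatible collection $(G_1,P_1),\dots,(G_k,P_k)$ with $I-F = \bigcup_{i=1}^k G_i|P_i$; each $G_i$ is affine, being a difference of the identity and affine pieces, and compatibility is inherited.

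Finally, since $(I-F)(w)=0$ if and only if $w = F(w)$, we have $(I-F)^{-1}(0) = \barW$, nonempty by hypothesis. Applying Lemma~\ref{lemma:HpieceWiseLin} with $G = I-F$ (so $G^{-1}(0)=\barW$) gives, for every $R>0$ and every $w$ with $\dist(w,\barW)\le R$, the inequality $\dist(w,\barW) \le \bigl(\max_{j=1,\dots,k} H^R(G_j|P_j)\bigr)\cdot\|(I-F)(w)\|$, which is exactly the error bound condition~\eqref{eq.error.bound.F} on $S = \{w : \dist(w,\barW)\le R\}$ with $H = \max_{j=1,\dots,k} H^R(G_j|P_j)$; the sharper bound~\eqref{eq.error.poly.sharper} of the lemma yields the second value of $H$ once $R>0$ is small. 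I expect the only genuinely delicate step to be the bookkeeping in the closure-under-composition argument — confirming that the refined cells are polyhedral, finitely many, cover $\R^n$, and carry matching affine formulas on all overlaps, so that the outcome is an honest compatible collection and not merely a piecewise-defined map; everything else is a direct invocation of the PLQ calculus and Lemma~\ref{lemma:HpieceWiseLin}.
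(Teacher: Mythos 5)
Your proposal is correct and follows essentially the same route as the paper, which presents the theorem as an immediate consequence of Lemma~\ref{lemma:HpieceWiseLin} together with the fact that proximal mappings of PLQ functions are piecewise linear and that this class is closed under the operations assembling $F$. The only difference is that you spell out the composition/refinement bookkeeping that the paper leaves implicit, and that bookkeeping is carried out correctly.
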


The following example gives explicit expressions for the compatible collection and relative Hoffman constants in Theorem~\ref{thm:HpieceWiseLin} for a special case.

\begin{example} Suppose $Q\in\R^{n\times n}$ is symmetric positive definite and consider the problem
\begin{equation}\label{eq.quad}
\begin{array}{rl}
\dmin_x & \frac{1}{2}x\transp Q x + c\transp x\\
& x \ge 0.
\end{array}
\end{equation}
This is a particular instance of~\eqref{eq.primal0} for $f(x) = \delta_{\R^n_+}$ and $g(x) = \frac{1}{2}x\transp Q x + c\transp x$. In this case some straightforward calculations  show that
\[
(I-F)(w) = (I+Q)^{-1}(w^- + Qw^++c).
\]
where $w^+ =\max(w,0)$ and $w^- = w-w^+ = \min(w,0)$.  In particular,
\[
\barW = \{w\in \R^n: w^- + Q w^+ + c = 0\}
\]
Thus $I-F = \bigcup_{J\subseteq\{1,\dots,n\}} G_J|P_J$ where $P_J$ and $G_J$ are as follows. For each $J\subseteq\{1,\dots,n\}$ let $P_J := \{w\in\R^n: w_J\ge 0, \; w_{J^c} \le 0\}$ and let $G_J(w):=(I+Q)^{-1}(M_Jw+c)$ where $M_J\in\R^{n\times n}$ is \[
M_J:=\matr{0&0\\0&I_{J^c}} + \matr{Q_J & 0}.
\]
Theorem~\ref{thm:HpieceWiseLin} implies that for~\eqref{eq.quad} the error bound condition~\eqref{eq.error.bound.F} holds on $\{w: \dist(w,\barW)\le R\}$ for
\[
H:=\max_{J\subseteq\{1,\dots,n\}} H^R(G_J|P_J).
\]
It is easy to see that  when $P_J\cap\barW \ne \emptyset$
\begin{equation}\label{eq.HJ}
H^R(G_J|P_J) = H(G_J|P_J) \le \|\left((I+Q)^{-1}M_J\right)^{-1}\| = \|M_J^{-1}(I+Q)\|.
\end{equation}
Similarly, when $P_J\cap\barW = \emptyset$ we can bound $H^R(G_J|P_J)$ in terms of the gap between~\eqref{eq.quad} and its dual
\begin{equation}\label{eq.quad.dual}
\begin{array}{rl}
\dmax_s & -\frac{1}{2}(s+c)\transp Q^{-1} (s+c)\\
& s \le 0.
\end{array}
\end{equation}
To that end, for $J\subseteq \{1,\dots,n\}$ let
\begin{align*}
\text{\sf gap}_J &:= \min\left\{\frac{1}{2}(w^-+Qw^++c)\transp Q^{-1}(w^-+Qw^++c):  w\in P_J\right\}
\\ &
= \min\left\{\frac{1}{2}\|Q^{-1/2}(M_Jw+c)\|^2:  w\in P_J\right\}.
\end{align*}
Observe that $\text{\sf gap}_J$ is the smallest duality gap between~\eqref{eq.quad} and~\eqref{eq.quad.dual} when $x$ and $s$ are restricted to be of the form $x = w^+$ and $s= w^-$ for $w\in P_J$.  In particular, $P_J\cap \barW\ne \emptyset$ if and only if $\text{\sf gap}_J=0.$
Thus when $P_J\cap\barW = \emptyset$ we readily get
$
\|Q^{-1/2}(M_Jw+c)\|^2 \ge 2 \cdot\text{\sf gap}_J > 0
$
for $w\in P_J$ and consequently
\begin{align}\label{eq.epsilonJ}
H^R(G_J|P_J) &=  \max_{w\in P_J}\frac{R}{\|G_J(w)\|}  \notag  \\
&= \max_{w\in P_J} \frac{R}{\|(I+Q)^{-1}Q^{1/2}Q^{-1/2}(M_Jw+c)\|} \notag \\[1ex]
&\le \|Q^{-1/2}(I+Q)\| \cdot \frac{R}{\sqrt{2\cdot \text{\sf gap}_J}}.
\end{align}
Consider the special case when $Q = \text{diag}(d)$ for some $d \in \R^n_{++}$.  For $J\subseteq\{1,\dots,n\}$ we have $P_J\cap \barW\ne \emptyset$ if and only if
$\{j: c_j < 0\} \subseteq J \subseteq\{j:c_j \le 0\}$  and equation~\eqref{eq.HJ} yields
\[
H(G_J|P_J) \le 1 + \max\left\{\max_{j\in J} \frac{1}{d_j},\max_{j\not\in J}d_j\right\},\]
and a more detailed calculation shows that indeed the equality holds.
On the other hand,  $P_J\cap \barW = \emptyset$ if and only if either $c_j > 0$ for some $j\in J$ or $c_j < 0$ for some $j\not \in J$ and in that case~\eqref{eq.epsilonJ} yields
\[
H^R(G_J|P_J) \le \max_{j=1,\dots,n}\frac{1+d_j}{\sqrt{d_j}}\cdot \max\left\{\frac{R\cdot\sqrt{d_j}}{|c_j|}:  (j\in J \text{ and } c_j > 0) \text{ or }
(j\not \in J \text{ and } c_j<0)\right\}.
\]
In this special case the diagonal structure of $Q$ allows us to compute the sharper expression
\[
H^R(G_J|P_J) = \max_{w\in P_J} \frac{R}{\|G_J(w)\|} = \max\left\{\frac{R\cdot(1+d_j)}{|c_j|}: (j\in J \text{ and } c_j > 0) \text{ or }
(j\not \in J \text{ and } c_j<0)\right\}.
\]

\end{example}

\section{Intersection of closed convex sets.}
\label{sec.intersections}
This section develops our linear convergence approach to the Douglas-Rachford algorithm when applied to the following feasibility problem
\begin{equation}\label{eq.intersection}
\text{ find } x \in A\cap B,
\end{equation}
where $A,B\subseteq \R^n$ are non-empty closed convex sets. Problem~\eqref{eq.intersection} can be written in form~\eqref{eq.primal} as
\[
\min_x \delta_A(x) +\delta_B(x),
\]
where $\delta_A$ and $\delta_B$ respectively denote the indicator functions of $A$ and $B$.
Note that if $S \subseteq \R^n$ is a non-empty closed convex set, then $\delta_S \in \Gamma_0$. Further, for any convex set $S \subseteq \R^n$ we have $\prox_{\delta_S}(w) = \Pi_S(w)$, where $\Pi_S$ denotes the orthogonal projection onto $S$. Furthermore, for  any convex set $S \subseteq \R^n$ we have $R_S(w) = 2\Pi_S(w)-w$, where $R_S$ denotes the reflection mapping in $S$.
It thus follows from~\eqref{eq:DRoperator}, that the one-step iteration mapping $F:\R^n \to \R^n$
of the Douglas-Rachford algorithm applied to~\eqref{eq.intersection} is
\begin{equation}
\label{eq:opersetint}
F(w) = \frac{1}{2}(w + R_B(R_A(w))).
\end{equation}

We next establish the error bound condition~\eqref{eq.error.bound.F}
for $F$ as in~\eqref{eq:opersetint} and thereby the linear convergence of the Douglas-Rachford algorithm for the feasibility problem~\eqref{eq.intersection} for three special cases.
First, we consider the case when $A$ and $B$ are non-empty closed convex cones (see Section~\ref{sec:justcones}). Second, we strengthen these results to the case when $A$ is a linear subspace and $B = \R^n_+$ (see Section~\ref{sec.linear.orthant}). Third, we derive analogous results for the case in which $A$ is an affine subspace, and $B = \R^n_+$ (see Section~\ref{sec:affine}).

We will rely on the following  measure of well-posedness of problem~\eqref{eq.intersection}.  Suppose
$A,B\subseteq \R^n$ are closed convex sets such that $A\cap B\ne \emptyset.$
We shall say that $B$ is {\em substransversal relative to} $A$  if
\begin{equation}\label{eq.Hoffman.set}
\S(B|A):=\sup_{x\in A\setminus B}\frac{\dist(x,A\cap B)}{\dist(x,B)} < \infty
\end{equation}
with the convention that $\S(B|A) = 0$ when $A\subseteq B$.
When $\S(B|A)<\infty$, we call $\S(A|B)$ the {\em modulus of subtransversality of $B$ relative to $A$.}  The above terminology is motivated by the concept of {\em subtransversality} as discussed in~\cite[Chapter 7]{ioffe2017variational}.  We will rely on the next proposition that gives generic sufficient conditions for $\S(B|A)$ to be finite.

\begin{proposition}\label{prop.rel.Hoffman.set}
Suppose $A,B\subseteq \R^n$ are closed convex sets such that $A\cap B\ne\emptyset$.  Then $\S(B|A)<\infty$ in the following cases:
\begin{enumerate}[label=(\roman*)]
\item If $A$ and $B$ are polyhedra.
\label{item:oneHoffman}
\item If $A$ and $B$ are cones and $\relint(A)\cap\relint(B) \ne \emptyset$.
\label{item:twoHoffman}
\item If $A$ and $B$ are cones and $A\cap B = \{0\}$.
\end{enumerate}
\end{proposition}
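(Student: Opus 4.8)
The plan is to handle the three cases separately, with a common conical-homogenization step driving (ii) and (iii). For cones $A,B$ the set $A\cap B$ is a closed convex cone containing $0$, so both $x\mapsto\dist(x,A\cap B)$ and $x\mapsto\dist(x,B)$ are positively homogeneous of degree one; hence the ratio in~\eqref{eq.Hoffman.set} depends only on the direction of $x$ and
\[
\S(B|A)=\sup\Big\{\,\tfrac{\dist(x,A\cap B)}{\dist(x,B)}\;:\;x\in A,\ \|x\|=1,\ x\notin B\,\Big\}.
\]
If $A\subseteq B$ then $\S(B|A)=0$ by convention; otherwise the set above is nonempty (normalize any $x_0\in A\setminus B$, noting $x_0\ne 0$ since $0\in B$). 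In case~(iii), $A\cap B=\{0\}$ forces $x\notin B$ for every $x\in A$ with $\|x\|=1$ (else $x\in A\cap B=\{0\}$), so the continuous map $x\mapsto\dist(x,B)$ is strictly positive on the compact set $\{x\in A:\|x\|=1\}$ and attains there a positive minimum $\delta$; since $0\in A\cap B$ gives $\dist(x,A\cap B)\le\|x\|=1$, we conclude $\S(B|A)\le 1/\delta<\infty$.

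In case~(i), write $A=\{x:Cx\le c\}$ and $B=\{x:Dx\le d\}$, so that $A\cap B=\{x:Cx\le c,\ Dx\le d\}$ is nonempty. Hoffman's lemma~\citep{Hoff52,hoffman2003approximate} yields a finite $H$ with $\dist(x,A\cap B)\le H\,\|((Cx-c)_+,(Dx-d)_+)\|$ for all $x$, and for $x\in A$ the first block vanishes, leaving $\dist(x,A\cap B)\le H\,\|(Dx-d)_+\|$. For such $x$, taking $b:=\Pi_B(x)\in B$ we have $Db\le d$, hence $(Dx-d)_+\le|D(x-b)|$ entrywise and $\|(Dx-d)_+\|\le\|D\|\cdot\dist(x,B)$. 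Therefore $\S(B|A)\le H\,\|D\|<\infty$.

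Case~(ii) is the substantive one. After the homogenization step it suffices to bound $\phi(x):=\dist(x,A\cap B)/\dist(x,B)$ on $\{x\in A:\|x\|=1\}\setminus B$, where $\phi$ is continuous; the only way $\sup\phi$ could be infinite is for $\phi(x)$ to blow up as $x$ approaches the compact set $\{x\in A\cap B:\|x\|=1\}$. Hence it is enough to prove a local estimate: for each $\bar x\in A\cap B$ with $\|\bar x\|=1$ there are a neighborhood $U$ of $\bar x$ and a constant $\kappa_{\bar x}$ with $\dist(x,A\cap B)\le\kappa_{\bar x}\,\dist(x,B)$ for all $x\in A\cap U$. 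Given this, cover $\{x\in A\cap B:\|x\|=1\}$ by finitely many such neighborhoods; on the (compact) remainder of $\{x\in A:\|x\|=1\}$ the continuous function $\phi$ is bounded, and combining the bounds gives $\S(B|A)<\infty$. The local estimate is exactly subtransversality of the pair $\{A,B\}$ at $\bar x$, which holds at every point of $A\cap B$ precisely because $\relint A\cap\relint B\ne\emptyset$~\citep[see, e.g.,][Ch.~7]{ioffe2017variational}: fixing $z\in\relint A\cap\relint B$, for $x\in A$ near $\bar x$ one moves from $x$ toward $z$ and uses that $z$ lies in the relative interiors to show that the segment enters $A\cap B$ after a parameter of size $O(\dist(x,B))$, which produces a point of $A\cap B$ within $O(\dist(x,B))$ of $x$.

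I expect this last local step in case~(ii) to be the main obstacle. Since $z$ belongs only to the \emph{relative} interiors of $A$ and $B$, the move of $x$ toward $z$ must be analyzed inside $\operatorname{span}A$ and $\operatorname{span}B$, and one has to control the components of $x$ orthogonal to these subspaces---which are themselves $O(\dist(x,B))$---when verifying that the perturbed point lands in $B$; this bookkeeping is the technical heart of local metric regularity under a relative-interior qualification. If one prefers to bypass it, the same conclusion follows from the classical fact that $\{A,B\}$ is boundedly linearly regular whenever $\relint A\cap\relint B\ne\emptyset$, which together with the homogenization step (the regularity constant being uniform over $\R^n$ by conical scaling) immediately gives $\S(B|A)<\infty$; by comparison, cases~(i) and~(iii) are routine.
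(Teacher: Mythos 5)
Your proofs of (i) and (iii) are correct and essentially coincide with the paper's: (iii) is the same compactness argument ($\dist(\cdot,B)$ attains a positive minimum on the unit sphere of $A$ while $\dist(x,A\cap B)=\|x\|$ there), and (i) is a correct fleshing-out of what the paper dismisses as an immediate consequence of Hoffman's lemma. Case (ii), which you rightly identify as the substantive one, is where your argument is incomplete. The reduction via homogenization and covering the compact set $A\cap B\cap S^{n-1}$ is sound, but the local subtransversality estimate on which everything rests --- the step you yourself call ``the technical heart'' --- is only sketched, and the sketch of moving $x$ toward a point $z\in\relint(A)\cap\relint(B)$ is exactly where the work lies; as written this is an announcement of a proof rather than a proof. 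The fallback of citing bounded linear regularity under the relative-interior qualification is a true classical fact in finite dimensions and would close the argument, but it outsources the whole content of (ii) to an external theorem.

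For comparison, the paper closes (ii) with a short, self-contained, \emph{global} argument that exploits the conic structure and needs no local analysis or covering. Since $\relint(A)\cap\relint(B)\ne\emptyset$, one has $0\in\relint(A)-\relint(B)=\relint(A-B)$, and a convex cone whose relative interior contains the origin equals its span; hence $M:=A-B$ is a linear subspace. The convex set $D:=\{x-y:(x,y)\in A\times B,\ \|x\|\le 1\}$ contains $0$ and absorbs $M$ (by homogeneity every $z\in M$ lies in $tD$ for some $t>0$), so it contains a relative ball $\{z\in M:\|z\|\le\rho\}$ for some $\rho>0$. Given $x\in A\setminus B$, put $z:=x-\Pi_B(x)$, so that $\|z\|=\dist(x,B)$ and $-z\in M$; rescaling the absorption property yields $(\tilde x,\tilde y)\in A\times B$ with $-z=\tilde x-\tilde y$ and $\|\tilde x\|\le\dist(x,B)/\rho$, whence $x+\tilde x=\Pi_B(x)+\tilde y\in A\cap B$ and $\dist(x,A\cap B)\le\|\tilde x\|\le\dist(x,B)/\rho$. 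This gives $\S(B|A)\le 1/\rho$ directly; adopting it would make your case (ii) complete without appealing to regularity theory.
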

\begin{proof}
We prove each of the cases next.
\begin{enumerate}[label=(\roman*), wide, align=left, leftmargin=0pt, labelindent=0pt]
\item
This is an immediate consequence of Hoffman's Lemma~\cite{Hoff52,hoffman2003approximate}.
\item Since $A$ and $B$ are cones and $\relint(A)\cap\relint(B) \ne \emptyset$, it follows that
\[
A-B:=\{x-y: (x,y)\in A\times B\}
\]
is a linear subspace. Consequently there exists $\rho > 0$ such that
\[
z\in A-B, \|z\|\le \rho \Rightarrow z\in \{x-y: (x,y)\in A\times B, \|x\|\le 1\}.
\]
To finish, it suffices to show that $\dist(x,A\cap B) \le \dist(x,B)/\rho$ for all $x\in A\setminus B$.  Suppose $x\in A\setminus B$ and let $x^B:=P_B(x)$ and $z = x-x^B$.  Observe that $\|-z\| = \|x-x^B\| = \dist(x,B)$ and thus there exists $(\tilde x,\tilde y)\in A\times B$ such that $-z = \tilde x - \tilde y$ and $\|\tilde x\| \le \dist(x,B)/\rho$.  Hence $0 = z-z = x-x^B + \tilde x - \tilde y= (x+\tilde x) - (x^B+\tilde y)$ with $(x+\tilde x,x^B+\tilde y)\in A\times B$.  Therefore $x+\tilde x \in A\cap B$ and $\|\tilde x\| \le \dist(x,B)/\rho$.  Thus $\dist(x,A\cap B) \le \|\tilde x\| \le \dist(x,B)/\rho$.

\item Since $A\cap B = \{0\}$ it follows that
\[
\rho:=\min_{x\in A: \|x\|=1} \dist(x,B) > 0.
\] Therefore for all $x\in A$
\[
\dist(x,A\cap B) = \dist(x,\{0\}) = \|x\| \le \frac{\dist(x,B)}{\rho}.
\]
and consequently $\S(B|A) \le 1/\rho$.

\end{enumerate}
\end{proof}

\subsection{Intersection of closed convex cones.}
\label{sec:justcones}
We next discuss the case when the two sets in~\eqref{eq.intersection} are closed convex cones $C,K \subseteq \R^n$. Consider the feasibility problem
\begin{equation}\label{eq.feas.primal.gral}
\text{ find } x \in C\cap K.
\end{equation}
This problem can be rewritten as
\begin{equation}
\label{eq:intercone}
\min_x  \delta_C(x) +  \delta_K(x).
\end{equation}
In what follows, for any cone $S \subseteq \R^n$, the set $S^* := \{x \in \R^n: \langle x, x' \rangle \ge 0 \text{ for all $x' \in S$}\}$ denotes the dual cone of $S$, and $S^\circ = -S^*$ denotes the polar of $S$. Then, the Fenchel dual of~\eqref{eq:intercone} is
\[
\max_u -\delta_C^*(u) - \delta_K^*(-u)\Leftrightarrow \min_u \delta_{C^\circ}(u) + \delta_{K^*}(u),
\]
which is equivalent to the following alternative feasibility problem
\begin{equation}\label{eq.feas.dual.gral}
\text{ find } u \in C^\circ\cap K^*.
\end{equation}
The primal-dual symmetry between problems~\eqref{eq.feas.primal.gral} and~\eqref{eq.feas.dual.gral} and the primal-dual of the Douglas-Rachford algorithm (Proposition~\ref{prop.DR.symm}) is reflected in the properties and results developed below.

Each iteration of the Douglas-Rachford algorithm applied to the pair of feasibility problems~\eqref{eq.feas.primal.gral} and~\eqref{eq.feas.dual.gral} performs a reflection in $C$ and a reflection in $K$.  This sequence of operations is equivalent to a reflection in $C^\circ$ and a reflection in $K^*$.  All of these reflections are defined by projections onto the pairs $(C,C^\circ)$ and  $(K,K^\circ)$ as we detail next.

For $z\in \R^n$ let $z^C:=\Pi_C(z)$ and $z^{C^\circ}:=\Pi_{C^\circ}(z)$.   From~\eqref{eq.moreau}, it follows that for all $z\in \R^n$ we have
\begin{equation}
\label{eq:conedecomp}
z = z^C + z^{C^\circ}  = z^{C^*} + z^{-C}
\end{equation}
and thus, the reflection of $z$ in $C$ is $R_C(z) = z^C - z^{C^\circ}$.  Likewise for the cone $K$.

The Douglas-Rachford algorithm applied to the above problems \eqref{eq.feas.primal.gral} and \eqref{eq.feas.dual.gral} can be written as
\[
w_{k+1} = F(w_k),
\]
where it follows from \eqref{eq:opersetint}, that the one-step iteration mapping $F:\R^n \rightarrow \R^n$ is given by
\begin{equation}\label{eq.F.gral}
F(w) = \frac 12 (w+R_K(R_C(w)) = \frac 12
(w + (w^C-w^{C^\circ})^K-(w^C-w^{C^\circ})^{K^\circ}).
\end{equation}

\begin{proposition}\label{prop.fixed.gral}
Let $C,K \subseteq \R^n$ be closed convex cones. Then
the set of fixed points of the mapping $F$ in~\eqref{eq.F.gral} is
\begin{equation}\label{eq.fixed.set}\barW = (C \cap K) + (C^\circ \cap K^*).\end{equation}
In particular, $\barW = C \cap K$ if $\relint(C) \cap \relint(K) \ne \emptyset$ and $\barW = C^\circ \cap K^*$ if $\relint(C^\circ) \cap \relint(K^*) \ne \emptyset$.
\end{proposition}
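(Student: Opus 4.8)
The plan is to read Proposition~\ref{prop.fixed.gral} off Proposition~\ref{prop.DR.facts} by specializing it to the indicator functions $f=\delta_C$ and $g=\delta_K$. First I would record the translation dictionary: for a closed convex set $S$ one has $\prox_{\delta_S}=\Pi_S$, and for a closed convex cone $S$ one has $\delta_S^*=\delta_{S^\circ}$, hence $\delta_S^*(-\cdot)=\delta_{S^*}$. Substituting $f=\delta_C$, $g=\delta_K$ into~\eqref{eq:DRoperator} therefore produces exactly the mapping~\eqref{eq.F.gral} (this identification was already carried out via~\eqref{eq:opersetint}). Next I would observe that~\eqref{eq:intercone} and its Fenchel dual $\min_u\{\delta_{C^\circ}(u)+\delta_{K^*}(u)\}$ are both feasible with optimal value $0$, since $0\in C\cap K$ and $0\in C^\circ\cap K^*$, and that this value is attained, precisely on $\barX=C\cap K$ and on $\barU=C^\circ\cap K^*$ respectively. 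Thus the hypotheses of Proposition~\ref{prop.DR.facts} hold, so $\barW\neq\emptyset$ and~\eqref{eq.fixed.all} gives $\barW=\barX+\barU=(C\cap K)+(C^\circ\cap K^*)$, which is~\eqref{eq.fixed.set}.

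For the two ``in particular'' statements I would first isolate an orthogonality fact: if $x\in C\cap K$ and $u\in C^\circ\cap K^*$ then $\ip ux\le 0$ (because $u\in C^\circ$ and $x\in C$) and $\ip ux\ge 0$ (because $u\in K^*$ and $x\in K$), so $\ip ux=0$. Since $0$ lies in both cones, $(C\cap K)+(C^\circ\cap K^*)$ collapses to $C\cap K$ whenever $C^\circ\cap K^*=\{0\}$, and to $C^\circ\cap K^*$ whenever $C\cap K=\{0\}$. It then remains to show that $\relint C\cap\relint K\neq\emptyset$ forces $C^\circ\cap K^*=\{0\}$; the companion implication is symmetric, using $C^{\circ\circ}=C$ and $K^{**}=K$ for closed convex cones. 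For the first, I would pick $\bar x\in\relint C\cap\relint K$ and any $u\in C^\circ\cap K^*$; the orthogonality fact gives $\ip u{\bar x}=0$, so the linear functional $\ip u\cdot$ attains its maximum over the cone $C$ (namely $0$) at the relative interior point $\bar x$, hence vanishes on all of $C$, and symmetrically on all of $K$, whence a standard separation argument yields $u=0$.

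The routine parts — the translation dictionary, the feasibility check, and the invocation of Proposition~\ref{prop.DR.facts} — present no difficulty. The one place that needs care is the very last step of the ``in particular'' argument: passing from ``$\ip u\cdot$ vanishes on $C$ and on $K$'' to ``$u=0$'' uses that $C$ and $K$ jointly span $\R^n$, which is automatic under the interior form of the Slater condition but in general requires first restricting attention to $\mathrm{span}(C)+\mathrm{span}(K)$ and checking that the Douglas--Rachford dynamics stay in that subspace. I would handle this either by working with the interior hypothesis or via that reduction; everything else is bookkeeping.
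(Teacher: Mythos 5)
Your derivation of the identity $\barW=(C\cap K)+(C^\circ\cap K^*)$ is exactly the paper's: specialize Proposition~\ref{prop.DR.facts} to $f=\delta_C$, $g=\delta_K$, note that both the primal and the dual feasibility problems attain the common optimal value $0$ (since $0\in C\cap K$ and $0\in C^\circ\cap K^*$), and read off $\barW=\barX+\barU$ from~\eqref{eq.fixed.all}. That part is correct and needs no further comment.

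For the two ``in particular'' claims, the paper disposes of them in one line by asserting the equivalence $\relint(C)\cap\relint(K)\ne\emptyset\Leftrightarrow C^\circ\cap K^*=\{0\}$ as a consequence of the Proper Separation Theorem, whereas you unwind the argument and, in doing so, put your finger on a genuine subtlety: the step from ``$\ip{u}{\cdot}$ vanishes on $C$ and on $K$'' to ``$u=0$'' needs $\mathrm{span}(C)+\mathrm{span}(K)=\R^n$. You are right to worry. The Proper Separation Theorem cleanly gives only the direction $C^\circ\cap K^*=\{0\}\Rightarrow\relint(C)\cap\relint(K)\ne\emptyset$; the converse direction, which is the one actually needed here, fails in general. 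Indeed, take $C=K=L$ a proper linear subspace: then $\relint(C)\cap\relint(K)=L\ne\emptyset$, yet $C^\circ\cap K^*=L^\perp\ne\{0\}$, and correspondingly $F=\mathrm{id}$ so that $\barW=L+L^\perp=\R^n\ne C\cap K$. So the defect you found is not merely a gap in your argument but in the statement (and the paper's proof) itself in this degenerate situation; always $C^\circ\cap K^*\supseteq(\mathrm{span}(C)+\mathrm{span}(K))^\perp$, so $\barW$ strictly contains $C\cap K$ whenever the two cones do not jointly span $\R^n$. One caution about your proposed repair: restricting the Douglas--Rachford dynamics to $\mathrm{span}(C)+\mathrm{span}(K)$ may salvage statements about the iterates, but it cannot rescue the set identity $\barW=C\cap K$ as written; what is really needed is the additional hypothesis that $C$ and $K$ span $\R^n$ (automatic under the full-interior Slater condition, as you note), under which your relative-interior maximization argument does close the proof. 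Everything else in your write-up is fine.
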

\begin{proof}
Identity~\eqref{eq.fixed.set} readily follows from~\eqref{eq.fixed.all} because $C\cap K = \argmin_x \{\delta_C(x)+\delta_K(x)\}$ and $C^{\circ}\cap K^*=\argmin_u \{\delta_{C^\circ}(u)+\delta_{K^*}(u)\}= \argmax_u \{-\delta_{C}^*(u)-\delta^*_{K}(-u)\}$. The second statement follows from~\eqref{eq.fixed.set} and the fact that $\relint(C)\cap \relint{(K)} \ne \emptyset \Leftrightarrow C^\circ\cap K^* = \{0\}$, the latter in turn being a consequence of the Proper Separation Theorem~\citep[][Thm. 11.3]{rockafellar1970convex}.
\end{proof}

We next show that the error bound~\eqref{eq.error.bound.F} holds for the mapping  $F$ in~\eqref{eq.F.gral} provided that $\max\{\S(K|C),\S( K^* | C^\circ)\} < \infty$. Proposition~\ref{prop.rel.Hoffman.set}  implies that the latter condition holds when $C$ and $K$ are polyhedral, $\relint(C) \cap \relint(K) \ne \emptyset$, or $\relint(C^\circ) \cap \relint(K^*) \ne \emptyset$.

\begin{theorem}\label{prop.error.gral}
Let $C,K \subseteq \R^n$ be closed convex cones.
If $\max\{\S(K|C),\S(K^* | C^\circ)\} < \infty$  then
the mapping $F$ in~\eqref{eq.F.gral} satisfies the error bound condition~\eqref{eq.error.bound.F} on $S:=\R^n$ with $H = \S(K|C) + \S(K^* | C^\circ)$.
\end{theorem}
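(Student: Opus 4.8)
The plan is to unwind the definition of $F$ in~\eqref{eq.F.gral} so that $\|(I-F)(w)\|$ is expressed directly in terms of the distances from certain points to the cones $C$, $K$, $C^\circ$, and $K^*$, and then invoke the two subtransversality bounds to control $\dist(w,\barW)$. Fix $w\in\R^n$ and write $x:=\Pi_C(w)=w^C$, so that $R_C(w)=x-w^{C^\circ}=2x-w$. Then $(I-F)(w)=\tfrac12\bigl(w-R_K(R_C(w))\bigr)=R_C(w)-\Pi_K(R_C(w))$ after using $R_K(z)=2\Pi_K(z)-z$. So, setting $z:=R_C(w)=2x-w$ and $y:=\Pi_K(z)$, we have $(I-F)(w)=z-y$ and hence $\|(I-F)(w)\|=\dist(z,K)$. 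By the primal--dual symmetry (Proposition~\ref{prop.DR.symm}), applying the same manipulation with the roles of the cones swapped to $(C^\circ,K^*)$ gives the same operator $I-F$; concretely, writing $u:=\Pi_{C^\circ}(w)=w^{C^\circ}$ and $z':=2u-w=w^C-w^{C^\circ}-2w^{C^\circ}$... more directly, since $z=w^C-w^{C^\circ}$ and $-z=w^{C^\circ}-w^C = \Pi_{C^\circ}(-z)-\Pi_{C^*}... $ one checks $R_{C^\circ}(w) = -z$, and then $(I-F)(w)$ also equals $-z-\Pi_{K^*}(-z)$ up to sign, so $\|(I-F)(w)\|=\dist(-z,K^*)=\dist(z,-K^*)=\dist(z,K^\circ)$ as well. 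Thus $\|(I-F)(w)\|$ simultaneously equals $\dist(z,K)$ and $\dist(z,K^\circ)$ for the single point $z=R_C(w)$.

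Next I would locate a fixed point near $w$. Since $z=w^C-w^{C^\circ}$ with $w^C\in C$ and $w^C\perp w^{C^\circ}$... the key observation is that $w^C = \Pi_C(z)$ (reflecting back), so $z\in C$ "partially": more precisely $\Pi_C(z)=w^C$ and $\dist(z,C)=\|w^{C^\circ}\|$; but what we actually need is that $x=\Pi_C(w)=\Pi_C(z)$ lies in $C$, and we want to push it into $C\cap K$. Using $\S(K|C)<\infty$ applied to the point $x=\Pi_C(z)\in C$: we get $\dist(x,C\cap K)\le \S(K|C)\,\dist(x,K)$. The point is to bound $\dist(x,K)$ by $\|(I-F)(w)\|=\dist(z,K)$, which follows because $x=\Pi_C(z)$ and $\Pi_C$ is nonexpansive together with $\dist(x,K)\le\|x-\Pi_K(z)\|\le\|x-z\|+\|z-\Pi_K(z)\|$... this needs a little care since $\|x-z\|=\|w^{C^\circ}\|$ need not be small. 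The cleaner route: the projection $\Pi_K(z)=y$ satisfies $\dist(z,K)=\|z-y\|$, and one shows $\dist(x,C\cap K)$ is controlled by $\|z-y\|$ by noting $\Pi_{C\cap K}$ relationships. Symmetrically, $u=\Pi_{C^\circ}(w)=\Pi_{C^\circ}(-z)\in C^\circ$ and $\S(K^*|C^\circ)<\infty$ gives $\dist(u,C^\circ\cap K^*)\le \S(K^*|C^\circ)\,\dist(u,K^*)\le \S(K^*|C^\circ)\,\|(I-F)(w)\|$.

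Finally, since $w=w^C+w^{C^\circ}=x+u$ and $\barW=(C\cap K)+(C^\circ\cap K^*)$ by Proposition~\ref{prop.fixed.gral}, picking $\bar x\in C\cap K$ achieving $\dist(x,C\cap K)$ and $\bar u\in C^\circ\cap K^*$ achieving $\dist(u,C^\circ\cap K^*)$, the point $\bar w:=\bar x+\bar u\in\barW$ satisfies
\[
\dist(w,\barW)\le\|w-\bar w\|\le\|x-\bar x\|+\|u-\bar u\|=\dist(x,C\cap K)+\dist(u,C^\circ\cap K^*)\le\bigl(\S(K|C)+\S(K^*|C^\circ)\bigr)\|(I-F)(w)\|,
\]
which is exactly~\eqref{eq.error.bound.F} with $H=\S(K|C)+\S(K^*|C^\circ)$ on $S=\R^n$. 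The main obstacle I anticipate is the middle step: carefully showing that $\dist(x,K)$ and $\dist(u,K^*)$ are each bounded by the single quantity $\|(I-F)(w)\|=\dist(z,K)=\dist(z,K^\circ)$. This requires exploiting the orthogonal decomposition $z=w^C-w^{C^\circ}$ and the fact that reflection in a cone is an isometry, so that distances from the "reflected" point $z$ to $K$ and $K^\circ$ coincide with the relevant distances from $x$ and $u$; the nonexpansiveness of projections and the Moreau-type identity~\eqref{eq:conedecomp} should close this gap, but the bookkeeping with the four cones is where the argument must be done precisely.
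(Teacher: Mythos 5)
Your overall architecture is exactly the paper's: decompose $w=w^C+w^{C^\circ}$, bound $\dist(w,\barW)\le\dist(w^C,C\cap K)+\dist(w^{C^\circ},C^\circ\cap K^*)$ using $\barW=(C\cap K)+(C^\circ\cap K^*)$, apply the two subtransversality constants, and then dominate the resulting residuals by $\|(I-F)(w)\|$. The final chain of inequalities is correct. However, the middle step --- the one you yourself flag as the main obstacle --- contains a genuine error, and the identities you propose there are false.

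Specifically, with $z:=R_C(w)=w^C-w^{C^\circ}$ you claim $(I-F)(w)=z-\Pi_K(z)$ and hence $\|(I-F)(w)\|=\dist(z,K)=\dist(z,K^\circ)$. The correct computation is
\[
(I-F)(w)=\tfrac12\bigl(w-R_K(z)\bigr)=\tfrac12(w+z)-\Pi_K(z)=w^C-\Pi_K(z),
\]
since $w+z=2w^C$; your expression $z-\Pi_K(z)$ would require $w=z$, i.e.\ $w^{C^\circ}=0$. A one-dimensional counterexample: $C=K=\R_+$, $w=-1$ gives $z=1$, $\dist(z,K)=0$, $\dist(z,K^\circ)=1$, yet $(I-F)(w)=-1$; so $\|(I-F)(w)\|$ equals neither quantity and the two claimed values do not even agree with each other. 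What \emph{is} true --- and is all you need --- is the inequality
\[
\|(I-F)(w)\|=\|w^C-\Pi_K(z)\|\ \ge\ \dist(w^C,K),
\]
because $\Pi_K(z)\in K$, and symmetrically, using the Moreau decomposition $z=\Pi_K(z)+\Pi_{K^\circ}(z)$ to rewrite $(I-F)(w)=w^{C^\circ}-\Pi_{K^*}(-z)$, the bound $\|(I-F)(w)\|\ge\dist(w^{C^\circ},K^*)$. These two lower bounds are precisely what the paper establishes (its display $w-F(w)=w^C-(w^C-w^{C^\circ})^K=w^{C^\circ}-(w^{C^\circ}-w^C)^{K^*}$), and once you replace your false equalities with them, your argument closes and coincides with the paper's proof.
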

\begin{proof}
Suppose $w\in \R^n$.  Then
\begin{align*}
\dist(w,\barW) &= \dist(w^C+w^{C^\circ},(C \cap K) + (C^\circ\cap K^*))\\
&\le \dist(w^C,C \cap K) + \dist(w^{C^\circ},C^\circ\cap K^*) \\
& \le \S(K | C)\|w^C - (w^C)^K\| + \S( K^* | C^\circ)\|w^{C^\circ} - (w^{C^\circ})^{K^*}\|\\
&= \S(K | C)\cdot\|(w^C)^{K^\circ}\| + \S( K^* | C^\circ)\cdot\|(w^{C^\circ})^{-K}\|  & \text{(using~\eqref{eq:conedecomp})}\\
&\le (\S(K|C) +\S(K^* | C^\circ)) \cdot \max\{\|(w^C)^{K^\circ}\|,\|(w^{C^\circ})^{-K}\|\}\\
&= H\cdot\max\{\|(w^C)^{K^\circ}\|,\|(w^{C^\circ})^{-K}\|\}.
\end{align*}
To finish, it is enough to show that for all $w \in \R^n$
\begin{equation}\label{eq.to.finish}
\|w-F(w)\| \ge  \max\{\|(w^C)^{K^\circ}\|,\|(w^{C^\circ})^{-K}\|\}.
\end{equation}
To that end, use the identities (recall~\eqref{eq:conedecomp}) $w = w^C + w^{C^\circ}$ and
$(w^C-w^{C^\circ})^K + (w^C-w^{C^\circ})^{K^\circ} = w^C-w^{C^\circ}$
to write $w-F(w)$ in the following two different ways
\[
w-F(w) = w^C - (w^C-w^{C^\circ})^K = w^{C^\circ} - (w^{C^\circ}-w^C)^{K^*}.
\]
The  construction of the projection mappings implies that
\[
\|w-F(w)\| = \|w^C - (w^C-w^{C^\circ})^K\| \ge \|w^C - (w^C)^K\| = \|(w^C)^{K^\circ}\|
\]
and
\[
\|w-F(w)\| = \|w^{C^\circ} - (w^{C^\circ}-w^C)^{K^*}\| \ge \|w^{C^\circ}-(w^{C^\circ})^{K^*}\| = \|(w^{C^\circ})^{-K}\|.
\]
Thus~\eqref{eq.to.finish} follows.
\end{proof}

\begin{proposition}\label{prop.nonzero.gral}
Let $C,K \subseteq \R^n$ be closed convex cones. Suppose the Douglas-Rachford algorithm is applied to the pair of feasibility problems~\eqref{eq.feas.primal.gral} and~\eqref{eq.feas.dual.gral}. Then

\begin{enumerate}[label = (\roman*)]
\item If $K$ is pointed and the algorithm starts from some $w_0 \in \relint(K) \cap \relint(K^*)$
then $w_k \rightarrow \bar w$ for some $\bar w \in \barW$ such that $\bar w^C \in C\cap K \setminus\{0\}$ if $C\cap K\ne\{0\}$ and $\bar w^{C^\circ} \in C^\circ\cap K^* \setminus\{0\}$ if $C^\circ\cap K^*\ne\{0\}.$ \label{prop.nzgral.one}
\item If $C$ is pointed and the algorithm starts from some $w_0 \in \relint(C) \cap \relint(C^*)$
then $w_k \rightarrow \bar w$ for some $\bar w \in \barW$ such that $\bar w^C \in C\cap K \setminus\{0\}$ if $C\cap K\ne\{0\}$ and $\bar w^{C^\circ} \in C^\circ\cap K^* \setminus\{0\}$ if $C^\circ\cap K^*\ne\{0\}.$ \label{prop.nzgral.two}
\end{enumerate}
\end{proposition}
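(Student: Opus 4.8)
The plan is to get the convergence for free and then deduce non-vanishing of $\bar w^C$ and $\bar w^{C^\circ}$ from a monotonicity property of well-chosen scalar sequences. Since $\barW=(C\cap K)+(C^\circ\cap K^*)\ne\emptyset$ by Proposition~\ref{prop.fixed.gral}, Theorem~\ref{thm.DR} already yields $w_k\to\bar w$ for some $\bar w\in\barW$, and Proposition~\ref{prop.DR.facts} (with $f=\delta_C$, so $\prox_f=\Pi_C$ and $\prox_{f^*}=\Pi_{C^\circ}$) identifies $\bar w^C:=\Pi_C(\bar w)\in\barX=C\cap K$ and $\bar w^{C^\circ}:=\Pi_{C^\circ}(\bar w)\in\barU=C^\circ\cap K^*$. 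Hence the entire content is to show $\bar w^C\ne 0$ when $C\cap K\ne\{0\}$ and $\bar w^{C^\circ}\ne 0$ when $C^\circ\cap K^*\ne\{0\}$.

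The engine is the elementary observation that for a closed convex cone $D$ and any $z\in D$, the reflection satisfies $\ip{R_D(v)}{z}\ge\ip{v}{z}$ for all $v\in\R^n$: since $v-\Pi_D(v)=\Pi_{D^\circ}(v)\in D^\circ$ by~\eqref{eq:conedecomp}, we have $\ip{v-\Pi_D(v)}{z}\le 0$, whence $\ip{R_D(v)}{z}=2\ip{\Pi_D(v)}{z}-\ip{v}{z}\ge\ip{v}{z}$. Applying this twice to $F(w)=\tfrac12\bigl(w+R_K(R_C(w))\bigr)$ from~\eqref{eq.F.gral} gives $\ip{F(w)}{z}\ge\ip{w}{z}$ for every $z\in C\cap K$, so $k\mapsto\ip{w_k}{z}$ is nondecreasing; applying it to the alternative representation $F(w)=\tfrac12\bigl(w+R_{K^*}(R_{C^\circ}(w))\bigr)$ (valid by the Moreau identity, cf. the proof of Proposition~\ref{prop.DR.symm}) gives that $k\mapsto\ip{w_k}{z'}$ is nondecreasing for every $z'\in C^\circ\cap K^*$. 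Now if $C\cap K\ne\{0\}$, fix $0\ne z\in C\cap K$; since $\bar w^{C^\circ}\in C^\circ$ and $z\in C$ we get $\ip{\bar w^{C^\circ}}{z}\le 0$, so $\ip{\bar w^C}{z}=\ip{\bar w}{z}-\ip{\bar w^{C^\circ}}{z}\ge\ip{\bar w}{z}=\lim_k\ip{w_k}{z}\ge\ip{w_0}{z}$, and analogously $\ip{\bar w^{C^\circ}}{z'}\ge\ip{w_0}{z'}$ for a chosen $0\ne z'\in C^\circ\cap K^*$ (using $\ip{\bar w^C}{z'}\le 0$). Thus it suffices to exhibit a generator $z$ of $C\cap K$ (resp. $z'$ of $C^\circ\cap K^*$) with $\ip{w_0}{z}>0$ (resp. $\ip{w_0}{z'}>0$).

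This last step is where the pointedness and relative-interior hypotheses enter, and it is the step I expect to require the most care. For~\ref{prop.nzgral.one}, $K$ is pointed, so its lineality space is $\{0\}$, and therefore every $u\in\relint(K^*)$ pairs strictly positively with every nonzero $z\in K$; since $w_0\in\relint(K^*)$ and $\{0\}\ne C\cap K\subseteq K$ this gives $\ip{w_0}{z}>0$ for a suitable $z$, and the companion bound $\ip{w_0}{z'}>0$ for a suitable $0\ne z'\in C^\circ\cap K^*\subseteq K^*$ follows dually from $w_0\in\relint(K)$. Case~\ref{prop.nzgral.two} is treated identically with $C$ playing the role of $K$, using $w_0\in\relint(C)\cap\relint(C^*)$ and the pointedness of $C$. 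The main obstacle is precisely this bookkeeping: making the relative-interior membership of $w_0$ deliver a \emph{strict} pairing against the right generator of each nontrivial cone intersection; once that is in place, the reflection-monotonicity of $\ip{w_k}{\cdot}$ together with the already-established convergence and the fixed-point identities finish the argument.
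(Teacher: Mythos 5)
Your primal half is correct and is essentially the paper's own argument in different packaging: the paper proves $\ip{\hat x}{w_{k+1}-w_k}\ge 0$ for $\hat x\in C\cap K$ by expanding $F$ through the Moreau decompositions, which is exactly your reflection-monotonicity observation applied twice to $F=\tfrac12(I+R_KR_C)$; the chain $\ip{\bar w^C}{z}\ge\ip{\bar w}{z}\ge\ip{w_0}{z}>0$ and the use of pointedness of $K$ together with $w_0\in\relint(K^*)=\inter(K^*)$ to get strict positivity against any nonzero $z\in C\cap K\subseteq K$ are also the paper's. So the conclusion $\bar w^C\in C\cap K\setminus\{0\}$ when $C\cap K\ne\{0\}$ is fully established by your argument.

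The gap is in the dual half, at exactly the step you flagged as delicate. The fact ``$K$ pointed and $u\in\relint(K^*)$ imply $\ip{u}{z}>0$ for all $z\in K\setminus\{0\}$'' does \emph{not} dualize to a statement about pairing $w_0\in\relint(K)$ against $K^*\setminus\{0\}$: the correct dual would require $K^*$ to be pointed, i.e.\ $K$ to be full-dimensional, which is not assumed. Concretely, if $\operatorname{span}(K)\ne\R^n$ then $K^\perp\setminus\{0\}\subseteq K^*\setminus\{0\}$ and every $w_0\in K$ pairs to zero with such vectors; if $C^\circ\cap K^*$ happens to be nontrivial but contained in $K^\perp$, no generator $z'$ with $\ip{w_0}{z'}>0$ exists and your argument cannot force $\bar w^{C^\circ}\ne 0$. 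This is not just a missing lemma: take $n=2$, $K=\R_+\times\{0\}$, $C=\{x:x_2\le 0\}$, $w_0=(1,0)\in\relint(K)\cap\relint(K^*)$. Then $F(w)=(\max(w_1,0),\max(w_2,0))$, so $w_0$ is already a fixed point, $C^\circ\cap K^*=\{0\}\times\R_+\ne\{0\}$, yet $\bar w^{C^\circ}=\Pi_{C^\circ}(1,0)=0$. You are in good company: the paper writes out only the primal half and, for the dual half, invokes ``a symmetric argument'' under the \emph{different} hypothesis $\relint(C^\circ)\cap\relint(K^*)\ne\emptyset$ rather than $C^\circ\cap K^*\ne\{0\}$ --- precisely the hedge that your ``follows dually'' elides. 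The same issue (with an additional sign problem, since $z'\in C^\circ$ pairs nonpositively with $w_0\in C$) afflicts the dual conclusion of item~(ii). In short: the $\bar w^C$ conclusions are proved; the $\bar w^{C^\circ}$ conclusions need either a strengthened hypothesis or a genuinely different argument.
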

\begin{proof}
We prove statement~\ref{prop.nzgral.one} only,  as statement~\ref{prop.nzgral.two} follows via a similar argument.  First, Theorem~\ref{thm.DR} implies that $w_k\rightarrow \barw \in \barW.$
Second, for any $\hat x \in C\cap K$ we have
\begin{align*}
\ip{\hat x}{w_{k+1}-w_k} &= \frac{1}{2}\ip{\hat x}{(w_k^C-w_k^{C^\circ})^K - (w_k^C-w_k^{C^\circ})^{K^\circ} - (w_k^C+w_k^{C^\circ})} \\[1ex]
&\ge \frac{1}{2}\ip{\hat x}{(w_k^C-w_k^{C^\circ})^K - (w_k^C-w_k^{C^\circ})^{K^\circ} - (w_k^C-w_k^{C^\circ})} \\
&=-\ip{\hat x}{(w_k^C-w_k^{C^\circ})^{K^\circ}} \\
& \ge 0.
\end{align*}
The second step above follows from $\hat x \in C$ and $w_k^{C^\circ}\in C^\circ$.  The third step follows from the identity $w^C_k -w_k^{C^\circ} =
(w^C_k -w_k^{C^\circ})^K + (w^C_k -w_k^{C^\circ})^{K^\circ}$. The fourth step follows from $\hat x \in K$ and $(w^C_k -w_k^{C^\circ})^{K^\circ} \in K^\circ$.
Thus $\ip{\hat x}{w_k}\ge \ip{\hat x}{w_0}$ for all $k=0,1,\dots$.  In particular, if $\hat x \in C\cap K\setminus\{0\}$ then $\ip{\hat x}{w_k^C} \ge \ip{\hat x}{w_k}\ge \ip{\hat x}{w_0} > 0$ where the last inequality holds because $\hat x \in K \setminus \{0\}, w_0 \in \relint(K^*),$ and $K$ is pointed, and it follows that $\ip{\hat x}{w_k^C}$, $k = 1, \dots$, is bounded away from zero.  Therefore  $C\cap K\ne \{0\}$ and $w_0 \in \relint(K) \cap \relint(K^*)$ imply that
 $w_k^C \rightarrow \bar w^C\in C\cap K \setminus\{0\}$.  A symmetric argument shows that if $\relint(C^\circ) \cap \relint(K^*) \ne \emptyset$ then $w_k^{C^\circ} \rightarrow \bar w^{C^\circ} \in C^\circ\cap K^* \setminus\{0\}$.
\end{proof}

\subsection{Intersection of a linear subspace and $\R^n_+$.}
\label{sec.linear.orthant}
Suppose $L \subseteq \R^n$ is a linear subspace and consider the feasibility problem
\begin{equation}\label{eq.feas.primal}
\text{ find } x \in L\cap \R^n_+.
\end{equation}
This problem can be written in form~\eqref{eq.primal} as
\[
\min_x\; \delta_L(x) + \delta_{\R^n_+}(x).
\]
Its Fenchel dual is equivalent to the following alternative feasibility problem
\begin{equation}\label{eq.feas.dual}
\text{ find } u \in L^\perp\cap \R^n_+.
\end{equation}
The pair of problems~\eqref{eq.feas.primal} and~\eqref{eq.feas.dual} are special cases of~\eqref{eq.feas.primal.gral} and~\eqref{eq.feas.dual.gral} when $C=L,\; K=\R^n_+$.  Thus Proposition~\ref{prop.fixed.gral}, Theorem~\ref{prop.error.gral}, and Proposition~\ref{prop.nonzero.gral}\ref{prop.nzgral.one} hold.  Furthermore, Proposition~\ref{prop.rel.Hoffman.set}\ref{item:oneHoffman} implies that for this choice of $C$ and $K$ the condition $\max\{\S(\R^n_+|L),\S(\R^n_+|L^\perp)\} < \infty$ in Theorem~\ref{prop.error.gral} automatically holds for any linear subspace $L\subseteq\R^n_+$.

Theorem~\ref{thm.finite.feas} below shows that the Douglas-Rachford algorithm applied to~\eqref{eq.feas.primal} and~\eqref{eq.feas.dual} also satisfies an interesting finite termination property.   To formalize this statement, we rely on the canonical {\em maximum support sets} $\supp(L)$ and
$\supp(L^\perp)$ defined next.

For $x\in\R^n_+$ let the support of $x$ be defined as  $$\supp(x):=\{i: x^{(i)} > 0\}.$$
Define the {\em maximum support} set of a linear subspace $L\subseteq\R^n$ as follows
\[
\supp(L):=\{i: x^{(i)} > 0 \text{ for some } x\in L\cap \R^n_+\}.
\]
It is easy to see that $\supp(L) = \supp(x)$ for any $x\in L\cap \R^n_+$ such that the set $\supp(x)$ is maximal.
It is also easy to see that $\supp(L)\cap\supp(L^\perp) = \emptyset$.  Furthermore, the classic Goldman-Tucker Theorem~\cite{goldman1956linear} implies that  $\supp(L) \cup  \supp(L^\perp)= \{1,\dots,n\}$.  In other words, the maximum support sets $\supp(L)$ and $\supp(L^\perp)$ partition the index set $\{1,\dots,n\}.$

To ease notation in Theorem~\ref{thm.finite.feas}, we adopt the following notational convention throughout the rest of this section.  For $w \in \R^n$, we let $x:=\Pi_L(w)$ and $u := \Pi_{L^\perp}(w)$ respectively denote the orthogonal projections of~$w$ onto $L$ and $L^\perp$.   Notice that the reflection of $w=x+u$ in~$L$ is $R_L(w) = x-u$. Also, given any vector $z \in \R^n$, the reflection of $z$ in $\R^n_+$ is  $R_{\R^n_+}(z) = |z|$.  Thus, the Douglas-Rachford algorithm applied to~\eqref{eq.feas.primal} and~\eqref{eq.feas.dual} can be written as
\[
w_{k+1} = F(w_k)
\]
where it follows from \eqref{eq:opersetint}, that the Douglas-Rachford operator $F:\R^n \rightarrow \R^n$ is
\begin{equation}\label{eq.F.special}
F(w) = \frac 12(x+u + |x-u|) = \max(x,u).
\end{equation}

\begin{theorem}\label{thm.finite.feas}
Let $L \subseteq \R^n$ be a linear subspace.
Suppose
that the Douglas-Rachford algorithm is applied to the pair of feasibility problems~\eqref{eq.feas.primal} and~\eqref{eq.feas.dual} starting from some $w_0 \in \R^n_{++}$. Then

\begin{enumerate}[label = (\roman*)]
\item The algorithm identifies the maximum support index sets $\supp(L)$ and $\supp(L^\perp)$ after finitely many steps.
That is, there exists $k_0$ such that  both $\supp(L) = \{i: x_k^{(i)} > u_k^{(i)}\}$ and
$\supp(L^\perp) = \{i: x_k^{(i)} < u_k^{(i)}\}$ for $k> k_0$. \label{thm.finite.one}
\item
If $L\cap \R^n_{++} \ne \emptyset$ or $L^\perp\cap\R^n_{++}\ne\emptyset$ then the algorithm terminates after finitely many steps.  That is, there exists $k_0$ such that  either $w_k = \bar w \in L\cap \R^n_{++}$ or $w_k = \bar w \in L^\perp\cap \R^n_{++}$ for $k> k_0$. \label{thm.finite.two}
\end{enumerate}

\end{theorem}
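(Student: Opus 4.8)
The plan is to reduce both statements to the single assertion that the limit point is strictly positive, $\bar w\in\R^n_{++}$, and then to establish that assertion. First I would record the consequences of Theorem~\ref{thm.DR}: since $\barW=(L\cap\R^n_+)+(L^\perp\cap\R^n_+)$ is nonempty (Proposition~\ref{prop.fixed.gral}), the iterates converge, $w_k\to\bar w\in\barW$, and, setting $\bar x:=\Pi_L(\bar w)\in L\cap\R^n_+$ and $\bar u:=\Pi_{L^\perp}(\bar w)\in L^\perp\cap\R^n_+$, continuity of the projections gives $x_k\to\bar x$ and $u_k\to\bar u$. From $\bar x\in L$ and $\bar u\in L^\perp$ we get $\ip{\bar x}{\bar u}=0$, so $\bar x$ and $\bar u$ have disjoint supports, with $\supp(\bar x)\subseteq\supp(L)$ and $\supp(\bar u)\subseteq\supp(L^\perp)$; and $\supp(L)$, $\supp(L^\perp)$ are disjoint and, by the Goldman--Tucker theorem, partition $\{1,\dots,n\}$.

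Two elementary facts underlie the argument. First, $w_k\in\R^n_{++}$ for every $k$: arguing by induction, $w_{k+1}^{(i)}=\max\{(\Pi_L w_k)^{(i)},(\Pi_{L^\perp}w_k)^{(i)}\}$ is the larger of two numbers summing to $w_k^{(i)}>0$, hence positive. Second, for every $\hat x\in L\cap\R^n_+$ the sequence $k\mapsto\ip{\hat x}{w_k}$ is nondecreasing, and likewise $k\mapsto\ip{\hat u}{w_k}$ for every $\hat u\in L^\perp\cap\R^n_+$; this is precisely the computation in the proof of Proposition~\ref{prop.nonzero.gral} (applied with $C=L$, $K=\R^n_+$, and, through the primal--dual symmetry of Proposition~\ref{prop.DR.symm}, with $C=L^\perp$, $K=\R^n_+$, which yields the same operator $F$), and it gives $\ip{\hat x}{w_k}\ge\ip{\hat x}{w_0}>0$ whenever $\hat x\in(L\cap\R^n_+)\setminus\{0\}$, and symmetrically. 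In particular $\bar x\neq 0$ when $L\cap\R^n_+\neq\{0\}$ and $\bar u\neq 0$ when $L^\perp\cap\R^n_+\neq\{0\}$.

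Assuming $\bar w\in\R^n_{++}$, the theorem follows quickly. If $i\in\supp(L)$ then $i\notin\supp(L^\perp)\supseteq\supp(\bar u)$, so $\bar u^{(i)}=0$ and hence $\bar x^{(i)}=\bar w^{(i)}>0$; since $x_k^{(i)}\to\bar x^{(i)}>0$ and $u_k^{(i)}\to 0$, we get $x_k^{(i)}>u_k^{(i)}$ for all large $k$, and symmetrically $x_k^{(i)}<u_k^{(i)}$ for all large $k$ when $i\in\supp(L^\perp)$; since these sets partition $\{1,\dots,n\}$, statement~\ref{thm.finite.one} follows. For statement~\ref{thm.finite.two}, suppose $L\cap\R^n_{++}\neq\emptyset$ (the case $L^\perp\cap\R^n_{++}\neq\emptyset$ is symmetric via Proposition~\ref{prop.DR.symm}); pairing a point of $L\cap\R^n_{++}$ with any element of $L^\perp\cap\R^n_+$ forces the latter to vanish, so $L^\perp\cap\R^n_+=\{0\}$, $\supp(L)=\{1,\dots,n\}$ and $\barW=L\cap\R^n_+$. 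By statement~\ref{thm.finite.one}, for $k>k_0$ we have $x_k>u_k$ coordinatewise, so $w_{k+1}=F(w_k)=\max(x_k,u_k)=x_k=\Pi_L(w_k)\in L$; together with the strict positivity of the iterates this gives $w_{k+1}\in L\cap\R^n_{++}\subseteq\barW$, which is the fixed-point set of $F$, whence $w_k$ is eventually constant (equal to some $\bar w\in L\cap\R^n_{++}$).

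It remains to prove $\bar w\in\R^n_{++}$, and this is where I expect the real work to be. By the disjoint-support decomposition of $\bar w$, the claim is equivalent to $\supp(\bar x)=\supp(L)$ and $\supp(\bar u)=\supp(L^\perp)$, i.e.\ to upgrading ``$\bar x,\bar u\neq 0$'' to full support; the monotone inner products alone do not force this, so the iteration dynamics must be exploited. I would argue by contradiction: if $Z:=\{i:\bar w^{(i)}=0\}\neq\emptyset$, then for all large $k$ each coordinate $i\notin Z$ is resolved in the maximum defining $F(w_k)$ by the (now fixed) sign of $\bar x^{(i)}-\bar u^{(i)}\neq 0$, so $F$ acts affinely on the coordinates outside $Z$ and by $\max$ on those in $Z$; combining this with the strict positivity of the $Z$-block of $w_k$ and with the conservation law, one shows — either by induction on $n$, treating the $Z$-coordinates as carrying a lower-dimensional Douglas--Rachford-type recursion started from a strictly positive point, or by a direct analysis of $F$ near the fixed point $\bar w$ — that $w_k^{(i)}$ cannot converge to $0$ for $i\in Z$, a contradiction. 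Making this ``support identification on the balanced coordinates $Z$'' rigorous is the crux of the proof.
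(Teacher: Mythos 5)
Your preliminary reductions are sound and mostly coincide with steps the paper also takes: the strict positivity $w_k\in\R^n_{++}$, the monotonicity of $k\mapsto\ip{\hat x}{w_k}$ for $\hat x\in L\cap\R^n_+$ (this is exactly the computation in Proposition~\ref{prop.nonzero.gral}), the deduction of statement~\ref{thm.finite.two} from statement~\ref{thm.finite.one}, and the deduction of statement~\ref{thm.finite.one} from the claim $\bar w\in\R^n_{++}$ are all correct. But the entire content of the theorem now rests on the claim $\bar w\in\R^n_{++}$, and that claim is not proved --- you flag it yourself as ``the crux.'' Two issues with leaving it there. First, the sketch offered is not an argument: the projections $\Pi_L$ and $\Pi_{L^\perp}$ couple the coordinates in $Z=\{i:\bar w^{(i)}=0\}$ with those outside $Z$, so the $Z$-block of the iteration is not a self-contained lower-dimensional Douglas--Rachford recursion on which one could induct; and ``$w_k^{(i)}>0$ for all $k$'' is perfectly compatible with $w_k^{(i)}\to0$, so no mechanism is actually identified that rules out $Z\neq\emptyset$. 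Second, $\bar w\in\R^n_{++}$ is equivalent to asserting that the limits $\bar x$ and $\bar u$ are themselves maximum-support points of $L\cap\R^n_+$ and $L^\perp\cap\R^n_+$. That is at least as strong as statement~\ref{thm.finite.one}, which only asserts that the sign pattern of $x_k-u_k$ eventually matches the partition $\supp(L)\cup\supp(L^\perp)=\{1,\dots,n\}$ (the two coincide only if one already knows $x_k^{(i)}-u_k^{(i)}$ stays bounded away from the wrong sign), and the paper pointedly does not claim it. You may therefore have reduced the theorem to a strictly harder statement.

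For comparison, the paper proves the sign identification directly by a global argument along the whole trajectory. Setting $y_k:=w_k-w_{k+1}$ and $z_k:=w_{k+1}-\bar w$, it establishes: (1) $\ip{z_k-z_\ell}{y_k-y_\ell}\ge0$ for all $k,\ell$, from firm nonexpansiveness of $F$ (Lemma~\ref{lem:basic}); (2) $\ip{z_k}{y_k}=0$ for all large $k$, using only $\bar w\ne0$ and the fact that the maximum in $F(w_k)=\max(x_k,u_k)$ is eventually resolved correctly on the coordinates of $\supp(\bar w)$; (3) hence $\ip{\sum_k\lambda_kz_k}{\sum_k\lambda_ky_k}\le0$ for convex combinations; and (4) by proper separation applied to the convex cone generated by the increments $y_k$, together with the test vector $-(\hat x+\hat u)$ where $\hat x,\hat u$ have maximum supports, a failure of the sign identification at some late iteration forces $\ip{z_{\ell_0}}{\sum_k\lambda_ky_k}>0$, contradicting (3). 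Some argument of this global, separation-based kind appears to be necessary; supplying it is precisely what is missing from your proposal.
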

\begin{proof}
First, observe that statement~\ref{thm.finite.two} is a consequence of statement~\ref{thm.finite.one}.  Indeed,  $L\cap \R^n_{++} \ne \emptyset$ precisely when $\supp(L) = \{1,\dots,n\}$.  Hence, if
$L\cap \R^n_{++} \ne \emptyset$,
for $k > k_0$ statement~\ref{thm.finite.one} implies that $x_k > u_k$ and thus $w_{k+1} = \max(x_k,u_k) = x_{k}$, which implies $x_{k+1} = x_k, v_{k+1} = 0$. We have then $w_k = x_k = u_{k_0}$ for all $k > k_0$. A similar argument holds when $L^\perp\cap \R^n_{++} \ne \emptyset$. Therefore it suffices to prove  statement~\ref{thm.finite.one}.

Proposition~\ref{prop.nonzero.gral}\ref{prop.nzgral.one}  implies that $w_k \rightarrow \bar w = \bar x + \bar u \in (L\cap \R^n_+) + (L^\perp\cap\R^n_+)\setminus\{0\}$ with $\supp(\bar x) \subseteq \supp(L)$ and $\supp(\bar u) \subseteq \supp(L^\perp)$. Let $y_k:=w_{k} - w_{k+1}$ and $z_k := w_{k+1}-\bar w$ for $k=0,1,\dots$.  The proof is based on the four claims below.  Statement~\ref{thm.finite.one} follows by combining Claim~3 and Claim~4.  Claim~3 in turn follows from Claim~1 and Claim~2.

\begin{enumerate}[label = {\em Claim~\arabic*:}, wide, align=left, leftmargin=0pt, labelindent=0pt]
\item $\ip{z_k-z_\ell}{y_k-y_\ell} \ge 0$ for all $k,\ell\ge 0$. \label{cl1}
\item There exists $k_0$ such that $\ip{z_k}{y_k} = 0$ for $k\ge k_0$.  \label{cl2}
\item Suppose $n_0 \ge k_0$ and $\lambda_k \ge 0$ for $k=k_0,\dots,n_0$  are such that $\sum_{k=k_0}^{n_0}\lambda_k=1$. Then
$$\ip{\sum_{k=k_0}^{n_0}\lambda_k z_k}{\sum_{k=k_0}^{n_0}\lambda_k y_k}\le 0.$$  \label{cl3}

\item
There exist $n_0 \ge k_0$ and
$\lambda_k > 0$ for $k=k_0,\dots,n_0$  such that for all $\ell \ge k_0$ \[\ip{z_\ell}{\sum_{k=k_0}^{n_0} \lambda_k y_k}\ge 0.\]
Furthermore, if statement~\ref{thm.finite.one} does not hold, then the inequality is strict for some $\ell_0 \ge k_0$. \label{cl4}
\end{enumerate}

To complete the proof, we next prove each of the  four claims above.

\begin{enumerate}[label = {\em Claim~\arabic*:}, wide, align=left, leftmargin=0pt, labelindent=0pt]
\item Observe that
\begin{align*}
\ip{z_k-z_\ell}{y_k-y_\ell} &= \ip{w_{k+1}-w_{\ell+1}}{w_{k} - w_{k+1} - (w_{\ell}-w_{\ell+1})} \\
&= \ip{F(w_{k})-F(w_{\ell})}{w_{k} - F(w_{k}) - (w_{\ell}-F(w_{\ell})}\\
& = \ip{F(w_{k})-F(w_{\ell})}{w_{k} - w_\ell} - \|F(w_{k}) - F(w_{\ell})\|^2.
\end{align*}
Thus Claim 1 follows from Lemma~\ref{lem:basic}(i) applied to $w:=w_{k}$ and $\hat w:=w_{\ell}$.

\item Let $I:=\supp(\bar w) = \supp(\bar x) \cup \supp(\bar u)$.
Proposition~\ref{prop.nonzero.gral}\ref{prop.nzgral.one} implies that $\bar w\ne 0$ and thus $I \neq \emptyset$. Let
$\epsilon:=\min_{i\in I} \bar w^{(i)}>0$, and $k_0$ be such that $\|w_k - \bar w\| < \epsilon/3$ for all $k\ge k_0$.  Suppose $k \ge k_0$. Since orthogonal projections are non-expansive, it follows that
$\|x_k - \bar x\|< \epsilon/3$ and $\|u_k - \bar u\|<\epsilon/3$.  For $i\in \supp(\bar x)\subseteq \supp(\bar w)$ we have $\bar w^{(i)} \ge \epsilon$ and $\bar u^{(i)} =0$. Thus, $w^{(i)}_k > 2\epsilon/3$ and $|u^{(i)}_k| < \epsilon/3$.
Hence for $i\in \supp(\bar x)$ we have
$x^{(i)}_k = w^{(i)}_k - u^{(i)}_k > 2\epsilon/3 - \epsilon/3 = \epsilon/3 > \|u_k - \bar u\|\ge |u_k^{(i)} - \bar u^{(i)}| = |u_k^{(i)}|$ and thus $\max(x_k^{(i)},u_k^{(i)}) = x_k^{(i)}$.  Likewise, for  $i\in \supp(\bar u)$ we have  $\max(x_k^{(i)},u_k^{(i)}) = u_k^{(i)}$.  Since $\bar w = \bar x + \bar u$ with $\bar x\in L$ and $\bar u \in L^\perp$, it follows that
\begin{align*}
\ip{\bar w}{w_{k+1} - w_{k}} &= \ip{\bar x + \bar u}{\max(x_k,u_k) - x_{k}-u_k} \\
&= \ip{\bar x}{\max(x_k,u_k) - x_{k}} + \ip{\bar u}{\max(x_k,u_k) - u_{k}}\\&=
\sum_{i\in \supp(\bar x)} \bar x^{(i)}(\max(x_k^{(i)},u_k^{(i)}) - x_k^{(i)}) +
\sum_{i\in \supp(\bar u)} \bar u^{(i)}(\max(x_k^{(i)},u_k^{(i)}) - u_k^{(i)}) \\
&=
0.
\end{align*}
Claim 2 then follows by observing that $\ip{w_{k+1}}{ w_k- w_{k+1}} = \ip{\max(x_k,u_k)}{\min(x_k,u_k)} = \ip{x_k}{u_k}= 0$ and thus $\ip{z_k}{y_k} = \ip{w_{k+1} - \bar w}{w_{k} -w_{k+1}} = 0$.

\item  Let $k_0$ be as in Claim 2.  Claim 1, Claim 2, and some straightforward algebraic manipulations show that if $n_0 \ge k_0$ and $\lambda_k \ge 0$ for $k=k_0,\dots,n_0$  are such that $\sum_{k=k_0}^{n_0}\lambda_k=1$ then
\begin{align*}
0 &= \sum_{k=k_0}^{n_0} \lambda_k \ip{z_k}{y_k} \\
&= \ip{\sum_{k=k_0}^{n_0} \lambda_k z_k}{ \sum_{k=k_0}^{n_0} \lambda_k y_k} + \frac 12 \sum_{k=k_0}^{n_0} \sum_{\ell=k_0}^{n_0} \lambda_k \lambda_\ell \ip{z_k - z_\ell}{y_k-y_\ell} \\
&\ge  \ip{\sum_{k=k_0}^{n_0} \lambda_k z_k}{ \sum_{k=k_0}^{n_0} \lambda_k y_k}.\notag
\end{align*}

\item Suppose $k_0$ is as in Claim 2 and
let $$K:=\left\{\sum_{k=k_0}^{n_0} \lambda_ky_k: n_0\ge k_0 \text{ and } \lambda_k > 0 \text{ for } k=k_0,\dots,n_0\right\}.$$
Since $K$ is a convex cone, by the Proper Separation Theorem~\citep[][Thm. 11.3]{rockafellar1970convex}, we have $\relint(K)\cap \relint(K^*) \ne \emptyset$.  Therefore there exist $n_0 \ge k_0$ and $\lambda_k > 0$ for $k=k_0,\dots,n_0$ such that $\sum_{k=k_0}^{n_0} \lambda_ky_k \in \relint(K^*) \subseteq K^*$.  For $\ell \ge k_0$ we have $z_\ell = w_{\ell+1} - \bar w = \sum_{k=\ell+1}^\infty (w_k - w_{k+1}) = \sum_{k=\ell}^\infty y_k \in \cl(K)$ and thus
\[
\ip{z_\ell} {\sum_{k=k_0}^{n_0} \lambda_ky_k}\ge 0.
\]

Suppose statement (i) does not hold.  Then there exists $\ell_0\ge k_0$
 such that either  $x^{(i)}_{\ell_0+1} < u^{(i)}_{\ell_0+1}$ for some
$i\in \supp(L)$ or $x^{(i)}_{\ell_0+1} > u^{(i)}_{\ell_0+1}$ for some $i\in \supp(L^\perp)$.
Let $\hat x \in L \cap \R^n_{+}$ and $\hat u \in L^\perp \cap \R^n_{+}$ be such that $\supp(\hat x) = \supp(L)$ and $\supp(\hat u) = \supp(L^\perp)$.  Then for
 $\hat w:=-(\hat x + \hat u)$ and $k\ge k_0$ we get
\begin{align}\label{eq.part.b}
\ip{\hat w}{y_k} &=
\frac{1}{2}\ip{\hat x + \hat u} {|x_k-u_k|-(x_k+u_k)} \notag
\\[1ex]
&= \frac{1}{2}\ip{\hat x}{|x_k-u_k|-(x_{k}-u_k)} +
\frac{1}{2}(\ip{\hat u}{|u_k-x_k|-(u_k-x_k)} \notag\\
& = \sum_{i\in \supp(L)} \hat x^{(i)} \max\{u^{(i)}_k - x^{(i)}_k,0\} +
\sum_{i\in \supp(L^\perp)} \hat u^{(i)} \max\{x^{(i)}_k - u^{(i)}_k,0\} \notag\\
&\ge 0,
\end{align}
and the inequality is strict for $k=\ell_0+1$. Therefore,
$$
\ip{\hat w}{z_{\ell_0}} = \sum_{k=\ell_0+1}^\infty\ip{\hat w}{y_k} \ge \ip{\hat w}{y_{\ell_0+1}}> 0.$$
Inequality~\eqref{eq.part.b} also implies that $\hat w \in K^*$.
Hence, since $\sum_{k=k_0}^{n_0} \lambda_ky_k \in \relint(K^*)$, there exists $\epsilon > 0$ such that $\sum_{k=k_0}^{n_0} \lambda_ky_k  - \epsilon \hat w \in K^*$.  Therefore,
\[
\ip{z_{\ell_0}} {\sum_{k=k_0}^{n_0} \lambda_ky_k - \epsilon \hat w}
\ge 0 \Rightarrow \ip{z_{\ell_0}} {\sum_{k=k_0}^{n_0} \lambda_ky_k}
\ge \epsilon \ip{\hat w}{z_{\ell_0}} >0.
\]
\end{enumerate}
\end{proof}

Theorem~\ref{thm.finite.feas}\ref{thm.finite.one} shows that the Douglas-Rachford algorithm
provides a partial answer to the open question posed by Dadush, V\'egh, and Zambelli~\citep[][Sec. 5]{dadush2020rescaling} concerning an algorithm that simultaneously finds the maximum support sets for the feasibility problems $L\cap \R^n_+$ and $L^\perp\cap \R^n_+$.

Further, Theorem~\ref{thm.finite.feas}\ref{thm.finite.two} is inspired by and related to the pioneering work of Spingarn~\cite{spingarn1985primal} and the more recent work of Bauschke et al.~\cite{bauschke2016slater}.  Indeed,~\citep[][Thm. 3.7]{bauschke2016slater} establishes the finite convergence of the  Douglas-Rachford algorithm in the case when the function $f$ is the indicator function of an affine subspace $A$, the function $g$ is the indicator function of a closed convex set $B$ that is polyhedral at every point in $A \cap \bdry{B}$, and the {\em Slater} condition $A \cap \inter{B} \neq \emptyset$ is satisfied.  The latter work in turn can be seen as a refinement of the earlier result in~\citep[][Thm. 2]{spingarn1985primal} that shows the finite convergence of the method of partial inverses for solving a system of linear inequalities provided the system is strictly feasible.  In contrast to~\cite{bauschke2016slater,spingarn1985primal}, Theorem~\ref{thm.finite.feas}\ref{thm.finite.two} is specialized to the problems $L\cap \R^n_+$ and $L^\perp\cap \R^n_+$.  This in turn yields a finite termination result in full generality that does not require a Slater condition.  Furthermore, although similar in spirit, our proof of finite termination is substantially more succinct than the proofs of finite termination in~\cite{bauschke2016slater,spingarn1985primal}. We also extend these results to the more general case in which $L$ is an affine subspace in the next section (Proposition~\ref{prop.fixed.affine}).

\subsection{Intersection of an affine subspace and $\R^n_+$.}
\label{sec:affine}
The previous approach can be extended to the following more general feasibility problem.
Suppose $\tilde L \subseteq \R^{n-1}$ is an affine subspace and consider the feasibility problem
\[
\text{ find } \tilde x \in \tilde L\cap \R^{n-1}_+.
\]
Via homogenization, this problem can be recast as
\begin{equation}\label{eq.affine}
\text{ find } x \in  L\cap \R^{n}_+, \; x^{(1)} \ge 1 \Leftrightarrow \text{ find } x \in  L\cap P
\end{equation}
where
$
P:=[1,\infty)\times \R^{n-1}_+
$ and $L:= \left\{t\cdot\matr{1\\ \tilde x}: \tilde x\in L, t\in \R\right\}.$

Problem~\eqref{eq.affine} can be written in form~\eqref{eq.primal} as
\[
\min_x \; \delta_L(x) + \delta_{P}(x)
\]
Each iteration of the Douglas-Rachford algorithm applied to the problem~\eqref{eq.affine} performs a reflection in $L$ and a reflection in $P$.  For $z  \in \R^n$ the reflection of $z$ in $P$ is
$
|z-e^1| + e^1,
$ where $e^1\in \R^n$ is the vector with first component equal to one and all others equal to zero.
Thus the Douglas-Rachford algorithm applied to~\eqref{eq.affine} can be written as
\[
w_{k+1} = F(w_k)
\]
where it follows from~\eqref{eq:opersetint}, that
the Douglas-Rachford operator $F:\R^n \rightarrow \R^n$ is given by
\begin{equation}\label{eq.F.affine}
F(w) = \frac 12(x+u+ |x-u-e^1| + e^1) = \max(x,u+e^1).
\end{equation}

Straightforward modifications of the proofs of Proposition~\ref{prop.fixed.gral}, Theorem~\ref{prop.error.gral}, and Theorem~\ref{thm.finite.feas} yield the following properties of the Douglas-Rachford algorithm when applied to~\eqref{eq.affine}.

\begin{proposition}\label{prop.fixed.affine} Suppose $L\subseteq\R^n$ is a linear subspace and $P=[1,\infty)\times\R^{n-1}_+$ are such that $L\cap P \ne \emptyset$.
\begin{enumerate}[label = (\roman*)]
\item The set of fixed points of the mapping $F$ in~\eqref{eq.F.affine} is
\begin{equation}\label{eq.fixed.affine}\barW = (L \cap P) + (L^\perp \cap \R^n_+).\end{equation}
In particular, $\barW = L \cap P$ if $L \cap \R^n_{++} \ne \emptyset$.
\item The mapping $F$ in~\eqref{eq.F.affine} satisfies the error bound condition~\eqref{eq.error.bound.F} on $S:= \R^n$ for $H = \S(P|L)+\S(\R^n_+|L^\perp).$
\item Suppose that  the Douglas-Rachford algorithm is applied to~\eqref{eq.affine}  starting from some $w_0 \in P$. If $L\cap \R^n_{++} \ne \emptyset$ then the algorithm terminates after finitely many steps.  That is, there exists $k_0$ such that  $w_k = \bar w \in L\cap P$ for $k\ge k_0$.
\end{enumerate}
\end{proposition}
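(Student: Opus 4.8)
The plan is to treat the three parts in turn, in each case rerunning the corresponding argument from Sections~\ref{sec:justcones}--\ref{sec.linear.orthant} with the operator $F(w)=\max(x,u+e^1)$ of~\eqref{eq.F.affine} in place of $F(w)=\max(x,u)$, where throughout $x:=\Pi_L(w)$, $u:=\Pi_{L^\perp}(w)$ are the orthogonal components of $w$ along $L\oplus L^\perp$ and $e^1$ is the first standard basis vector.

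\emph{Part (i).} I would characterize $\barW$ directly. Writing $w=x+u$, the fixed-point equation $w=F(w)$ reads, coordinatewise, $x^{(1)}+u^{(1)}=\max(x^{(1)},u^{(1)}+1)$ and $x^{(i)}+u^{(i)}=\max(x^{(i)},u^{(i)})$ for $i\ge 2$. The first identity is equivalent to $x^{(1)}\ge 1$, $u^{(1)}\ge 0$, $(x^{(1)}-1)u^{(1)}=0$, and each of the others to $x^{(i)}\ge 0$, $u^{(i)}\ge 0$, $x^{(i)}u^{(i)}=0$. Hence $w=F(w)$ iff $x\in P$ and $u\in\R^n_+$, the complementarity relations being automatic once $x,u\in\R^n_+$ with $x\perp u$; since $w=x+u$ is the orthogonal decomposition, this is exactly~\eqref{eq.fixed.affine}. (Alternatively, invoke Proposition~\ref{prop.DR.facts} together with~\eqref{eq.fixed.all}: $\argmin\{\delta_L+\delta_P\}=L\cap P$, while $\delta_P^*(u)=u^{(1)}$ for $u\le 0$ componentwise and $=+\infty$ otherwise, so the dual objective $u\mapsto-\delta_P^*(-u)$ equals $u^{(1)}$ on the dual feasible set $L^\perp\cap\R^n_+$; as $L\cap P\ne\emptyset$, any $\bar x\in L\cap P$ and any $u\in L^\perp\cap\R^n_+$ satisfy $\ip{\bar x}{u}=0$ with $\bar x\in\R^n_+$, $u\in\R^n_+$, forcing $u^{(1)}=0$, so the dual argmax is all of $L^\perp\cap\R^n_+$.) The ``in particular'' statement is then immediate: if $\hat x\in L\cap\R^n_{++}$ then every $u\in L^\perp\cap\R^n_+$ has $\ip{\hat x}{u}=0$ with $\hat x>0$, $u\ge 0$, hence $u=0$; thus $L^\perp\cap\R^n_+=\{0\}$ and $\barW=L\cap P$.

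\emph{Part (ii).} I would copy the estimate from the proof of Theorem~\ref{prop.error.gral}. Using $x\in L$, $u\in L^\perp$, $\dist(a+b,A+B)\le\dist(a,A)+\dist(b,B)$, and the definition~\eqref{eq.Hoffman.set} of the subtransversality moduli,
\[
\dist(w,\barW)\le\dist(x,L\cap P)+\dist(u,L^\perp\cap\R^n_+)\le\S(P|L)\,\dist(x,P)+\S(\R^n_+|L^\perp)\,\dist(u,\R^n_+),
\]
both moduli being finite by Proposition~\ref{prop.rel.Hoffman.set}\ref{item:oneHoffman}. Bounding the right-hand side by $\bigl(\S(P|L)+\S(\R^n_+|L^\perp)\bigr)\max\{\dist(x,P),\dist(u,\R^n_+)\}$, it remains to check $\|w-F(w)\|\ge\dist(x,P)$ and $\|w-F(w)\|\ge\dist(u,\R^n_+)$. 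A direct computation gives
\[
w-F(w)=\bigl(\min(x^{(1)}-1,u^{(1)}),\min(x^{(2)},u^{(2)}),\dots,\min(x^{(n)},u^{(n)})\bigr),
\]
while the coordinate residuals of $\dist(x,P)$ are $|\min(x^{(1)}-1,0)|,|\min(x^{(2)},0)|,\dots$ and those of $\dist(u,\R^n_+)$ are $|\min(u^{(i)},0)|$. Comparing coordinate by coordinate settles both inequalities, since whenever the relevant residual is nonzero the corresponding $\min(a,b)$ lies weakly below it in sign and magnitude.

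\emph{Part (iii).} By part (i), $L\cap\R^n_{++}\ne\emptyset$ gives $\barW=L\cap P$, and Theorem~\ref{thm.DR} gives $w_k\to\bar w=\bar x\in L\cap P$. I would then mimic the four-claim argument in the proof of Theorem~\ref{thm.finite.feas}, with $y_k:=w_k-w_{k+1}$, $z_k:=w_{k+1}-\bar w$. Claim~1, $\ip{z_k-z_\ell}{y_k-y_\ell}\ge 0$, is verbatim Lemma~\ref{lem:basic}\ref{it:bas1.3} for the $F$ of~\eqref{eq.F.affine}. The monotonicity ingredient is also available: any $\hat x\in L\cap P$ has $\hat x\ge 0$, $\hat x\perp u_k$, and $F(w_k)-x_k=(u_k+e^1-x_k)^+\ge 0$, so $\ip{\hat x}{w_{k+1}-w_k}\ge 0$, whence $\ip{\hat x}{w_k}\ge\ip{\hat x}{w_0}\ge 1>0$ once $w_0\in P$; this plays the role Proposition~\ref{prop.nonzero.gral} played in the subspace case. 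The adapted Claims~2--4 then show the algorithm eventually identifies the coordinates on which $x_k$ wins the maximum in~\eqref{eq.F.affine}; since $L\cap\R^n_{++}\ne\emptyset$ is equivalent to $L\cap\inter(P)\ne\emptyset$ (scale a strictly positive vector of $L$ so that its first coordinate exceeds $1$), this identified set is all of $\{1,\dots,n\}$, so $F(w_k)=x_k$ for $k$ large; then $x_{k+1}=\Pi_L(x_k)=x_k$, $u_{k+1}=0$, and $w_k=\bar w\in L\cap P$ for all large $k$.

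\emph{The main obstacle.} Parts (i) and (ii) are routine; the real work is part (iii), where the inhomogeneity $e^1$ breaks the clean homogeneous bookkeeping of Theorem~\ref{thm.finite.feas}, the difficulty being concentrated in the first coordinate. Specifically, in the analogue of Claim~2 the product $\ip{w_{k+1}}{w_k-w_{k+1}}$ is no longer simply $\ip{x_k}{u_k}=0$; it vanishes only once the first coordinate is $x$-active (i.e. $x_k^{(1)}\ge u_k^{(1)}+1$), and so the degenerate case $\bar w^{(1)}=1$ — the limit lying on the face $\{w^{(1)}=1\}$ of $P$ — must be handled by the cone-separation argument of the analogues of Claims~3--4 rather than by a limiting argument, exactly as that argument handles the degeneracy $\bar w^{(i)}=0$ in Theorem~\ref{thm.finite.feas}. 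Carefully adapting Claims~2 and~4 to this split first coordinate, and confirming that $L\cap\inter(P)\ne\emptyset$ forces the identified active set to be the full index set, is where care is needed; one should also track exactly where $w_0\in P$ enters (I expect only through the monotonicity bound above).
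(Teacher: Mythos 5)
Your parts (i) and (ii) are correct and complete: the coordinatewise characterization of the fixed points of~\eqref{eq.F.affine} (via $\min(x^{(1)}-1,u^{(1)})=0$ and $\min(x^{(i)},u^{(i)})=0$ for $i\ge 2$) and the verification that $\|w-F(w)\|\ge\max\{\dist(x,P),\dist(u,\R^n_+)\}$ supply exactly the details that the paper leaves to the reader when it says these follow by ``straightforward modifications'' of Proposition~\ref{prop.fixed.gral} and Theorem~\ref{prop.error.gral}.

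The gap is in part (iii), and it sits exactly where you placed it, but your proposed repair is misdirected. The analogue of Claim~2, namely $\ip{z_k}{y_k}=0$ for all large $k$, is an \emph{input} to Claims~3 and~4 (Claim~3 is literally Claim~1 plus Claim~2, and Claim~4 is only useful when played against Claim~3), so you cannot defer the degenerate case $\bar w^{(1)}=1$ to ``the cone-separation argument of the analogues of Claims~3--4'' --- that would be circular. Nor is the situation ``exactly as'' the degeneracy $\bar w^{(i)}=0$ in Theorem~\ref{thm.finite.feas}: there the undetermined coordinates contribute nothing to $\ip{\bar w}{w_{k+1}-w_k}$ precisely because $\bar w^{(i)}=0$, whereas here $\bar w^{(1)}=\bar x^{(1)}=1\ne 0$, so the first coordinate contributes to both inner products and one must check that the contributions cancel. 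They do: writing $a^+:=\max(a,0)$ and using $\ip{x_k}{u_k}=0$, $\bar u=0$, and $w_{k+1}-w_k=(u_k+e^1-x_k)^+-u_k$, one gets for $k$ large enough that $(u_k^{(i)}-x_k^{(i)})^+=0$ whenever $i\ge2$ and $\bar x^{(i)}>0$,
\[
\ip{z_k}{y_k}\;=\;-\bigl(u_k^{(1)}+1-x_k^{(1)}\bigr)^{+}+\ip{\bar x}{(u_k+e^1-x_k)^{+}}\;=\;\bigl(\bar x^{(1)}-1\bigr)\bigl(u_k^{(1)}+1-x_k^{(1)}\bigr)^{+},
\]
which vanishes identically when $\bar x^{(1)}=1$ and vanishes eventually when $\bar x^{(1)}>1$ (since then $x_k^{(1)}\to\bar x^{(1)}>1$ and $u_k^{(1)}\to 0$). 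So the gap closes with a short computation rather than a new idea; with Claim~2 thus secured, the rest of your outline (Claim~4 with $\hat w=-\hat x$ for $\hat x\in L\cap\R^n_{++}$ scaled into $\inter(P)$, strictness of $\ip{\hat x}{(u_k+e^1-x_k)^{+}}$ whenever $F(w_k)\ne x_k$, and the final bootstrap $w_{k+1}=x_k$, $u_{k+1}=0$, forcing $x_k\in P$) goes through.
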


\section{Linearly Constrained Case.}
\label{sec:genDR}

Consider the more general, linearly constrained problem
\begin{equation}\label{eq.primal.gral}
\begin{array}{rl}
\dmin_{x,y}  & f(x) + g(y) \\
& Ax + By = b.
\end{array}
\end{equation}
and its Fenchel dual
\begin{equation}\label{eq.dual.gral}
\dmax_u \;  - f^*(A^*u) - g^*(B^*u) + \ip{b}{u}
\end{equation}
where $f \in \Gamma_0(\R^n), g\in \Gamma_0(\R^m), b\in \R^k,$ and $A:\R^n\rightarrow \R^k,\,B:\R^m\rightarrow \R^k$ are linear mappings.

We can rewrite these two problems in the unconstrained primal-dual format of problems~\eqref{eq.primal} and~\eqref{eq.dual} as
\begin{equation}\label{eq.primal.gral.again}
\dmin_{z} \tilde f(z) + \tilde g(z)
\end{equation}
and
\begin{equation}\label{eq.dual.gral.again}
\dmax_u \; -\tilde f^*(u) - \tilde g^*(-u)
\end{equation}
where
\begin{equation}\label{eq.tilde.funcs}
\tilde f(z) : = \min_x\{f(x): Ax = z\}
\; \text{ and }
\;
\tilde g(z) : = \min_y\{g(y): b- By = z\}.
\end{equation}
Observe that
\[
z = \prox_{\tilde f}(w) \Leftrightarrow z = Ax_+ \text{ for some } x_+ \in \argmin_x\left\{f(x) + \frac{1}{2}\|Ax-w\|^2\right\},
\]
and similarly,
\[
z = \prox_{\tilde g}(w) \Leftrightarrow z = b - By_+ \text{ for some } y_+ \in
\argmin_y\left\{g(y) + \frac{1}{2}\|b- By-w\|^2\right\}.
\]
Since $f \in \Gamma_0(\R^n)$ and $g\in \Gamma_0(\R^m)$ the above minimizers exist for all $w\in\R^k$.  In particular, the mappings $\prox_{\tilde f}$ and $\prox_{\tilde g}$ are defined everywhere.

Algorithm~\ref{algo.DR} applied to $\min_z \{\tilde f(z) + \tilde g(z)\}$ can be rewritten as Algorithm~\ref{algo.DR.gral} for the problems~\eqref{eq.primal.gral} and~\eqref{eq.dual.gral}.  In this case the Douglas-Rachford operator $F$ can be written as
\begin{equation}\label{eq.DR.gral}
F(w) = w - Ax_+-By_++b
\end{equation}
where
\[
x_+ \in \argmin_x\left\{f(x) + \frac{1}{2}\|Ax-w\|^2\right\}, \; y_+
\in \argmin_y\left\{g(y) + \frac{1}{2}\|b- By-2Ax_++w\|^2\right\}.
\]
\begin{algorithm}[H]
  \caption{Douglas-Rachford general case} \label{algo.DR.gral}
  \begin{algorithmic}[1]
    \State Pick $w_0\in \R^q$
\For{$k=0,1,2,\dots$}
	\Statex 	\quad $x_{k+1}:=\argmin\left\{f(x) + \frac{1}{2}\|Ax - w_k\|^2 \right\}$
	\Statex \quad $y_{k+1}:=\argmin\left\{g(y) + \frac{1}{2}\|b - By- 2Ax_{k+1} + w_k\|^2 \right\}$
	\Statex 	\quad $w_{k+1}:=w_k-Ax_{k+1}-By_{k+1}+b$
\EndFor
	\end{algorithmic}
\end{algorithm}

Proposition~\ref{prop.DR.facts} and Theorem~\ref{thm.DR} extend in a straightforward fashion to this more general context as follows.

\begin{proposition}\label{prop.DR.facts.gral}
Let $f \in \Gamma_0(\R^n), g\in \Gamma_0(\R^m), b\in \R^k,$ and $A:\R^n\rightarrow \R^k, B:\R^m\rightarrow \R^k$ be linear mappings. Consider the problems~\eqref{eq.primal.gral},~\eqref{eq.dual.gral} and
the Douglas-Rachford operator $F$ defined via~\eqref{eq.DR.gral}. The optimal values of~\eqref{eq.primal.gral} and~\eqref{eq.dual.gral} are the same and they are both attained if and only if the set of fixed points $\barW = \{w \in \R^n: w = F(w)\}$ of $F$  is nonempty. When this is the case, the following correspondences between the optimal sets $\barXY := \argmin_{x,y}\{f(x)+g(y):Ax+By=b\}$, $\barU:=\argmax_u\{-f^*(A^*u)-g^*(B^*u)+\ip{b}{u}\},$ and the fixed point set
$\barW$ of $F$ hold. On the one hand,
\begin{align}\label{eq.opt.projections.gral}
\barXY = \{(\bar x,\bar y):\; &\bar x\in \argmin\left\{f(x)+\tfrac{1}{2}\|Ax-\bar w\|^2\right\} \text{ and } \\ \notag
&\bar y\in \argmin\left\{g(y)+\tfrac{1}{2}\|b-By-2A\bar x+\bar w\|^2\right\} \; \text{ for some } \bar w\in \barW\}.
\end{align}
On the other hand,
\begin{equation}\label{eq.fixed.all.gral}
\barW = \barZ + \barU
\end{equation}
for $\barZ = \{A\bar x:(\bar x,\bar y) \in \barXY\} =  \{b - B\bar y:(\bar x,\bar y) \in \barXY\}$.
\end{proposition}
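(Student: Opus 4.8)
The plan is to reduce everything to Proposition~\ref{prop.DR.facts} applied to the pair $\tilde f,\tilde g$ defined in~\eqref{eq.tilde.funcs}; once this reduction is in place, the statement becomes a matter of unwinding the definitions. First I would record the two facts that legitimize the reduction: (a) $\tilde f,\tilde g\in\Gamma_0(\R^k)$, and (b) the Douglas-Rachford operator~\eqref{eq:DRoperator} built from $(\tilde f,\tilde g)$ coincides with the operator $F$ of~\eqref{eq.DR.gral}. Part (b) is immediate from the characterizations of $\prox_{\tilde f}$ and $\prox_{\tilde g}$ recorded just before Algorithm~\ref{algo.DR.gral}: substituting $\prox_{\tilde f}(w)=Ax_+$ and $\prox_{\tilde g}(2\prox_{\tilde f}(w)-w)=b-By_+$ into $F(w)=w+\prox_{\tilde g}(2\prox_{\tilde f}(w)-w)-\prox_{\tilde f}(w)$ reproduces~\eqref{eq.DR.gral}.

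Next I would identify the conjugates. A direct computation gives $\tilde f^*(u)=f^*(A^*u)$ and $\tilde g^*(-u)=g^*(B^*u)-\ip{b}{u}$, so the Fenchel dual $\max_u\{-\tilde f^*(u)-\tilde g^*(-u)\}$ of $\min_z\{\tilde f(z)+\tilde g(z)\}$ is exactly problem~\eqref{eq.dual.gral} and $\barU=\argmax_u\{-\tilde f^*(u)-\tilde g^*(-u)\}$. Similarly, unwinding the infimal-projection definitions, $\min_z\{\tilde f(z)+\tilde g(z)\}=\min_{x,y}\{f(x)+g(y):Ax+By=b\}$, and the minimizer set $\barZ:=\argmin_z\{\tilde f(z)+\tilde g(z)\}$ equals $\{A\bar x:(\bar x,\bar y)\in\barXY\}$, which by feasibility coincides with $\{b-B\bar y:(\bar x,\bar y)\in\barXY\}$. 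With these identifications, Proposition~\ref{prop.DR.facts} applied to $(\tilde f,\tilde g)$ delivers at once the ``if and only if'' statement, the identity $\barW=\barZ+\barU$ (which is~\eqref{eq.fixed.all.gral}), and $\barZ=\{\prox_{\tilde f}(\bar w):\bar w\in\barW\}$.

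It then remains to prove~\eqref{eq.opt.projections.gral}, i.e.\ to lift the $z$-space description of optimizers back to $(x,y)$-space. Fix $\bar w\in\barW$ and set $\bar z:=\prox_{\tilde f}(\bar w)$; the fixed-point identity $\bar w=F(\bar w)$ forces $\prox_{\tilde g}(2\bar z-\bar w)=\bar z$ as well. Using strict convexity of $z\mapsto\tilde f(z)+\tfrac12\|z-\bar w\|^2$ together with the $\prox$-characterizations, every minimizer $\bar x$ of $f(x)+\tfrac12\|Ax-\bar w\|^2$ satisfies $A\bar x=\bar z$ and $f(\bar x)=\tilde f(\bar z)$, and conversely; hence $\argmin_x\{f(x)+\tfrac12\|Ax-\bar w\|^2\}=\argmin_x\{f(x):Ax=\bar z\}$, and likewise $\argmin_y\{g(y)+\tfrac12\|b-By-2A\bar x+\bar w\|^2\}=\argmin_y\{g(y):b-By=\bar z\}$. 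Any pair $(\bar x,\bar y)$ obtained this way is feasible with $f(\bar x)+g(\bar y)=\tilde f(\bar z)+\tilde g(\bar z)$, the common optimal value, so $(\bar x,\bar y)\in\barXY$; conversely, for $(\bar x,\bar y)\in\barXY$ one has $A\bar x\in\barZ$, $f(\bar x)=\tilde f(A\bar x)$, $g(\bar y)=\tilde g(A\bar x)$, so $(\bar x,\bar y)$ is recovered from any $\bar w\in\barW$ with $\prox_{\tilde f}(\bar w)=A\bar x$. Assembling these equalities yields~\eqref{eq.opt.projections.gral}.

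The conceptual content is small; the main obstacle is careful bookkeeping of minimizer sets. Two points need attention. First, confirming $\tilde f,\tilde g\in\Gamma_0(\R^k)$: an image function under a linear map is convex but need not be lower semicontinuous, so one must either invoke $\tilde f^*=f^*\circ A^*$ (always closed convex, and proper precisely when the two problems share a finite value) and pass through $\operatorname{cl}\tilde f=\tilde f^{**}$, or else phrase the whole argument directly in terms of the concrete minimizations appearing in~\eqref{eq.DR.gral}, where attainment is already guaranteed, so that closedness of $\tilde f$ is never used. Second, the two identifications of $\argmin$ sets in the lifting step must be proved as exact equalities rather than inclusions, since both~\eqref{eq.opt.projections.gral} and the description of $\barZ$ assert set equalities.
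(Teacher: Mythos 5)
Your proof is correct, but it follows a genuinely different route from the paper's. You reduce Proposition~\ref{prop.DR.facts.gral} to Proposition~\ref{prop.DR.facts} applied to the infimal projections $\tilde f,\tilde g$ of~\eqref{eq.tilde.funcs}, after computing $\tilde f^*=f^*\circ A^*$ and $\tilde g^*(-u)=g^*(B^*u)-\ip{b}{u}$ and then lifting the $z$-space minimizer sets back to $(x,y)$-space. The paper does not invoke Proposition~\ref{prop.DR.facts} at all: it redoes that argument from scratch in the original variables, writing the Fenchel--Young/optimality conditions $A\bar x+B\bar y=b$, $A^*\bar u\in\partial f(\bar x)$, $B^*\bar u\in\partial g(\bar y)$ and converting them directly into the fixed-point form~\eqref{eq.optcond.gral}. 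Your route makes the structural parallel with the unconstrained case explicit and yields~\eqref{eq.fixed.all.gral} immediately from~\eqref{eq.fixed.all}, but it must pay for the two technical points you rightly flag: (i) the image functions $\tilde f,\tilde g$ need not be closed, proper, or have attained infima without extra hypotheses, so invoking Proposition~\ref{prop.DR.facts} (stated for $\Gamma_0$) requires the workaround you sketch --- the paper sidesteps this by never passing through $\tilde f,\tilde g$ in the proof, although it too silently assumes attainment in the subproblems defining~\eqref{eq.DR.gral}; and (ii) the set equalities $\argmin_x\{f(x)+\tfrac12\|Ax-\bar w\|^2\}=\argmin_x\{f(x):Ax=\prox_{\tilde f}(\bar w)\}$ and its analogue for $g$, which your strict-convexity-in-$z$ argument handles correctly and which are exactly what turns the $z$-space description into~\eqref{eq.opt.projections.gral}. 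In short: the paper's direct KKT manipulation is shorter and keeps the hypotheses on $f,g$ only, while your reduction is more modular and explains why the general case is formally the unconstrained case in disguise, at the cost of having to certify $\tilde f,\tilde g$ and the lifting of argmin sets.
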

\begin{proof} This is a straightforward  modification of the proof of Proposition~\ref{prop.DR.facts}.
The Fenchel-Young inequality~\eqref{eq.fenchel-young} implies that the optimal values of~\eqref{eq.primal.gral} and~\eqref{eq.dual.gral} are the same and attained at  $(\bar x,\bar y)$ and $\bar u$ if and only if $(\bar x,\bar y,\bar u)$  solves the optimality conditions
\begin{equation}
\label{eq.optcond.initial.gral}
A\bar x+ B\bar y = b, \; A^*\bar u \in \partial f(\bar x), \;  B^*\bar  u \in \partial g(\bar  x).
\end{equation}
These optimality conditions can be equivalently stated as
\[
A\bar x+ B\bar y = b, \; A^*(\bar w - A\bar x) \in \partial f(\bar x), \; B^*(b - B\bar y - 2A\bar x + \bar w ) \in \partial g(\bar  y), \bar w = A\bar x + \bar u.
\]
The latter conditions in turn can be rewritten as follows
\begin{equation}
\label{eq.optcond.gral}
\begin{array}{rcl}
\bar x &\in& \argmin\left\{f(x)+\tfrac{1}{2}\|Ax-\bar w\|^2\right\}\\[1ex]
\bar y&\in& \argmin\left\{g(y)+\tfrac{1}{2}\|b-By-2A\bar x+\bar w\|^2\right\} \\
\bar w &=& F(\bar w)\\
\bar w&=&A\bar x + \bar u.
\end{array}
\end{equation}
The if and only if
statement in the proposition as well as the identities~\eqref{eq.opt.projections.gral} and~\eqref{eq.fixed.all.gral} readily follow from the equivalence between
\eqref{eq.optcond.initial.gral} and \eqref{eq.optcond.gral}.
\end{proof}
\bigskip
A straightforward modification of the proof of Theorem~\ref{thm.DR} yields the following result.

\begin{proposition}\label{prop.DR.gral}
Let $f \in \Gamma_0(\R^n), g\in \Gamma_0(\R^m), b\in \R^k,$ and $A:\R^n\rightarrow \R^k, B:\R^m\rightarrow \R^k$ be linear mappings. Consider the problems~\eqref{eq.primal.gral},~\eqref{eq.dual.gral} and the Douglas-Rachford operator $F$ defined via~\eqref{eq.DR.gral}.  If $\barW:=\{w\in\R^k: w = F(w)\}\ne \emptyset$ then there exists $\bar w \in \barW$, $\bar z \in \overline{Z} :=
\{A\bar x = b-B\bar y: (\bar x,\bar y) \in \argmin_{x,y}\{f(x)+g(y):Ax+By=b\}\}$ and $\bar u \in \barU :=
\argmax_u\{-f^*(A^*u)-g^*(B^*u)+\ip{b}{u}\}$ such that the
iterates $(x_k,y_k,w_k)$ generated by Algorithm~\ref{algo.DR.gral} satisfy
 $w_k\rightarrow \bar w$, $Ax_k \to \bar z$, $b-By_k \to \bar z$, and $u_{k} := w_{k-1}-Ax_k \rightarrow \bar u$. Furthermore,  if $F$ satisfies the error bound condition~\eqref{eq.error.bound.F} on $\{w\in\R^k: \dist(w,\barW)\le \dist(w_0,\barW)\}$, then
 $\dist(w_k,\barW) \rightarrow 0$ linearly, more precisely,
\begin{align}\label{eq.linear.conv.gral}
\dist(w_k,\barW)^2 &\le \left(1-\tfrac{1}{H^2}\right) \cdot \dist(w_{k-1},\barW)^2.
\end{align}
Additionally,  $w_k \rightarrow \bar w$,  $Ax_k \rightarrow \bar z$,  $b-By_k \to \bar z$, and $u_k\rightarrow \bar u$  R-linearly. If $\barW$ is a singleton, then $w_k\rightarrow \bar w$ linearly.
\end{proposition}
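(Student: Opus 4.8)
The plan is to deduce the proposition from Theorem~\ref{thm.DR} by recognizing Algorithm~\ref{algo.DR.gral} as Algorithm~\ref{algo.DR} applied to the unconstrained reformulation $\min_z\{\tilde f(z)+\tilde g(z)\}$, with $\tilde f,\tilde g$ as in~\eqref{eq.tilde.funcs}. First I would record, using the characterizations of $\prox_{\tilde f}$ and $\prox_{\tilde g}$ stated just before Algorithm~\ref{algo.DR.gral}, that the iterates of Algorithm~\ref{algo.DR.gral} satisfy $Ax_{k+1}=\prox_{\tilde f}(w_k)$, $b-By_{k+1}=\prox_{\tilde g}(2Ax_{k+1}-w_k)$, and $w_{k+1}=w_k+(b-By_{k+1})-Ax_{k+1}=F(w_k)$, where the $F$ of~\eqref{eq.DR.gral} is exactly the one-step Douglas--Rachford operator~\eqref{eq:DRoperator} built from $\tilde f$ and $\tilde g$. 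Thus $Ax_k$ plays the role of the primal iterate $x_k$ of Algorithm~\ref{algo.DR}, and $u_k=w_{k-1}-Ax_k=\prox_{\tilde f^*}(w_{k-1})$ by the Moreau decomposition~\eqref{eq.moreau}.

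I would then re-run the proof of Theorem~\ref{thm.DR} verbatim with this $F$. That proof uses only that $F$ is firmly non-expansive and continuous and that $\prox_{\tilde f}$ and $\prox_{\tilde f^*}$ are $1$-Lipschitz. These facts carry over: $\tilde f$ and $\tilde g$ are convex, their Moreau envelopes are finite-valued and differentiable, so $\prox_{\tilde f}$ and $\prox_{\tilde g}$ are everywhere single-valued and firmly non-expansive, and the chain of inner-product identities in the proof of Lemma~\ref{lem:basic} then shows $F$ is firmly non-expansive and $1$-Lipschitz. With this in place, the inequality~\eqref{eq.squares} gives that $\{w_k\}$ is bounded with $\|w_k-w_{k-1}\|\to 0$; any subsequential limit $\bar w$ is a fixed point of $F$ by continuity; and monotonicity of $\|w_k-\bar w\|$ upgrades this to $w_k\to\bar w\in\barW$.

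Next I would identify the remaining limits. Set $\bar z:=\prox_{\tilde f}(\bar w)$ and $\bar u:=\prox_{\tilde f^*}(\bar w)=\bar w-\bar z$. Since $\bar w$ is a fixed point, $\prox_{\tilde g}(2\bar z-\bar w)=\prox_{\tilde g}(2\prox_{\tilde f}(\bar w)-\bar w)=\prox_{\tilde f}(\bar w)=\bar z$, so by Proposition~\ref{prop.DR.facts.gral} there is $(\bar x,\bar y)\in\barXY$ with $A\bar x=b-B\bar y=\bar z$, i.e. $\bar z\in\barZ$, and $\bar u\in\barU$. Then $1$-Lipschitz continuity of the proximal maps gives $Ax_k=\prox_{\tilde f}(w_{k-1})\to\bar z$; $b-By_k=\prox_{\tilde g}(2Ax_k-w_{k-1})\to\prox_{\tilde g}(2\bar z-\bar w)=\bar z$; and $u_k=\prox_{\tilde f^*}(w_{k-1})\to\bar u$. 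Finally, under the error bound~\eqref{eq.error.bound.F} on $\{w:\dist(w,\barW)\le\dist(w_0,\barW)\}$, the monotonicity of $k\mapsto\dist(w_k,\barW)$ coming from~\eqref{eq.squares} shows the bound applies along the whole trajectory; combining it with~\eqref{eq.squares} yields~\eqref{eq.linear.conv.gral} and the R-linear rate for $w_k$ exactly as in the proof of Theorem~\ref{thm.DR}, the $1$-Lipschitz proximal maps transfer R-linearity to $Ax_k$, $b-By_k$, and $u_k$, and when $\barW$ is a singleton the rate for $w_k$ is honestly linear.

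The one point needing genuine care, rather than mechanical transcription, is that $\tilde f$ and $\tilde g$ need not be closed, so Lemma~\ref{lem:basic} cannot be cited as a black box for members of $\Gamma_0$; I would therefore establish firm non-expansiveness and continuity of $F$ directly, either via the Moreau-envelope observation above or by repeating the inner-product computation from the proof of Lemma~\ref{lem:basic}. The rest --- matching $Ax_k$ and $b-By_k$ to the two representations of $\barZ$ through the fixed-point relation, and propagating the rates through the $1$-Lipschitz maps --- is routine bookkeeping.
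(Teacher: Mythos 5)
Your proposal is correct and follows essentially the same route as the paper, which simply invokes ``a straightforward modification of the proof of Theorem~\ref{thm.DR}'' after identifying Algorithm~\ref{algo.DR.gral} with Algorithm~\ref{algo.DR} applied to $\tilde f$ and $\tilde g$; your identification of $Ax_k$ with $\prox_{\tilde f}(w_{k-1})$ and use of Proposition~\ref{prop.DR.facts.gral} to place the limits in $\barZ$ and $\barU$ is exactly the intended argument. Your added caution that $\tilde f$ and $\tilde g$ need not be closed, so firm non-expansiveness of $F$ should be verified directly rather than quoted from Lemma~\ref{lem:basic}, is a legitimate refinement of a point the paper glosses over, not a departure from its approach.
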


Notice that Proposition~\ref{prop.DR.gral} does not include a convergence statement for the primal sequence $(x_k,y_k)$.  This is due to the fact that for each $w_k \in \barW$ the set
$
\argmin\left\{f(x)+\tfrac{1}{2}\|Ax- w_k\|^2\right\}
$
is not necessarily a singleton and neither is the set
$
 \argmin\left\{g(y)+\tfrac{1}{2}\|b-By-2Ax_k+ w_k\|^2\right\}. 
$
It is easy to see that under some additional mild assumptions on $f,g$ and/or $A,B$ these two sets are singletons and thus $(x_k,y_k)\rightarrow (\bar x,\bar y) \in\argmin_{x,y}\{f(x)+g(y):Ax+By=b\}$ .

It is well known~\citep[see, e.g.,][]{boyd2011distributed} that the
alternating direction  method of multipliers (ADMM)
 applied to the unconstrained problem~\eqref{eq.primal} is equivalent to applying the Douglas-Rachford algorithm to its Fenchel dual problem~\eqref{eq.dual}. To conclude this section,
we show that the Douglas-Rachford algorithm is indeed {\em equivalent} to the ADMM in the more general linearly constrained setting. Namely, we next show that Algorithm~\ref{algo.DR.gral} is equivalent to the ADMM
 as described in Algorithm~\ref{algo.ADMM.gral}, when applied to the linearly constrained problem~\eqref{eq.primal.gral}.  For ease of notation, the description in  Algorithm~\ref{algo.ADMM.gral} includes a lag in the index of the sequence $\tilde y_k, k=0,1,\dots$.

\begin{algorithm}[H]
  \caption{ADMM} \label{algo.ADMM.gral}
  \begin{algorithmic}[1]
    \State Pick $\tilde x_0\in \dom(f), \tilde u_0\in\R^n$
\For{$k=0,1,2,\dots$}
	\Statex 	\quad $\tilde y_{k}:=\argmin\{g(y) - \ip{\tilde u_k}{A\tilde x_k+By-b} + \frac{1}{2}\|A\tilde x_k+By-b\|^2\}$ 
	\Statex 	\quad $\tilde x_{k+1}:=
	\argmin\{f(x) - \ip{\tilde u_k}{Ax+B\tilde y_{k}-b} + \frac{1}{2}\|Ax+B\tilde y_{k}-b\|^2\}$
	\Statex \quad $\tilde u_{k+1}:=\tilde u_{k}- A\tilde x_{k+1}-B\tilde y_{k}+b$
\EndFor
	\end{algorithmic}
\end{algorithm}

\begin{proposition}\label{prop.DR.ADMM.equiv} The sequence
$\{(\tilde x_{k+1},\tilde y_{k+1},\tilde u_{k+1}), \; k =0,1,\dots\}$ generated by Algorithm~\ref{algo.ADMM.gral} starting from $(\tilde x_0,\tilde u_0)$ is identical to the sequence
$\{(x_{k+1},y_{k+1},w_k-Ax_{k+1}), \; k =0,1,\dots\}$ generated by Algorithm~\ref{algo.DR.gral} starting from $w_0:=b-B\tilde y_0+\tilde u_0$.
\end{proposition}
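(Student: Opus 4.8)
The plan is to prove the claim by induction on $k$, showing that at every step the two algorithms solve literally the same optimization subproblems, so that under any fixed selection rule for the (possibly set-valued) $\argmin$'s applied consistently to both, the iterates coincide. The essential preparatory step is to rewrite the two ``augmented Lagrangian'' subproblems of Algorithm~\ref{algo.ADMM.gral} in the ``quadratic penalty'' form used by Algorithm~\ref{algo.DR.gral}, by completing the square. For the $x$-update this gives, for any $\tilde u, \tilde y$,
\[
f(x) - \ip{\tilde u}{Ax+B\tilde y-b} + \tfrac12\|Ax+B\tilde y-b\|^2 = f(x) + \tfrac12\|Ax-(b-B\tilde y+\tilde u)\|^2 - \tfrac12\|\tilde u\|^2,
\]
and an analogous identity holds for the $y$-update. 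Introducing the auxiliary quantity $\hat w_k := b - B\tilde y_k + \tilde u_k$, the ADMM recursion thus becomes $\tilde x_{k+1}\in\argmin_x\{f(x)+\tfrac12\|Ax-\hat w_k\|^2\}$, then $\tilde u_{k+1} = \tilde u_k - A\tilde x_{k+1} - B\tilde y_k + b = \hat w_k - A\tilde x_{k+1}$, then $\tilde y_{k+1}\in\argmin_y\{g(y)+\tfrac12\|By-(b-A\tilde x_{k+1}+\tilde u_{k+1})\|^2\}$, and finally $\hat w_{k+1} = b - B\tilde y_{k+1}+\tilde u_{k+1}$.

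With this reformulation the induction claim is: for all $k\ge 0$ one has $\hat w_k = w_k$, $\tilde x_{k+1}=x_{k+1}$, $\tilde u_{k+1}=w_k - Ax_{k+1}$, and $\tilde y_{k+1}=y_{k+1}$. The base case $\hat w_0 = w_0$ is exactly the prescribed initialization $w_0 := b - B\tilde y_0 + \tilde u_0$. For the inductive step, assume $\hat w_k = w_k$. Then the $f$-subproblems defining $\tilde x_{k+1}$ and $x_{k+1}$ are identical, so $\tilde x_{k+1}=x_{k+1}$; consequently $\tilde u_{k+1} = \hat w_k - A\tilde x_{k+1} = w_k - Ax_{k+1}$; substituting these, $b - A\tilde x_{k+1} + \tilde u_{k+1} = b - 2Ax_{k+1} + w_k$, so the $g$-subproblems defining $\tilde y_{k+1}$ and $y_{k+1}$ coincide and $\tilde y_{k+1}=y_{k+1}$; and finally $\hat w_{k+1} = b - B\tilde y_{k+1} + \tilde u_{k+1} = w_k - Ax_{k+1} - By_{k+1} + b = w_{k+1}$, which closes the induction. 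Since $w_k - Ax_{k+1}$ is precisely the quantity appearing in the statement, this establishes that the two sequences are identical.

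I do not expect a genuine obstacle here: the argument is essentially bookkeeping built on the completing-the-square identity. The two points that require a little care are (i) the one-step index lag in Algorithm~\ref{algo.ADMM.gral} between $\tilde y_k$ and the pair $(\tilde x_{k+1},\tilde u_{k+1})$ — this is exactly why the matched triple is $(\tilde x_{k+1},\tilde y_{k+1},\tilde u_{k+1})$ rather than a same-iteration triple, and the induction above is organized to respect this ordering — and (ii) the fact that the $\argmin$'s need not be single-valued, which is handled by observing that the two algorithms solve identical subproblems at each step, so their iterates can be coupled to be identical (and are identical whenever the subproblems have unique minimizers).
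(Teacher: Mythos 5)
Your proof is correct and follows essentially the same route as the paper's: both hinge on the completing-the-square identity relating the augmented-Lagrangian subproblems to the quadratic-penalty subproblems, together with the change of variable $w_k = b - By_k + u_k$ and an induction respecting the index lag in $\tilde y_k$. The only cosmetic difference is that you rewrite the ADMM updates into Douglas--Rachford form whereas the paper rewrites the Douglas--Rachford updates into ADMM form; your explicit remark about coupling the selections from possibly non-singleton $\argmin$ sets is a small point of extra care that the paper leaves implicit.
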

\begin{proof}
For ease of exposition, let $u_{k+1} := w_k - Ax_{k+1},\; k=0,1,\dots$.  The identity $w_{k+1} = w_k - Ax_{k+1} -By_{k+1} + b$ in Algorithm~\ref{algo.DR.gral} implies that $w_{k+1} = u_{k+1} -By_{k+1} +b$ for $k=0,1,\dots$.  Thus by letting $u_0:=\tilde u_0$ and $y_0:=\tilde y_0$ we have
$$w_k = u_k - By_k+b\; \text{ for } \; k=0,1,\dots.$$
Therefore after some
straightforward algebra, the update rules in Algorithm~\ref{algo.DR.gral} can be rewritten as
follows
\begin{align*}
x_{k+1}&=\argmin\left\{f(x) + \tfrac{1}{2}\|Ax - u_{k}+By_{k}-b\|^2 \right\}
\\[1ex]&= \argmin\left\{f(x) -\ip{u_{k}}{Ax+By_{k}-b}+ \tfrac{1}{2}\|Ax +By_{k}-b\|^2 \right\} \\[1ex]
u_{k+1} &= u_{k} -Ax_{k+1}-By_{k}+b\\[1ex]
y_{k+1} &= \argmin\left\{g(y) + \tfrac{1}{2}\|b - By- Ax_{k+1} + u_{k+1}\|^2 \right\}\\[1ex]
&=\argmin\left\{g(y) - \ip{u_{k+1}}{Ax_{k+1}+By-b}+ \tfrac{1}{2}\|Ax_{k+1}+By- b\|^2 \right\}.
\end{align*}
Since $u_0 = \tilde u_0$ and $y_0 = \tilde y_0$, it follows by induction and the update rules in Algorithm~\ref{algo.ADMM.gral} that for $k=0,1,\ldots$
\[\tilde x_{k+1} =  x_{k+1}, \; \tilde u_{k+1} = u_{k+1}, \; \tilde y_{k+1} = y_{k+1}.\]
\end{proof}

The linear convergence results from Section~\ref{sec:DRconv} have straightforward extensions to the context of this section.  More precisely, if both $f$ and $g$ in~\eqref{eq.primal.gral} are PLQ, then so are $\tilde f$ and $\tilde g$ as defined in~\eqref{eq.tilde.funcs}.  Thus
Theorem~\ref{thm:HpieceWiseLin} implies that the Douglas-Rachford operator~\eqref{eq.DR.gral} satisfies
the error bound condition~\eqref{eq.error.bound.F} on any set of the form $\{w\in \R^k:\dist(w,\barW)\le R\}$.

In analogous fashion,  Lemma~\ref{lemma.rel.strong} below
shows that if $f$ or $g$ is strongly convex then so is $\tilde f$ or $\tilde g$.   Lemma~\ref{lemma.rel.strong} also shows that if $f^*$ or $g^*$ is strongly convex then $\tilde f^*$ or $\tilde g^*$ is strongly convex relative to suitable subsets of their domains.  Therefore if $f$ or $g$  and $f^*$ or $g^*$ are strongly convex and in addition~\eqref{eq.primal.gral} and~\eqref{eq.dual.gral} have the same optimal values and they are both attained then Lemma~\ref{lemma.rel.strong} and~Theorem~\ref{thm.stronglyConv.bound} imply that
the Douglas-Rachford operator~\eqref{eq.DR.gral} satisfies
the error bound condition~\eqref{eq.error.bound.F} on $\R^k$.

\begin{lemma}\label{lemma.rel.strong} Suppose $h\in \Gamma_0(\R^n)$ and $A:\R^m\rightarrow \R^n$ is a non-zero linear mapping such that $\dom(h \circ A)\ne \emptyset.$

\begin{enumerate}[label = (\roman*)]
\item If $h$ is $L$-smooth for some $L > 0$ then $h\circ A$ is $L\cdot \|A\|^2$-smooth. \label{lemma.i}
\item If $h$ is $\mu$-strongly convex for some $\mu > 0$ then $h\circ A$ is $\mu\cdot
(\sigma_{\min}^+(A))^2$-strongly convex relative to $A^{-1}(X) \subseteq \dom(\partial (h\circ A))$ for any nonempty closed convex $X \subseteq \dom(\partial h)$ where $\sigma_{\min}^+(A)$ denotes the smallest positive singular value of $A$, that is,
\[
\sigma_{\min}^+(A):=\min\{\|Az\|: \|z\| =1 \text{ and } z \in \ker(A)^\perp \}.
\]
\label{lemma.ii}
\end{enumerate}
\end{lemma}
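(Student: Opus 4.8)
The plan is to treat the two parts separately, in each case transferring the curvature property from $h$ to $h\circ A$ by a change of variables together with the singular-value bounds $\|Az\|\le\|A\|\,\|z\|$ for all $z$ and $\|Az\|\ge\sigma_{\min}^+(A)\,\|z\|$ for $z\in\ker(A)^\perp$. For part~(i), note first that $L$-smoothness forces $\dom h=\R^n$ and $h\in C^{1,1}$, so $h\circ A$ is differentiable everywhere with $\nabla(h\circ A)(x)=A^*\nabla h(Ax)$ and $\graph(\partial(h\circ A))=\{(x,A^*\nabla h(Ax)):x\in\R^m\}$. Given two graph points, write them as $(x,A^*q)$ and $(x',A^*q')$ with $(Ax,q),(Ax',q')\in\graph(\partial h)$; then
\[
\ip{A^*q-A^*q'}{x-x'}=\ip{q-q'}{Ax-Ax'}\le L\|Ax-Ax'\|^2\le L\|A\|^2\|x-x'\|^2,
\]
where the first inequality is the smoothness characterization~\eqref{eq:strSmoth} for $h$. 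This is exactly~\eqref{eq:strSmoth} for $h\circ A$ with constant $L\|A\|^2$, proving~(i).

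For part~(ii), set $g:=h\circ A$ and $Z:=A^{-1}(X)$ (which I take to be nonempty, without which the conclusion is vacuous). I would record three facts. (a) $Z$ is closed and convex, and $Z=Z+\ker A$, since $Az\in X$ and $k\in\ker A$ imply $A(z+k)=Az\in X$; hence $y-\Pi_Z(y)\in\ker(A)^\perp$ for every $y$, because for $k\in\ker A$ the map $t\mapsto\|y-\Pi_Z(y)-tk\|^2$ attains its minimum at $t=0$, forcing $\ip{y-\Pi_Z(y)}{k}=0$. (b) $A^*\partial h(Az)\subseteq\partial g(z)$ for all $z$ (immediate from the subgradient inequality), so $Z\subseteq\dom(\partial g)$ because $X\subseteq\dom(\partial h)$. (c) For fixed $\bar y,y$ with $d:=y-\bar y$, the one-dimensional function $\psi(t):=g(\bar y+td)=h(A\bar y+t\,Ad)$ is $\mu\|Ad\|^2$-strongly convex on $\R$: indeed $\psi(t)-\tfrac{\mu\|Ad\|^2}{2}t^2$ equals the convex function $t\mapsto h(A\bar y+t\,Ad)-\tfrac{\mu}{2}\|A\bar y+t\,Ad\|^2$ plus an affine function of $t$. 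Now I verify~\eqref{eq.rel.strong.conv}: fix $(y,v)\in\graph(\partial g)$, set $\bar y:=\Pi_Z(y)$ and $d:=y-\bar y$, and take any $u\in\partial g(\bar y)$. Substituting $z=\bar y+td$ in the subgradient inequalities $g(z)\ge g(\bar y)+\ip u{z-\bar y}$ and $g(z)\ge g(y)+\ip v{z-y}$ shows $\ip u d\in\partial\psi(0)$ and $\ip v d\in\partial\psi(1)$, so~\eqref{eq:strConv} applied to the $\mu\|Ad\|^2$-strongly convex function $\psi$ gives $\ip v d-\ip u d\ge\mu\|Ad\|^2$, i.e.\ $\ip{v-u}{y-\bar y}\ge\mu\|A(y-\bar y)\|^2$. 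By fact~(a), $y-\bar y\in\ker(A)^\perp$, hence $\|A(y-\bar y)\|^2\ge(\sigma_{\min}^+(A))^2\|y-\bar y\|^2$, and~\eqref{eq.rel.strong.conv} follows with modulus $\mu(\sigma_{\min}^+(A))^2$.

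The main obstacle I anticipate is the subdifferential of $g=h\circ A$ in part~(ii): without a constraint qualification the chain rule $\partial(h\circ A)=A^*(\partial h\circ A)$ can fail, while the definition of relative strong convexity quantifies over all $u\in\partial g(\bar y)$, which may be strictly larger than $A^*\partial h(A\bar y)$. The one-dimensional reduction in fact~(c) is precisely what circumvents this, since it only uses the defining subgradient inequality for $g$ and never a representation of $\partial g$; everything else — closedness and convexity of $Z$, the projection-residual identity, and the scalar strong-convexity estimate — is routine.
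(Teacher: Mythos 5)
Your proposal is correct, and part~(i) coincides with the paper's argument (pull two graph points back through $A^*$, apply~\eqref{eq:strSmoth} for $h$, then bound $\|A(y-x)\|\le\|A\|\,\|y-x\|$; your preliminary remark that $L$-smoothness forces differentiability makes the chain rule unproblematic there). For part~(ii) you take a genuinely different route. The paper works directly with the set-valued chain rule: it writes every $v\in\partial(h\circ A)(y)$ as $A^*z$ with $z\in\partial h(Ay)$ and every $u\in\partial(h\circ A)(\bar y)$ as $A^*w$ with $w\in\partial h(A\bar y)$, then applies strong monotonicity of $\partial h$ at the pair $(A\bar y, Ay)$; this implicitly uses the identity $\partial(h\circ A)=A^*(\partial h\circ A)$, whose nontrivial inclusion $\partial(h\circ A)(y)\subseteq A^*\partial h(Ay)$ requires a qualification condition in general. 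Your one-dimensional reduction along the segment from $\Pi_{A^{-1}(X)}(y)$ to $y$ — showing $\ip{u}{d}\in\partial\psi(0)$ and $\ip{v}{d}\in\partial\psi(1)$ for the $\mu\|Ad\|^2$-strongly convex restriction $\psi(t)=h(A\bar y+tAd)$ — uses only the defining subgradient inequality for $h\circ A$ and the easy inclusion $A^*\partial h(A\bar y)\subseteq\partial(h\circ A)(\bar y)$ (needed just to know $\partial(h\circ A)(\bar y)\ne\emptyset$), so it handles arbitrary subgradients of the composition without any constraint qualification. The two proofs share the remaining ingredients: the observation that $y-\Pi_{A^{-1}(X)}(y)\in\ker(A)^\perp$ (your projection-residual computation is the same as the paper's normal-cone argument) and the bound $\|Ad\|\ge\sigma_{\min}^+(A)\|d\|$ on $\ker(A)^\perp$. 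In short, your version buys robustness at the cost of one extra scalar lemma; the paper's is shorter but leans on the exact subdifferential chain rule.
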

\begin{proof} Recall that $\partial (h\circ A)(x) = A^* \partial h(Ax)$ for all $x\in \dom (\partial (h\circ A))$.

\ref{lemma.i}
Suppose $(x,u), (y,v) \in \graph(\partial (h\circ A))$.  Then $u = A^*w$ and $v = A^*z$ for some $w\in \partial h(Ax)$ and $z\in \partial h(Ay)$. Since $h$ is $L$-smooth we have
\[
\ip{v-u}{y-x} = \ip{A^*z-A^*w}{y-x} = \ip{z-w}{Ay-Ax)} \le L\cdot \|A(y-x)\|^2 \le L \cdot \|A\|^2 \cdot \|y-x\|^2.
\]
Since this holds for all $(x,u), (y,v) \in \graph(\partial (h\circ A))$ it follows that $h\circ A$ is $L\cdot \|A\|^2$-smooth.

\ref{lemma.ii} Suppose $X \subseteq \dom(\partial h)$ is a  nonempty closed convex set and $(y,v) \in \graph(\partial (h\circ A))$.  Thus $v = A^*z$ for some $z\in \partial h(Ay)$. Let $\bar y:= \Pi_{A^{-1}(X)}(y)$. Since $h$ is $\mu$-strongly convex, for all $w \in \partial h(A\bar y)$ we have
\[
\ip{A^*z-A^*w}{y-\bar y} =  \ip{z-w}{Ay-A\bar y} \ge \mu\cdot\|A(y-\bar y)\|^2.
\]
Since $\bar y = \Pi_{A^{-1}(X)}(y)$, it follows that $y-\bar y \in N_{A^{-1}(X)}(\bar y)$.  In particular, $y-\bar y \in \ker(A)^\perp$ because $A(\bar y \pm z) = A\bar y \in X$ and thus $\bar y\pm z \in A^{-1}(X)$ for all $z\in \ker(A)$.  Therefore for all
$w \in \partial h(A\bar y)$ we have
\[
\ip{A^*z-A^*w}{y-\bar y} \ge \mu \cdot (\sigma_{\min}^+(A))^2 \cdot\|y-\bar y\|^2.
\]
Since this holds for all $w\in \partial h(A\bar y)$, and we have $v = A^*z$ and  $\partial (h\circ A)(\bar y) = A^*\partial h(A\bar y)$, it follows that
\[
\ip{v-u}{y-\bar y} \ge \mu \cdot (\sigma_{\min}^+(A))^2 \cdot\|y-\bar y\|^2
\]
for all $u\in \partial (h\circ A)(\bar y)$.  Therefore $h\circ A$ is $\mu\cdot
(\sigma_{\min}^+(A))^2$-strongly convex relative to $A^{-1}(X) \subseteq \dom(\partial (h\circ A))$.
\end{proof}
\section{Final Remarks.}
\label{sec:conclusions}
We establish the linear convergence of the Douglas-Rachford algorithm for the problem $\min_x \{f(x)+g(x)\}$ and its Fenchel dual $\max_{u} \{ -f^*(u) - g^*(-u)\}\Leftrightarrow \min_{u} \{ f^*(u) + g_*(u)\}$ for convex functions~$f$ and~$g$ (Theorem~\ref{thm.DR}) via a generic error bound condition~\eqref{eq.error.bound.F}.  We subsequently leverage this error bound condition to show and estimate the linear convergence of the Douglas-Rachford algorithm  in terms of some suitable measure of well-posedness of the primal and dual problems for three classes of problems.  The first class is when $f$ or $g$ {\em and} $f^*$ or $g_*$ are strongly convex relative to the optimal primal and dual sets  (Theorem~\ref{thm.stronglyConv.bound}).  The second class is when both $f$ and $g$, or equivalently both $f^*$ and $g_*$, are piecewise linear-quadratic (Theorem~\ref{thm:HpieceWiseLin}).  The third class is the conic feasibility problem when the functions $f$ and $g$ are the indicator functions of closed convex cones (Theorem~\ref{prop.error.gral}).  We also establish stronger finite termination properties when the cones are a linear subspace and the non-negative orthant (Theorem~\ref{thm.finite.feas}). Our results in the special conic feasibility problem naturally suggest two directions for future work.  We conjecture that the finite termination property also holds when~$f$ and~$g$ are the indicator functions of closed convex cones as long as the relative interiors of the cones have a nonempty intersection.  We also conjecture that the finite termination result for the case of a  linear subspace and the non-negative orthant can be strengthened by showing that the number of iterations until termination depends on some suitable condition measure of the problem.

\end{document}